\def\source{\mathtt{s}}
\def\target{\mathtt{t}}
\def\top{\mathtt{Top}}
\def\inrad{r_{\mathit{in}}}
\def\outrad{r_{\mathit{out}}}
\def\Re{\mathrm{Re}}
\newcommand{\floor}[1]{\left\lfloor {#1} \right\rfloor}
\newcommand{\old}[1]{}
\newtheorem{theorem}{Theorem}
\newtheorem{lemma}[theorem]{Lemma}
\newtheorem{corollary}[theorem]{Corollary}
\theoremstyle{remark}
\newtheorem*{remark}{Remark}
\theoremstyle{definition}
\newtheorem*{definition}{Definition}
\newcommand{\thmref}[1]{Theorem~\ref{thm:#1}}
\newcommand{\figref}[1]{Figure~\ref{fig:#1}}
\newcommand{\tabref}[1]{Table~\ref{tab:#1}}
\newcommand{\tabrefs}[2]{Tables~\ref{tab:#1} and~\ref{tab:#2}}
\newcommand{\Secref}[1]{\textsection\ref{sec:#1}}
\newcommand{\secref}[1]{\textsection\ref{sec:#1}}
\newcommand{\eq}[1]{equation~\eqref{eq:#1}}
\newcommand{\eqs}[2]{equations~\eqref{eq:#1} and~\eqref{eq:#2}}
\def\supp{\mbox{supp}}
\def\block{\mathop{block}}
\def\rnd{\mathop{rnd}}
\def\Binomial{\mbox{Binomial}}
\def\diff{\mbox{di{}f{}f}}
\def\avgdiff{\overline{\mbox{di{}f{}f}}}
\newcommand{\rotseq}[1]{\texttt{#1}}
\DeclareSymbolFont{AMSb}{U}{msb}{m}{n}
\DeclareMathSymbol{\C}{\mathbin}{AMSb}{"43}
\DeclareMathSymbol{\EE}{\mathbin}{AMSb}{"45}
\DeclareMathSymbol{\N}{\mathbin}{AMSb}{"4E}
\DeclareMathSymbol{\PP}{\mathbin}{AMSb}{"50}
\DeclareMathSymbol{\Q}{\mathbin}{AMSb}{"51}
\DeclareMathSymbol{\R}{\mathbin}{AMSb}{"52}
\DeclareMathSymbol{\Z}{\mathbin}{AMSb}{"5A}
\newcommand{\drawblob}[1]{
    %\tikzstyle{background grid}=[draw, black!50,step=.5cm]
    \scalebox{.72}{
    \begin{tikzpicture}%[show background grid]
        \node (full) at (0cm,0cm) [inner sep=0pt,above right]
            {\includegraphics[width=5cm]{#1}};
        \node (zoom) at (0.5cm,5.5cm) [thick,rectangle,draw,inner sep=2pt,above right]
            {\includegraphics[viewport=2.1in 4.6in 2.9in 4.98in,width=4cm,clip]{#1}};
        \draw[ultra thick, white] (2.1cm,4.6cm) rectangle (2.9cm,5cm);
        \draw[thick] (2.1cm,4.6cm) rectangle (2.9cm,5cm);
        \draw (2.1cm,4.98cm) -- (zoom.south west);
        \draw (2.9cm,4.98cm) -- (zoom.south east);
    \end{tikzpicture}
    }
}
\let\oldsqrt\sqrt
\def\DHLhksqrt#1#2{\setbox0=\hbox{$#1\oldsqrt{#2\,}$}\dimen0=\ht0
   \advance\dimen0-0.2\ht0
   \setbox2=\hbox{\vrule height\ht0 depth -\dimen0}%
   {\box0\lower0.4pt\box2}}
\renewcommand{\leq}{\leqslant}
\renewcommand{\geq}{\geqslant}
\renewcommand\epsilon\varepsilon
\def\now{\minute=\time \hour=\time \divide \hour by 60 \hourMins=\hour \multiply\hourMins by 60
  \advance\minute by -\hourMins \zeroPadTwo{\the\hour}:\zeroPadTwo{\the\minute}}
\def\today{\the\year-\zeroPadTwo{\the\month}-\zeroPadTwo{\the\day}}
\def\zeroPadTwo#1{\ifnum #1<10 0\fi #1}
\title[Fast Simulation of Large-Scale Growth Models]{Fast Simulation of Large-Scale Growth Models$^*$}
\thanks{$^*$ A conference version appeared in the
     15th International Workshop on Randomization and Computation (RANDOM 2011).
     The second author was partly supported by a National Science Foundation Postdoctoral Fellowship.}
\author{Tobias Friedrich$^1$}
\address{$^1$ Department 1: Algorithms and Complexity\\Max-Planck-Institut f\"ur Informatik\\66123 Saarbr\"ucken, Germany}
\author{Lionel Levine$^2$}
\address{$^2$ Department of Mathematics\\Massachusetts Institute of Technology\\Cambridge, MA 02139, USA}
\keywords{Cycle popping, internal diffusion limited aggregation, least action principle, low discrepancy random stack, odometer function, potential kernel, rotor-router model}
\subjclass[2010]{82C24, 05C81, 05C85}
\begin{document}

\maketitle

\begin{abstract}
We give an algorithm that computes the final state of certain growth models without computing all intermediate states.  Our technique is based on a ``least action principle" which characterizes the odometer function of the growth process.  Starting from an approximation for the odometer, we successively correct under- and overestimates and provably arrive at the correct final state.

Internal diffusion-limited aggregation (IDLA) is one of the models amenable to our technique.  The boundary fluctuations in IDLA were recently proved to be at most logarithmic in the size of the growth cluster, but the constant in front of the logarithm is still not known.  As an application of our method, we calculate the size of fluctuations over two orders of magnitude beyond previous simulations, and use the results to estimate this constant.
\end{abstract}

%%%%%%%%%%%%%%%%%%%%%%%%%%%%%%%%%%%%%%%%%%%%%%%%%%%%%%%%%%%%%%%%%%%%%%%%%%%%
%%%%%%%%%%%%%%%%%%%%%%%%%%%%%%%%%%%%%%%%%%%%%%%%%%%%%%%%%%%%%%%%%%%%%%%%%%%%

\section{Introduction}

In this paper we study the \emph{abelian stack model}, a type of growth process on graphs.
Special cases include \emph{internal diffusion limited aggregation} (IDLA) and \emph{rotor-router aggregation}.
%(cf.~\figref{big}).
We describe a method for computing the final state of the process, given an initial approximation.  The more accurate the approximation, the faster the computation.

\subsection*{IDLA}
Starting with~$N$ chips at the origin of the two-dimensional square grid $\Z^2$,
each chip in turn performs a simple random walk until reaching an unoccupied site.
Introduced by \citet{MD} and independently by \citet{DF}, IDLA models physical phenomena such as a
solid melting around a heat source,
electrochemical polishing,
and fluid flow in a Hele-Shaw cell.
\citet{LBG} showed that as $N \to \infty$, the asymptotic shape of the resulting cluster of~$N$ occupied sites is a disk (and in higher dimensions, a Euclidean ball).

The boundary of an IDLA cluster is a natural model of a random propagating front (\figref{big}, left).  From this perspective, the most basic question one could ask is, what is the scale of the fluctuations around the limiting circular shape?  Until recently this was a long-standing open problem in statistical physics. It is now known that the fluctuations in dimension $2$ are of order at most $\log N$ \cite{JLS1,AG2}; however, it is still an open problem to show that the fluctuations are at least this large.  We give numerical evidence that $\log N$ is in fact the correct order, and estimate the constant in front of the log.

\subsection*{Rotor-router aggregation}
James Propp~\citep{Kleber} proposed the following way of derandomizing IDLA.
At each lattice site in~$\Z^2$ is a \emph{rotor} that can point north, east, south or west.  Instead of stepping in a random direction, a chip rotates the rotor at its current location counterclockwise, and then steps in the direction of this rotor.  Each of~$N$ chips starting at the origin walks in this manner until reaching an unoccupied site.
Given the initial configuration of the rotors (which can be taken, for example, to be all north), the resulting growth process is entirely deterministic.
Regardless of the initial rotor configurations, the asymptotic shape is a disk (and in higher dimensions, a Euclidean ball) and the inner fluctuations are proved to be $O(\log N)$~\citep{LP09a}.  The true fluctuations appear to grow even more slowly, and may even be bounded independent of~$N$.
%The gap in the outer fluctuations is even larger.

Rotor-router aggregation is remarkable in that it generates a nearly perfect disk in the square lattice without any reference to the Euclidean norm $(x^2+y^2)^{1/2}$. Perhaps even more remarkable are the patterns formed by the final directions of the rotors (\figref{big}, right).

\begin{figure}[ptb]
    \scalebox{1.6}{
    \begin{tikzpicture}%[show background grid]
        \node (fullIDLA) at (2cm,0cm) [inner sep=0pt,above right]
            {\includegraphics[height=4cm]{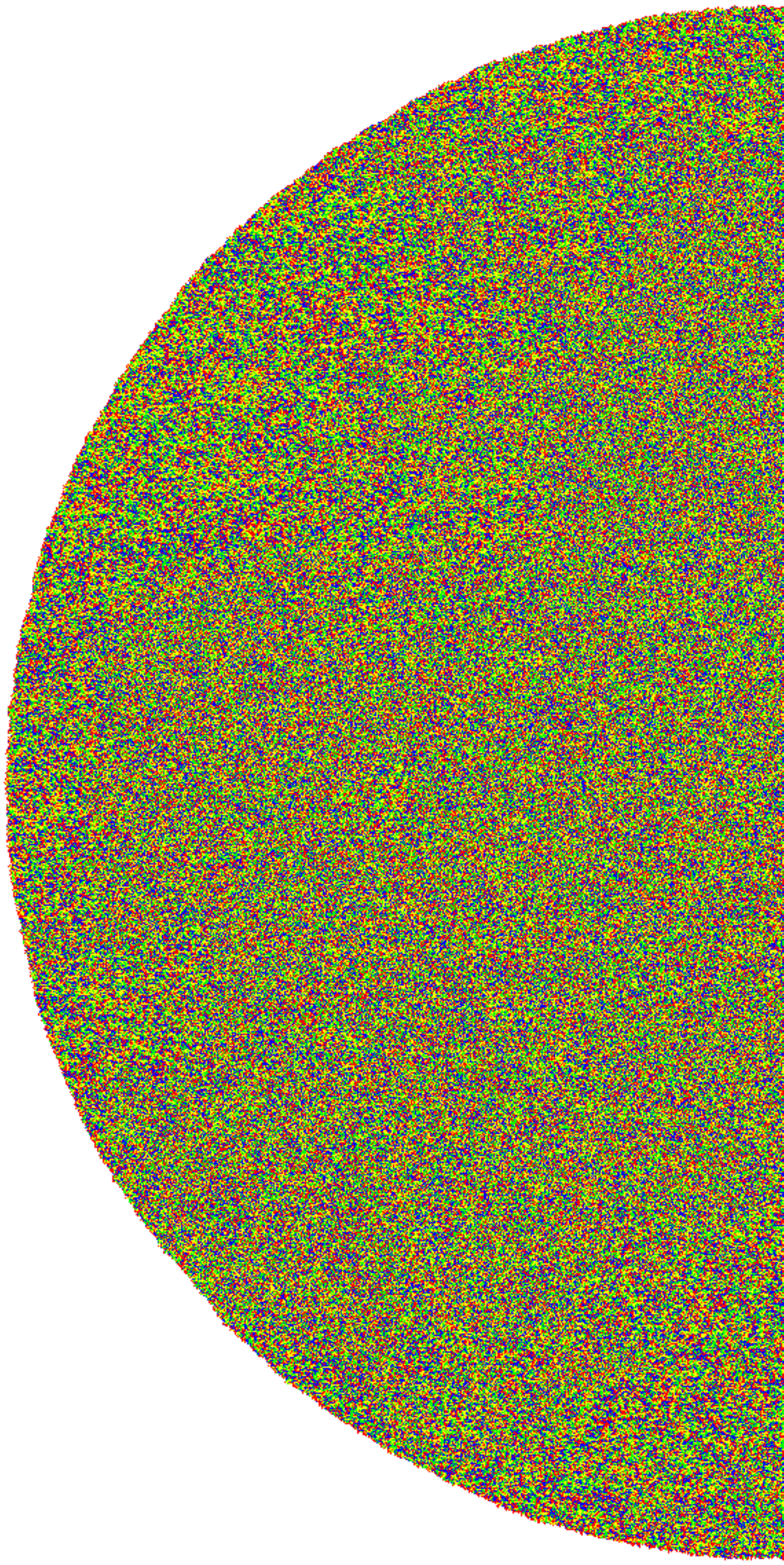}};
        \node (fullRR) at (3.9956cm,0cm) [inner sep=0pt,above right]
            {\includegraphics[height=4cm]{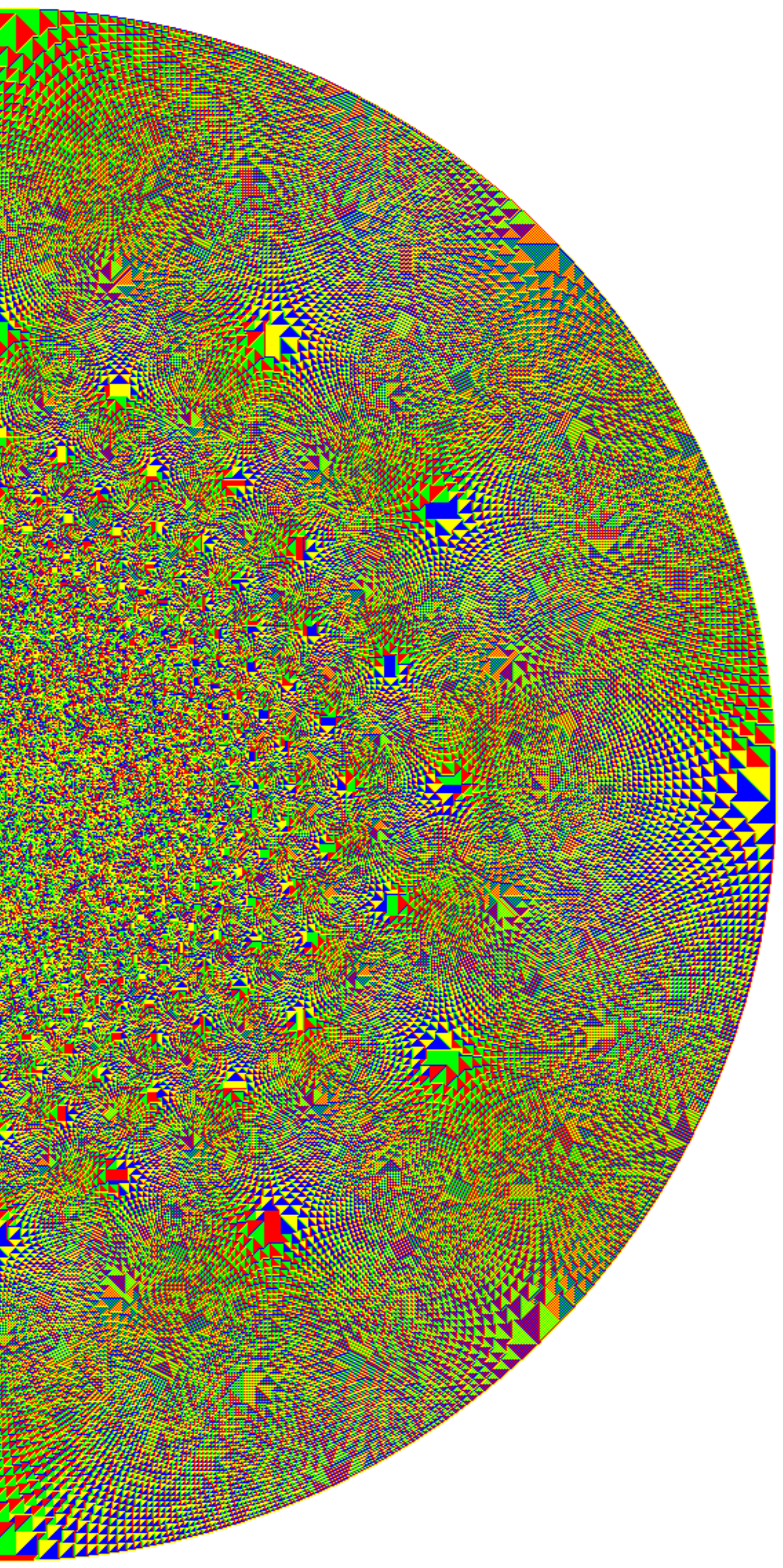}};
        \node (zoomIDLA) at (-.3cm,0.5cm) [thick,rectangle,draw,inner sep=1pt,above right]
            {\includegraphics[height=3cm]{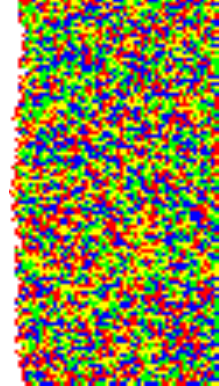}};
        \node (zoomRR) at (6.5cm,0.5cm) [thick,rectangle,draw,inner sep=1pt,above right]
            {\includegraphics[height=3cm]{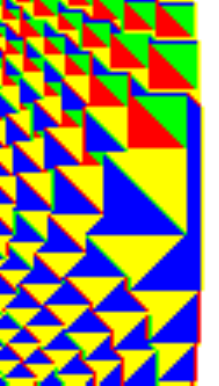}};

        \draw[thick,white] (5.765cm-0.0044cm,1.795cm) rectangle (6.01cm-0.0044cm+0.01cm,2.215cm);
        \draw[semithick] (5.765cm-0.0044cm,1.795cm) rectangle (6.01cm-0.0044cm+0.01cm,2.215cm);
        \draw (6.01cm-0.0044cm+0.01cm,2.215cm) -- (zoomRR.north west);
        \draw (6.01cm-0.0044cm+0.01cm,1.795cm) -- (zoomRR.south west);

        \draw[thick,white] (2.002cm-0.01cm,1.795cm) rectangle (2.247cm,2.215cm);
        \draw[semithick] (2.002cm-0.01cm,1.795cm) rectangle (2.247cm,2.215cm);
        \draw (2.002cm-0.01cm,2.215cm) -- (zoomIDLA.north east);
        \draw (2.002cm-0.01cm,1.795cm) -- (zoomIDLA.south east);
        \draw[thick,dashed] (4.003cm,-.05cm) -- (4.003cm,4.05cm);
    \end{tikzpicture}
    }

    \caption{IDLA cluster (left) and rotor-router cluster
        with counterclockwise rotor sequence (right) of $N=10^6$ chips.
        Half of each circular cluster is shown.
        Each site is colored according to the final direction of the rotor on top of its stack
            (yellow=\rotseq{W},
             red=\rotseq{S},
             blue=\rotseq{E},
             green=\rotseq{N}).
        Note that the boundary of the rotor-router cluster is much smoother than the boundary of the IDLA cluster.
        Larger rotor-router clusters of size up to $N=10^{10}$
        can be found at~\cite{hugerotor}.
             }
%    \vspace*{-\baselineskip}
    \label{fig:big}
\end{figure}

\subsection*{Low-discrepancy random stack}
%\NOTE{T}{upper or lowercase titles?}
%\NOTE{L}{I went with lowercase}
To better understand whether it is the regularity or the determinism which
makes rotor-router aggregation so round, we follow a suggestion
of James Propp and simulate a third model, \emph{low-discrepancy random stack},
which combines the randomness of IDLA and the regularity of the rotor-router
model.

\subsection*{Computing the odometer function}
The central tool in our analysis of all three models is the \emph{odometer function}, which measures the number of chips emitted from each site.
  The odometer function determines the shape of the final occupied cluster via a nonlinear operator that we call the \emph{stack Laplacian}.  Our main technical contribution is that even for highly non-deterministic models such as IDLA, one can achieve
\emph{fast exact calculation via intermediate approximation}.
Approximating our three growth processes
by an idealized model called the divisible sandpile, we can use the known
asymptotic expansion of the potential kernel of random walk on~$\Z^2$ to obtain an initial approximation of the odometer function.
We present a method for carrying out subsequent local corrections to provably transform this approximation into the exact odometer function, and hence compute the shape of the occupied cluster.
Our runtime depends strongly on the accuracy of the initial approximation.

\subsection*{Applications}
Traditional step-by-step simulation of all aforementioned models in $\Z^2$ requires a runtime of order $N^2$
to compute the occupied cluster.  
Using our new algorithm, we are able to generate large clusters faster:
our observed runtimes are about $N \log N$ for the rotor-router model % (\tabref{classic}),
and about $N^{1.5}$ for IDLA. % (\tabref{IDLA}).
By generating many independent
IDLA clusters, we estimate the order of fluctuations from circularity over two orders
of magnitude beyond previous simulations.  Our data strongly support
the findings of~\cite{MM} that the order of the maximum fluctuation for IDLA in $\Z^2$ is
logarithmic in~$N$.  Two proofs of an upper bound $C \log N$ on the maximum fluctuation for IDLA in $\Z^2$ have recently been announced: see \cite{AG1,AG2} and \cite{JLS1}.  While the implied constant $C$ in these bounds is large, our simulations suggest that the maximum fluctuation is only about $0.528 \ln N$.

For rotor-router aggregation we
achieve four orders of magnitude beyond previous simulations, which has
enabled us to generate fine-scaled examples of
the intricate patterns that form in the rotors on the tops of the stacks
at the end of the aggregation process (\figref{big}, right).
These patterns remain poorly understood even on a heuristic level.
We have used our algorithm
to generate a four-color $10$-gigapixel image~\cite{hugerotor} of the final
rotors for $N=10^{10}$ chips.  This file is so large that we had to use a Google
maps overlay to allow the user to zoom and scroll through the image.  Indeed,
the degree of speedup in our method was so dramatic that memory, rather than
time, became the limiting factor.

\subsection*{Related Work}

Unlike in a random walk, in a rotor-router walk each vertex serves its neighbors in a fixed order.  The resulting walk, which is completely deterministic, nevertheless closely
resembles a random walk in several
respects~\citep{CooperSpencer,DoerrF09,CooperDST07,CooperDFS10,HP,EJC1}.
%While the work cited above and ours primarily aims at understanding random processes and their deterministic counterparts from a foundations perspective,
The rotor-router mechanism also leads to improvements in algorithmic applications.
Examples include 
autonomous agents patrolling a territory~\cite{WLB96}, 
external mergesort~\citep{BarveGV97},
broadcasting information in networks~\cite{DFS08,DFS09},
and iterative load-balancing~\cite{FGS10}.

Abelian stacks (defined in the next section) are a way of indexing the steps of a walk by location and time rather than by time alone.  This fruitful idea goes back at least to~\citet[\textsection 4]{DF}.  \citet{Wilson} (see also~\cite{PW}) used this stack-based view of random walk in his algorithm for sampling a random spanning tree of a directed graph.  The final cycle-popping phase of our algorithm is directly inspired by Wilson's algorithm.
Our serial algorithm for IDLA also draws on ideas from the parallel algorithm of \citet{MM}.

Abelian stacks are a special case of \emph{abelian networks} \cite{DharADP, Lev11}, also called ``abelian distributed processors.''   In this viewpoint, each vertex is a finite automaton, or ``processor.''
%The stack of rotors represents the current internal state of the processor.
The chips are called ``messages.''  When a processor receives a message, it can change internal state and also send one or more messages to neighboring processors according to its current internal state.
%Note that in the abelian stack model, a processor sends at most one message for every message it receives.
% more importantly, there is only one ``type'' of message: chips are indistinguishable!
We believe that it might be possible to extend our method to other types of abelian networks, such as the Bak-Tang-Wiesenfeld sandpile model~\cite{BTW}.  Indeed, the initial inspiration for our work was the ``least action principle'' for sandpiles described in~\cite{FLP}.

%\medskip

\subsection*{Organization of the paper}
%\NOTE{T.}{we needed a subsection title here.  Also, I think this should be at the end of Sec.~1}
%\NOTE{L}{okay}
After formally defining the abelian stack model in \secref{formal}, we describe the mathematics underlying our algorithm in \secref{leastaction}.  The main result of \secref{leastaction} is \thmref{odomcharacterization}, which uniquely characterizes the odometer function by a few simple properties.  In \secref{thealgorithm} we describe the algorithm itself, and use \thmref{odomcharacterization} to prove its correctness.  \Secref{approxodo} discusses how to find a good approximation function to use as input to the algorithm.  Finally, \secref{models} describes our implementation and experimental results.

%%%%%%%%%%%%%%%%%%%%%%%%%%%%%%%%%%%%%%%%%%%%%%%%%%%%%%%%%%%%%%%%%%%%%%%%%%%%

\section{Formal Model}
\label{sec:formal}

The underlying graph for the abelian stack model can be any finite or infinite directed graph~$G=(V,E)$.  Each edge $e \in E$ is oriented from its source vertex $\source(e)$ to its target vertex $\target(e)$.  Self-loops (edges $e$ such that $\source(e)=\target(e)$) and multiple edges (distinct edges $e,e'$ such that $\source(e)=\source(e')$ and $\target(e)=\target(e')$) are permitted.  We assume that~$G$ is \emph{locally finite} --- each vertex is incident to finitely many edges --- and \emph{strongly connected}: for any two vertices $x,y \in V$ there are directed paths from $x$ to $y$ and from $y$ to $x$.
% do we need locally finite?
At each vertex~$x\in V$ is an infinite stack of \emph{rotors} $(\rho_n(x))_{n \geq 0}$.
Each rotor $\rho_n(x)$ is an edge of~$G$ emanating from~$x$, that is, $\source(\rho_n(x))=x$.  We say that rotor $\rho_0(x)$ is ``on top'' of the stack.

A finite number of indistinguishable chips are dispersed on the vertices of~$G$ according to some prescribed initial configuration.
%After describing the model and our algorithm in general, we focus on the example of the two-dimensional square grid $G=\Z^2$ with the usual nearest-neighbor adjacencies.    In our example on~$\Z^2$, the initial configuration consists of~$N$ chips at the origin, for very large $N$.
For each vertex~$x$, the first chip to visit~$x$ is absorbed there and never moves again.  Each subsequent chip arriving at~$x$ first shifts the stack at $x$ so that the new stack is $(\rho_{n+1}(x))_{n \geq 0}$.
%pops the top rotor~$e=(x,y)$ off the stack,
After shifting the stack, the chip moves from $x$ to the other endpoint~$y = \target(\rho_1(x))$ of the rotor now on top.  We call this two-step procedure (shifting the stack and moving a chip) \emph{firing} the site~$x$.  The effect of this rule is that the $n$th time a chip is emitted from~$x$, it travels along the edge $\rho_n(x)$.

We will generally assume that the stacks are \emph{infinitive}: for each edge $e$, infinitely many rotors $\rho_n(\source(e))$ are equal to $e$.  If~$G$ is infinite, or if the total number of chips is at most the number of vertices, then this condition ensures that firing eventually stops, and all chips are absorbed.

We are interested in the set
%$A_N$
of \emph{occupied sites}, that is, sites that absorb a chip.  The \emph{abelian property}~\cite[Theorem~4.1]{DF} asserts that this set does not depend on the order in which vertices are fired.  This property plays a key role in our method; we discuss it further in \secref{leastaction}.

\medskip

If the rotors $\rho_n(x)$ are independent and identically distributed random edges $e$ such that $\source(e)=x$, then we obtain IDLA.  For instance, in the case $G=\Z^2$, we can take the rotors $\rho_n(x)$ to be independent with the uniform distribution on the set of $4$ edges joining $x$ to its nearest neighbors $x \pm \mathbf{e}_1, x \pm \mathbf{e}_2$.  The special case of IDLA in which all chips start at a fixed vertex~$o$ is more commonly described as follows.
%Fix a vertex~$o$ and call it the origin.
Let $A_1=\{o\}$, and for $N \geq 2$ define a random set~$A_N$ of~$N$ vertices of~$G$ according to the recursive rule
	\begin{equation} \label{growthrule} A_{N+1} = A_{N} \cup \{x_N\} \end{equation}
where $x_N$ is the
%first site not in $A_N$ visited by a random walk started at $o$.
endpoint of a random walk
%independent of~$A_N$
started at~$o$ and stopped when it first visits a site not in~$A_N$.
These random walks describe one particular sequence in which the vertices can be fired, for the initial configuration of~$N$ chips at~$o$.  The first chip is absorbed at $o$, and subsequent chips are absorbed in turn at sites $x_1,\ldots,x_{N-1}$.  When firing stops, the set of occupied sites is~$A_N$.

A second interesting case is deterministic: the sequence $\rho_n(x)$ is periodic in~$n$, for every vertex~$x$.  For example, on~$\Z^2$, we could take the top rotor in each stack to point to the northward neighbor, the next to the eastward neighbor, and so on.
This choice yields the model of rotor-router aggregation defined by Propp~\cite{Kleber} and analyzed in~\cite{LP08,LP09a}.  It is described by the growth rule~\eqref{growthrule}, where $x_N$ is the endpoint of a rotor-router walk started at the origin and stopped on first exiting $A_N$.

%%%%%%%%%%%%%%%%%%%%%%%%%%%%%%%%%%%%%%%%%%%%%%%%%%%%%%%%%%%%%%%%%%%%%%%%%%%%
%%%%%%%%%%%%%%%%%%%%%%%%%%%%%%%%%%%%%%%%%%%%%%%%%%%%%%%%%%%%%%%%%%%%%%%%%%%%

\section{Least Action Principle}
\label{sec:leastaction}

A \emph{rotor configuration} on~$G$ is a function
\[
    r \colon V \rightarrow E
\]
such that $\source(r(v))=v$ for all $v\in V$.  A \emph{chip configuration} on~$G$ is a function
\[
    \sigma \colon V \rightarrow \Z
\]
with finite support.  Note we do not require $\sigma \geq 0$.  If $\sigma(x)=m>0$, we say there are $m$ chips at vertex $x$; if $\sigma(x)=-m<0$, we say there is a hole of depth $m$ at vertex $x$.

For an edge $e$ and a nonnegative integer $n$, let
\begin{equation}
    \label{eq:rotorfrequencies}
    R_\rho(e,n) = \# \{ 1\leq k \leq n \mid \rho_k(\source(e))=e \}
\end{equation}
be the number of times $e$ occurs among the first $n$ rotors in the stack at the vertex~$\source(e)$ (excluding the top rotor $\rho_0(\source(e))$).  When no ambiguity would result, we drop the subscript $\rho$.

Write $\N$ for the set of nonnegative integers.  Given a function $u\colon V\to \N$, we would like to describe the net effect on chips resulting from firing each vertex $x \in V$ a total of~$u(x)$ times.   In the course of these firings, each vertex $x$ emits $u(x)$ chips, and receives $R_\rho(e,u(\source(e)))$ chips along each incoming edge $e$ with $\target(e)=x$.  This motivates the following definition.

\begin{definition}
The \emph{stack Laplacian} of a function
	$ u \colon V \to \N $
is the function
\[
    \Delta_\rho u \colon V \to \Z
\]
given by
	\begin{equation}
	\label{eq:stacklaplacian}
    	\Delta_\rho u (x) = \sum_{\target(e)=x} R_\rho(e, u(\source(e))) - u(x).
	\end{equation}
% this is really the "adjoint" of the stack Laplacian
The sum is over all edges~$e$ with target vertex $\target(e)=x$.  We use the notation~$\Delta_\rho$ to emphasize the dependence (via~$R_\rho$) on the rotor stacks~$(\rho_k(x))_{k \geq 0}$.
\end{definition}

Given an initial chip configuration~$\sigma_0$, the configuration~$\sigma$ resulting from performing $u(x)$ firings at each site $x \in V$ is given by
\begin{equation}
\label{eq:basicodomrelation}
    \sigma = \sigma_0 + \Delta_\rho u.
\end{equation}
The rotor configuration on the tops of the stacks after these firings is also easy to describe.  We denote this configuration by $\top_\rho(u)$, and it is given by
\[
    \top_\rho(u)(x) = \rho_{u(x)}(x).
\]
We also write $E^u \rho$ for the collection of shifted stacks:
	\[ (E^u \rho)_k(x) = \rho_{u(x)+k}(x). \]
The stack Laplacian is not a linear operator, but it satisfies the relation
	\begin{equation} \label{skew-linearity} \Delta_\rho(u+v) = \Delta_\rho u + \Delta_{E^u \rho} v. \end{equation}

\begin{figure}
  \begin{center}
    \begin{tikzpicture}[shorten >=1pt,auto,thick,
    every node/.style={text centered,font=\normalsize,anchor=center}]
    \node[left] at (0,3.5) {$\rho_0\colon$};
    \node[left] at (0,3) {$\rho_1\colon$};
    \node[left] at (0,2.5) {$\rho_2\colon$};
    \node[left] at (0,2) {$\rho_3\colon$};
    
    \node[left,red] at (0,1) {$u\colon$};
    \node[left] at (0,.5) {$\Delta_\rho u\colon$};
    \node[left] at (0,0) {$\top_\rho(u)\colon$};

    \node at (0.3,4.3) {$v_1$};
    \node at (1.0,4.3) {$v_2$};
    \node at (1.7,4.3) {$v_3$};
    \node at (2.4,4.3) {$v_4$};
    \node at (3.1,4.3) {$v_5$};
    
    \draw[very thick] (0,4) -- (3.4,4);
    \fill[black] (0.3,4) circle (.1);
    \fill[black] (1.0,4) circle (.1);
    \fill[black] (1.7,4) circle (.1);
    \fill[black] (2.4,4) circle (.1);
    \fill[black] (3.1,4) circle (.1);
    
    \draw[->, blue] (0.5,3.5) -- (0.1,3.5);
    \draw[->] (0.8,3.5) -> (1.2,3.5);
    \draw[->] (1.9,3.5) -> (1.5,3.5);
    \draw[->] (2.2,3.5) -> (2.6,3.5);
    \draw[->, blue] (3.3,3.5) -> (2.9,3.5);
    
    \draw[->] (0.5,3) -> (0.1,3);
    \draw[->] (0.8,3) -> (1.2,3);
    \draw[->] (1.5,3) -> (1.9,3);
    \draw[->, blue] (2.6,3) -> (2.2,3);
    \draw[->] (3.3,3) -> (2.9,3);
    
    \draw[->] (0.1,2.5) -> (0.5,2.5);
    \draw[->, blue] (1.2,2.5) -> (0.8,2.5);
    \draw[->, blue] (1.5,2.5) -> (1.9,2.5);
    \draw[->] (2.2,2.5) -> (2.6,2.5);
    \draw[->] (2.9,2.5) -> (3.3,2.5);
    
    \draw[->] (0.1,2) -> (0.5,2);
    \draw[->] (1.2,2) -> (0.8,2);
    \draw[->] (1.9,2) -> (1.5,2);
    \draw[->] (2.6,2) -> (2.2,2);
    \draw[->] (2.9,2) -> (3.3,2);

    \draw[red] (0,3.3) -- (3.4,3.3);
    \draw[red] (0,3.2) -- (0.65,3.2) -- (0.65,2.3) -- (2.05,2.3) -- (2.05,2.8) -- (2.75,2.8) -- (2.75,3.2) -- (3.4,3.2);
    
    \node[red] at (0.3,1) {$0$};
    \node[red] at (1.0,1) {$2$};
    \node[red] at (1.7,1) {$2$};
    \node[red] at (2.4,1) {$1$};
    \node[red] at (3.1,1) {$0$};

    \node at (0.3,.5) {$1$};
    \node at (1.0,.5) {$-2$};
    \node at (1.7,.5) {$0$};
    \node at (2.4,.5) {$1$};
    \node at (3.1,.5) {$0$};
    
    \draw[->,blue] (0.5,0) -> (0.1,0);
    \draw[->,blue] (1.2,0) -> (0.8,0);
    \draw[->,blue] (1.5,0) -> (1.9,0);
    \draw[->,blue] (2.6,0) -> (2.2,0);
    \draw[->,blue] (3.3,0) -> (2.9,0);
    
    \end{tikzpicture}
  \end{center}
  \caption{An example of rotor stacks $\rho$ and the stack Laplacian $\Delta_\rho u$.  Here, the underlying graph is a path of length 5. For instance, the
middle vertex $v_3$ has $u(v_3)=2$ and $\Delta_\rho u(v_3) = 1+1-2
=0$, since each of its neighbors $v_2$ and $v_4$ has one rotor between
the red lines pointing to $v_3$.}
\label{fig:example}
\end{figure}
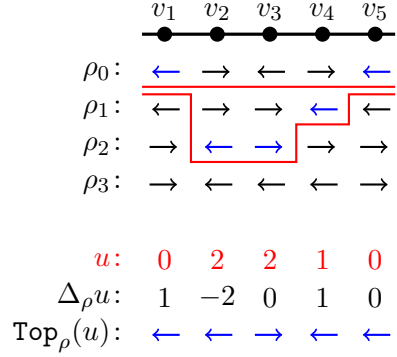

Vertices $x_1, \ldots, x_m$ form a \emph{legal firing sequence} for  $\sigma_0$ if
	\begin{equation*}
	\sigma_j(x_{j+1}) >1, \qquad j=0,\ldots,m-1
	\end{equation*}
where
	\[ \sigma_j = \sigma_0 + \Delta_\rho u_j \]
and
	\[ u_j(x) = \#\{i \leq j \colon x_i=x\}. \]
\figref{example} shows an example of rotor stacks and the stack Laplacian.

In words, the condition $\sigma_j(x_{j+1}) >1$ says that
%when vertex $x_{j+1}$ fires,
after firing $x_1,\ldots,x_j$, the vertex $x_{j+1}$ has at least two chips.  We require at least two because in our growth model, the first chip to visit each vertex gets absorbed.

The firing sequence
%$x_1,\ldots,x_m$
is \emph{complete} if no further legal firings are possible; that is, $\sigma_m(x) \leq 1$ for all $x\in V$.  If $x_1,\ldots,x_m$ is a complete legal firing sequence for the chip configuration~$\sigma_0$, then we call the function $u:=u_m$ the \emph{odometer} of~$\sigma_0$.  The odometer tells us how many times each site fires.
\\ ~ \\
\textbf{Abelian Property} \cite[Theorem~4.1]{DF} Given an initial configuration~$\sigma_0$ and stacks~$\rho$, every complete legal firing sequence for $\sigma_0$ has the same odometer function~$u$.
\\ ~ \\
It follows that the final chip configuration $\sigma_m = \sigma_0 + \Delta_\rho u$ and the final rotor configuration~$\top_\rho(u)$ do not depend on the choice of complete legal firing sequence.

\begin{remark}
To ensure that $u$ is well-defined (i.e., that there exists a finite complete legal firing sequence) it is common to place some minimal assumptions on $\rho$ and $\sigma_0$.  For example, if $G$ is infinite and strongly connected, then it suffices to assume that the stacks $\rho$ are infinitive.
\end{remark}

Given a chip configuration~$\sigma_0$ and rotor stacks $(\rho_k(x))_{k \geq 0}$,
our goal is to compute  the final chip configuration~$\sigma_m$ without performing individual firings one at a time.  A fundamental observation is that by \eq{basicodomrelation}, it suffices to compute the odometer function~$u$ of~$\sigma_0$.  Indeed, once we know that each site $x$ fires $u(x)$ times, we can add up the number of chips $x$ receives from each of its neighbors and subtract the $u(x)$ chips it emits to figure out the final number of chips at $x$.  This arithmetic is accomplished by the term $\Delta_\rho u$ in \eq{basicodomrelation}; see Figure~\ref{fig:example} for an example.  In practice, it is usually easy to compute~$\Delta_\rho u$ given~$u$, an issue we address in \secref{thealgorithm}.

Our approach will be to start from an approximation of~$u$ and correct errors.  In order to know when our algorithm is finished, the key mathematical point is to find a list of properties of~$u$ that characterize it uniquely.  Our main result in this section, \thmref{odomcharacterization}, gives such a list.  As we now explain, the hypotheses of this theorem can all be guessed from certain necessary features of the final chip configuration~$\sigma_m$ and the final rotor configuration~$\top_\rho(u) $.  What is perhaps surprising is that these few properties suffice to characterize~$u$.

Let $x_1,\ldots,x_m$ be a complete legal firing sequence for the chip configuration~$\sigma_0$.  We start with the observation that since no further legal firings are possible, \begin{itemize}
\item $\sigma_m(x) \leq 1$ for all $x\in V$.
\end{itemize}
Next, consider the set $A$ of sites that fire, which is the support of $u$:
	\[ A = \supp(u) := \{x \in V \colon u(x)>0\}. \]
Since each site that fires must first absorb a chip, we have
\begin{itemize}
\item $\sigma_m(x)=1$ for all $x \in A$.
\end{itemize}
%Let $A \subset V$ be a set of vertices.  A rotor configuration~$r$ is \emph{acyclic} on~$A$ if the spanning subgraph $(V,r(A))$ has no directed cycles.
% alternatively (assuming A finite): for any A' \subset A there exists x\in A' with \rho(x) \notin A'
Finally, observe that for any vertex $x\in A$, the rotor $r(x) = \top_\rho(u)(x)$ at the top of the stack at~$x$ is the edge traversed by the last chip fired from $x$.  The last chip fired from a given finite subset $A'$ of $A$ must be to a vertex outside of $A'$, so $A'$ must have a vertex whose top rotor points outside of $A'$.
\begin{itemize}
\item For any finite set $A' \subset A$, there exists $x\in A'$ with $\target(r(x)) \notin A'$.
\end{itemize}
We can state this last condition more succinctly by saying that the rotor configuration $r=\top_\rho(u)$ is \emph{acyclic} on~$A$; that is, the spanning subgraph $(V,r(A))$ has no directed cycles.  Here $r(A) = \{r(x) \mid x \in A\}$.

\begin{theorem}
\label{thm:odomcharacterization}
Let $G$ be a finite or infinite directed graph, $\rho$ a collection of rotor stacks on $G$, and $\sigma_0$ a chip configuration on~$G$.  Fix $u_* \colon V \to \N$, and let $A_* = \mathrm{supp}(u_*)$.  Let $\sigma_* = \sigma_0 + \Delta_\rho u_*$, and suppose that
	\begin{itemize}
	\item $\sigma_* \leq 1$;
	\item $A_*$ is finite;
%	\item $\supp(u_1) \subset \supp (\sigma_1)$
%	\item $(\sigma_1 -1)u_1 = 0$
%	\item for all $x \in V$, either $\sigma_1(x)=1$ or $u_1(x)=0$.
	\item $\sigma_*(x)=1$ for all $x \in A_*$; and
	\item $\top_\rho(u_*)$ is acyclic on $A_*$.
	\end{itemize}
Then there exists a finite complete legal firing sequence for $\sigma_0$, and its odometer function is $u_*$.
\end{theorem}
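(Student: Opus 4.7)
The plan is to produce a finite complete legal firing sequence for $\sigma_0$ whose odometer equals $u_*$. Specifically, I take any maximal legal firing sequence and show its odometer must be $u_*$. The argument splits into two claims: (a) every legal firing sequence $x_1,\ldots,x_j$ for $\sigma_0$ satisfies $u_j \leq u_*$ pointwise, which bounds its length by $\sum_x u_*(x) < \infty$; and (b) if the sequence is maximal and $u$ denotes its odometer, then $u = u_*$.

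For (a) I argue by induction on $j$. Suppose $u_j \leq u_*$ but the legal firing at $y = x_{j+1}$ produces $u_{j+1}(y) > u_*(y)$; then necessarily $u_j(y) = u_*(y)$. Comparing $\sigma_j(y) = \sigma_0(y) + \Delta_\rho u_j(y)$ with $\sigma_*(y) = \sigma_0(y) + \Delta_\rho u_*(y)$, the $-u(y)$ terms of the stack Laplacian cancel, and since $R_\rho(e,\cdot)$ is nondecreasing and $u_j \leq u_*$, every remaining difference is nonpositive. Hence $\sigma_j(y) \leq \sigma_*(y) \leq 1$, contradicting the legality requirement $\sigma_j(y) \geq 2$. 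Finiteness of $A_*$ and of each $u_*(x)$ then guarantees that maximal (hence complete) legal firing sequences exist and are finite.

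The main work is (b). Assume for contradiction that $B := \{x \colon u(x) < u_*(x)\}$ is nonempty; then $B \subseteq A_*$ and $B$ is finite. Summing the identity $\sigma_m(y) - \sigma_*(y) = \Delta_\rho u(y) - \Delta_\rho u_*(y)$ over $y \in B$, the contributions from edges $e$ with $\source(e) \notin B$ cancel (since $u$ and $u_*$ agree there), and bookkeeping collapses the right-hand side to
\begin{equation*}
\sum_{y \in B} \#\bigl\{ k \colon u(y) < k \leq u_*(y),\ \target(\rho_k(y)) \notin B \bigr\}.
\end{equation*}
Applying the acyclicity hypothesis to the finite set $B \subseteq A_*$ yields $y_0 \in B$ with $\target(\top_\rho(u_*)(y_0)) = \target(\rho_{u_*(y_0)}(y_0)) \notin B$, so taking $k = u_*(y_0)$ shows this sum is at least $1$. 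On the other hand, the left-hand side equals $\sum_{y \in B}(\sigma_m(y) - 1) \leq 0$, because the firing sequence is complete (so $\sigma_m \leq 1$) and $\sigma_*(y) = 1$ for $y \in B \subseteq A_*$ by hypothesis. This contradiction forces $B = \emptyset$, i.e.\ $u = u_*$.

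The main obstacle is the bookkeeping in (b): one must recognize that $\sum_{y \in B}[\Delta_\rho u(y) - \Delta_\rho u_*(y)]$ equals a count of exactly those stack positions $k$, with $u(y) < k \leq u_*(y)$, whose rotor points out of $B$. Only after this simplification can the acyclicity hypothesis be brought to bear, since it guarantees only a single outgoing rotor, located at the very top of the stack at some $y_0 \in B$. Step (a), together with finiteness of $A_*$, handles termination and the existence of a complete legal firing sequence.
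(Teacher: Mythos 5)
Your proof is correct and follows essentially the same route as the paper: the upper bound $u \leq u_*$ is the least action principle (the paper's Lemma 2, which you prove by the same monotonicity-of-$R_\rho$ argument phrased as an induction), and the lower bound is the same acyclicity-based counting over the deficient set, arriving at the same contradiction between $\sum_{y\in B}(\sigma_*(y)-\sigma_m(y))\geq 0$ and the count of rotors exiting $B$. The only difference is cosmetic: by invoking part (a) first you may take $u\leq u_*$ as given, which lets you avoid the paper's $m=\min(u,u_*)$ and shifted-stack bookkeeping in the second half.
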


A useful mnemonic for Theorem~\ref{thm:odomcharacterization} is ``no hills, no holes, no cycles.''  A \emph{hill} is a site $x$ with $\sigma_*(x)>1$, and a \emph{hole} is a site $x$ with $\sigma_*(x) < 1_{x \in A_*}$.  Hills are forbidden everywhere ($\sigma_*(x)  \leq 1$ for all $x$), but it suffices to forbid holes and cycles only on $A_*$.

We break the proof of Theorem~\ref{thm:odomcharacterization} into two inequalities.  The first inequality can be seen as an analogue for the abelian stack model of the least action principle for sandpiles~\cite[Lemma~2.3]{FLP}.  
% This is the final numbering for \cite{FLP}.

\begin{lemma}
\label{LAP}
(Least Action Principle)
If $\sigma_* \leq 1$ and $A_*$ is finite, then there exists a finite complete legal firing sequence for $\sigma_0$; and $u_* \geq u$, where $u$ is the odometer function of $\sigma_0$.
\end{lemma}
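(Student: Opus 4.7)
The plan is to proceed by induction on the length of a legal firing sequence, using the skew-linearity relation \eqref{skew-linearity} to compare any partial odometer $u_j$ against the candidate $u_*$. The induction hypothesis will be $u_j \leq u_*$ pointwise, with base case $u_0 \equiv 0$ trivially satisfied. Granting this for all $j$, every legal firing sequence for $\sigma_0$ has length at most $\sum_{x \in A_*} u_*(x) < \infty$ (finite by the assumption that $A_*$ is finite and $u_*$ is integer-valued), so a maximal legal firing sequence exists and must be complete. Then the abelian property produces an odometer $u$ with $u \leq u_*$.

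The heart of the argument is the inductive step, which I would set up as follows. Assume $u_j \leq u_*$ and set $v := u_* - u_j \geq 0$. By the skew-linearity identity \eqref{skew-linearity} applied to $u_* = u_j + v$,
\begin{equation*}
\sigma_* - \sigma_j \;=\; \Delta_\rho u_* - \Delta_\rho u_j \;=\; \Delta_{E^{u_j}\rho} v.
\end{equation*}
Now consider the site $x_{j+1}$ being fired next; legality gives $\sigma_j(x_{j+1}) > 1$. The claim is that $u_j(x_{j+1}) < u_*(x_{j+1})$, i.e.\ $v(x_{j+1}) > 0$. Suppose for contradiction that $v(x_{j+1}) = 0$. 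Then by the definition \eqref{eq:stacklaplacian} of the stack Laplacian,
\begin{equation*}
\Delta_{E^{u_j}\rho} v (x_{j+1}) \;=\; \sum_{\target(e)=x_{j+1}} R_{E^{u_j}\rho}\bigl(e, v(\source(e))\bigr) \;\geq\; 0,
\end{equation*}
since each $R$-term is a count of occurrences in a stack (nonnegative whenever $v(\source(e)) \geq 0$, which holds by the inductive hypothesis). Combining, $\sigma_*(x_{j+1}) \geq \sigma_j(x_{j+1}) > 1$, contradicting the hypothesis $\sigma_* \leq 1$. Hence $v(x_{j+1}) \geq 1$, so firing $x_{j+1}$ only increases $u_j$ at a site where $u_j < u_*$, preserving $u_{j+1} \leq u_*$.

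With the induction established, the length bound $\sum_x u_*(x)$ kills any concern about non-termination: any legal firing sequence must eventually stop, and a longest one is automatically complete. Invoking the abelian property quoted before the lemma yields a well-defined odometer $u$ with $u \leq u_*$, which is the desired conclusion. The main obstacle, and really the only non-routine step, is spotting that skew-linearity plus nonnegativity of the $R$-counts forces $\Delta_{E^{u_j}\rho} v(x) \geq 0$ at every site where $v$ vanishes; once this monotonicity-at-zero observation is in hand, the rest is bookkeeping.
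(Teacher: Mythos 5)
Your proof is correct and rests on the same key observation as the paper's: at a site where the candidate odometer has been exhausted ($v(x)=0$), the stack Laplacian contribution of the remaining firings is nonnegative, so $\sigma_*(x)\geq \sigma_j(x)>1$ would contradict $\sigma_*\leq 1$. The paper packages this as a single contradiction at the end of a firing procedure capped at $u_*$ (a site $y$ with $\sigma'(y)>1$ must satisfy $u'(y)=u_*(y)$, and the remaining $u_*-u'$ firings cannot decrease the chip count at $y$), while you run the same fact as an inductive invariant $u_j\leq u_*$ along an arbitrary legal sequence; the mathematical content is essentially identical.
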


\begin{proof}
Perform legal firings in any order, without allowing any site~$x$ to fire more than $u_*(x)$ times, until no such firing is possible.  Since $A_*$ is finite, this procedure involves only finitely many firings.  Write $u'(x)$ for the number of times $x$ fires during this procedure.  We will show that this procedure gives a complete legal firing sequence, so that $u'=u$.

Write $\sigma' = \sigma_0 + \Delta_\rho u'$.  If $\sigma' \leq 1$, then $u' = u$ by the abelian property.  Otherwise, choose~$y$ such that $\sigma'(y)>1$.  We must have $u'(y) = u_*(y)$, or else it would have been possible to add another legal firing to $u'$.  Therefore, if we now perform $u_* - u'$ further firings, then since~$y$ does not fire, the number of chips at~$y$ cannot decrease.
%	\[ \sigma_* = \sigma' + \Delta_{E^{u'}(\rho)} (u_*-u') \]
Hence
	\[ \sigma_*(y) \geq \sigma'(y) > 1 \]
contradicting the assumption that $\sigma_* \leq 1$.
\end{proof}

%Write $\sigma^\circ = \sigma + \Delta_\rho u$.  Note that $\sigma^\circ \leq 1$, by the definition of the odometer function~$u$.

\begin{lemma}
Suppose that
\begin{itemize}
	\item $A_*$ is finite;
	\item $\sigma_*(x) \geq 1$ for all $x \in A_*$; and
	\item $\top_\rho(u_*)$ is acyclic on $A_*$.
\end{itemize}
Then $u_* \leq u$.
\end{lemma}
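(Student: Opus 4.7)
The plan is proof by contradiction: assume the set $B := \{x \in V : u_*(x) > u(x)\}$ is nonempty and derive a contradiction from the acyclicity of $r := \top_\rho(u_*)$ on $A_*$. Since $u\geq 0$ we have $B \subseteq A_*$, so acyclicity on $A_*$ restricts to acyclicity on $B$. As $B$ is finite and each vertex $z$ has a single outgoing rotor $r(z)$, there must exist some $x \in B$ with $y := \target(r(x)) \notin B$. The contradiction will come from double-counting chips on $B$ under $\sigma_*$ versus under $\sigma := \sigma_0 + \Delta_\rho u$, using that $\sigma \leq 1$ pointwise because $u$ is a complete odometer.

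Write $S := \sum_{z \in B}[\sigma_*(z) - \sigma(z)]$. The hypothesis $\sigma_*(z) \geq 1$ for $z \in A_* \supseteq B$ combined with $\sigma(z) \leq 1$ gives $S \geq 0$. For the upper bound, expand each $\sigma_* - \sigma$ via \eqref{eq:stacklaplacian} and swap the order of summation, grouping incoming edges to $B$ by their source:
\[
    S \;=\; \sum_{s\in V}\bigl[N_B(s,u_*(s))-N_B(s,u(s))\bigr] \;-\; \sum_{z\in B}\bigl[u_*(z)-u(z)\bigr],
\]
where $N_B(s,n) := \#\{1\leq k\leq n:\target(\rho_k(s))\in B\}$ counts how many of the first $n$ rotors at $s$ point into $B$. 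For $s\notin B$, the inequality $u_*(s)\leq u(s)$ forces the bracket to be $\leq 0$. For $s\in B$, the bracket equals $\#\{u(s)<k\leq u_*(s):\target(\rho_k(s))\in B\}$, which is at most $u_*(s)-u(s)$ by dropping the target constraint. Summing these cases, the right-hand side is $\leq 0$, so $S=0$ and every inequality used must be tight.

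Tightness of the $s\in B$ bound forces \emph{every} rotor $\rho_k(s)$ with $u(s) < k \leq u_*(s)$ to have target in $B$; specializing to $k = u_*(s)$ yields $\target(r(s)) \in B$ for every $s \in B$, contradicting the choice of $x$ with $\target(r(x)) = y \notin B$. Hence $B = \emptyset$, i.e., $u_* \leq u$. The main technical point I expect is verifying that the coarse bound $N_B(s,u_*(s)) - N_B(s,u(s)) \leq u_*(s) - u(s)$ for $s\in B$ cancels exactly against the outflow term $\sum_{z\in B}(u_*(z) - u(z))$, funneling all slack into whether $\target(\rho_k(s))$ lies in $B$ --- which is exactly where the acyclicity hypothesis produces the contradictory vertex $x$.
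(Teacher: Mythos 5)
Your proof is correct and follows essentially the same route as the paper's: both lower-bound $\sum_{x\in B}(\sigma_*(x)-\sigma(x))$ by zero, upper-bound it by counting rotors with target in $B$, and invoke acyclicity of $\top_\rho(u_*)$ to produce a top rotor escaping $B$ that breaks the resulting equality. The only difference is presentational --- you expand $\Delta_\rho u_* - \Delta_\rho u$ directly and group by source, whereas the paper routes the identical computation through $m=\min(u,u_*)$ and the shifted stacks $E^m\rho$.
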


%Note that this lemma does not assume $u_* \geq 0$.

\begin{proof}
Let
	\[ m(x) =\min(u(x),u_*(x)) \]
	\[ \psi = \sigma_0 + \Delta_\rho m \]
	\[ \sigma = \sigma_0 + \Delta_\rho u. \]
Then letting $\tilde{\rho} = E^m \rho$, we have from (\ref{skew-linearity})
	\begin{align*} \sigma  &= \sigma_0 + \Delta_\rho m + \Delta_{\tilde{\rho}} (u-m) \\
					     &= \psi + \Delta_{\tilde{\rho}} (u-m).
	\end{align*}
Likewise,
	$ \sigma_* = \psi + \Delta_{\tilde{\rho}} (u_*-m).$  Let
	\[ A = \{x \in V \mid u_*(x) > u(x)\}. \]
Since $u\geq 0$, we have $A \subset A_*$, hence $A$ is finite. We must show that $A$ is empty.

We have $\sigma_*(x) \geq 1$ for all $x \in A$ by hypothesis, while $\sigma(x) \leq 1$ by the definition of the odometer function~$u$.  So
	\begin{align*} 0 &\leq \sum_{x \in A} (\sigma_*(x) - \sigma(x)) \\
				&\leq \sum_{x \in A} \left(\Delta_{\tilde{\rho}}(u_*-m)(x) - \Delta_{\tilde{\rho}}(u-m)(x) \right).
	\end{align*}
For $x \in A$ we have $u(x) = m(x)$, so
	$\Delta_{\tilde{\rho}}(u-m)(x) \geq 0.$
Hence
	\begin{align*} 0 &\leq \sum_{x \in A} \Delta_{\tilde{\rho}}(u_*-m) \\
				&= \sum_{x \in A} \bigg( -(u_*(x)-m(x)) + \sum_{\target(e)=x} \#\{m(\source(e)) < k \leq u_*(\source(e)) \mid \rho_k(\source(e))=e \} \bigg).
	\end{align*}
The terms of the inner sum corresponding to edges $e$ such that $\source(e) \notin A$ vanish, since in that case $m(\source(e))=u_*(\source(e))$.  Hence
%Since $u(x)=m(x)$ for $x\in A$, we can replace both instances of $m$ by $u$, which yields
	\begin{align} \sum_{x \in A} (u_*(x)-m(x)) &\leq  \sum_{x\in A} \sum_{\substack{\target(e)=x \\ \source(e)\in A}} \#\{m(\source(e)) < k \leq u_*(\source(e)) \mid \rho_k(\source(e))=e \} \nonumber \\
		&=  \sum_{x \in A} \sum_{y \in A} \#\{m(y) < k \leq u_*(y) \mid \target(\rho_k(y))=x \} \nonumber \\
		&=  \sum_{y \in A} \#\{m(y) < k \leq u_*(y) \mid \target(\rho_k(y)) \in A\}.
		\label{eq:almostacontradiction}
	\end{align}
Now suppose for a contradiction that $A$ is nonempty.  Since $\top_\rho(u_*)$ is acyclic on~$A$, there exists a site $z\in A$ with $\target(\rho_k(z)) \notin A$, where $k=u_*(z)$.  Therefore the sum on the right side of (\ref{eq:almostacontradiction}) is strictly less than $\sum_{y \in A} (u_*(y)-m(y))$, which gives the desired contradiction.
\end{proof}

We conclude this section by observing a few consequences of \thmref{odomcharacterization}.  While our algorithm does not directly use the results below, we anticipate that they may be useful in further attempts to understand IDLA and rotor-router aggregation.

The stacks $\rho$ and initial configuration $\sigma_0$ determine an odometer function $u = u(\rho,\sigma_0)$, which is the unique function satisfying the hypotheses of \thmref{odomcharacterization}.  In particular, given~$\sigma_0$, the function~$u$ is completely characterized by properties of the chip configuration $\sigma_0 + \Delta_\rho u$ and the rotor configuration~$\top_\rho u$.  Since permuting the stack elements $\rho_1(x),\ldots,\rho_{u(\rho,\sigma_0)(x)-1}(x)$ does not change $\Delta_\rho u$ or $\top_\rho u$, we obtain the following result.

\begin{corollary}
\label{thm:exchangeability}
(Exchangeability) Let $\sigma$ be a chip configuration on $G$.  Let $(\rho_k(x))_{x \in V, k \in \N}$ and $(\rho'_k(x))_{x \in V, k \in \N}$ be two collections of rotor stacks, with the property that for each vertex $x \in V$, the rotors
	\[ \rho'_1(x), \ldots, \rho'_{u(\rho,\sigma)(x)-1}(x) \]
are a permutation of
	\[ \rho_1(x), \ldots, \rho_{u(\rho,\sigma)(x)-1}(x). \]
Suppose moreover that
	\[ \rho_{u(\rho,\sigma)(x)}(x) = \rho'_{u(\rho,\sigma)(x)}(x). \]
Then $u(\rho',\sigma) = u(\rho,\sigma)$.
\end{corollary}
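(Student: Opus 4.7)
The plan is to apply Theorem~\ref{thm:odomcharacterization} directly, with $u_* := u(\rho,\sigma)$ and with the new stacks $\rho'$ in place of $\rho$. Since $u_*$ is by definition the odometer for $(\rho,\sigma)$, it satisfies all four hypotheses of the theorem with respect to $\rho$; the task is to show that it continues to satisfy them with respect to $\rho'$. If so, the theorem yields $u(\rho',\sigma) = u_* = u(\rho,\sigma)$.

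First I would isolate the key observation: the value $R_{\rho'}(e, u_*(\source(e)))$ depends only on the \emph{multiset} $\{\rho'_1(y), \ldots, \rho'_{u_*(y)}(y)\}$, where $y = \source(e)$. By the two hypotheses of the Corollary, this multiset equals $\{\rho_1(y), \ldots, \rho_{u_*(y)}(y)\}$: indeed, $\rho'_1(y),\ldots,\rho'_{u_*(y)-1}(y)$ is a permutation of $\rho_1(y),\ldots,\rho_{u_*(y)-1}(y)$, and the $u_*(y)$-th entries agree. Consequently $R_{\rho'}(e,u_*(\source(e))) = R_\rho(e,u_*(\source(e)))$ for every edge $e$, and plugging this into the definition~\eqref{eq:stacklaplacian} gives
\begin{equation*}
\Delta_{\rho'} u_* \;=\; \Delta_\rho u_*.
\end{equation*}

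From this identity the first three hypotheses of Theorem~\ref{thm:odomcharacterization} transfer immediately: $\sigma_0 + \Delta_{\rho'} u_* = \sigma_0 + \Delta_\rho u_*$, so the chip configuration produced by the new stacks coincides with the final configuration $\sigma$ of $(\rho,\sigma_0)$, which satisfies $\sigma \leq 1$ and $\sigma(x) = 1$ for all $x \in \supp(u_*)$ by the abelian property applied to $(\rho,\sigma_0)$. Finiteness of $A_* = \supp(u_*)$ is a property of $u_*$ alone and is preserved trivially.

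For the acyclicity hypothesis I would use the second hypothesis of the Corollary directly: since $\rho'_{u_*(x)}(x) = \rho_{u_*(x)}(x)$ for every $x$, we have
\begin{equation*}
\top_{\rho'}(u_*)(x) \;=\; \rho'_{u_*(x)}(x) \;=\; \rho_{u_*(x)}(x) \;=\; \top_\rho(u_*)(x),
\end{equation*}
so the top-rotor configurations coincide, and acyclicity on $A_*$ is inherited from the characterization of $u_*$ as the odometer of $(\rho,\sigma_0)$. All four hypotheses of Theorem~\ref{thm:odomcharacterization} are therefore satisfied by $u_*$ with respect to the stacks $\rho'$ and the initial configuration $\sigma_0 = \sigma$, so the theorem identifies $u_*$ as the odometer $u(\rho',\sigma)$. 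There is no real obstacle here; the only thing to be careful about is that the definition of $R_\rho(e,n)$ excludes $\rho_0$, which is why the Corollary's hypotheses say nothing about $\rho_0(x)$ — an unconstrained $\rho_0$ is harmless precisely because it never contributes to either $\Delta_\rho u_*$ or $\top_\rho(u_*)$.
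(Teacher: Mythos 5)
Your proof is correct and is essentially the paper's own argument: both observe that the hypotheses force $R_{\rho'}(e,u_*(\source(e)))=R_\rho(e,u_*(\source(e)))$ for every edge, hence $\Delta_{\rho'}u_*=\Delta_\rho u_*$ and $\top_{\rho'}(u_*)=\top_\rho(u_*)$, and then invoke Theorem~\ref{thm:odomcharacterization}, which characterizes the odometer purely through these two objects. No gaps.
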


\old{
\begin{proof}
Let $f = u(\rho,\sigma)$.  Then $\top_\rho f = \top_{\rho'} f$.  Moreover, $R_\rho(e,f(x)) = R_{\rho'}(e,f(x))$ for all edges $e$, hence
	\[ \Delta_\rho f = \Delta_{\rho'} f. \]
Since \thmref{odomcharacterization} characterizes $u(\rho,\sigma)$ in terms of properties only of $\Delta_\rho u$ and $\top_\rho(u)$, the proof is complete.
\end{proof}
}

Edges $e_1,\ldots,e_m \in E$ form a \emph{directed cycle} if $\source(e_{i+1})=\target(e_i)$ for $i=1,\dots,m-1$ and $\source(e_1)=\target(e_m)$.  The next result allows us to remove directed cycles of rotors from the stacks, without changing the final chip or rotor configuration.

\begin{corollary}
(Cycle removal)
Let  $(\rho_k(x))_{x \in V, k \in \N}$ be a collection of rotor stacks on~$G$, and let $(x_i,k_i)$ for $i=1,\ldots,m$ be distinct pairs such that the edges $\{\rho_{k_i}(x_i)\}_{i=1}^m$ form a directed cycle in $G$.  Let $\sigma$ be a chip configuration on $G$, and suppose that
$k_i \leq u(\rho,\sigma)(x_i)-1$ for all $i=1,\ldots,m$.  Let $\rho'$ be the rotor stacks obtained from $\rho$ by removing the rotors $\rho_{k_i}(x_i)$ for all $i=1,\ldots,m$ and re-indexing the remaining rotors in each stack by $\N$.  Then
	\[ u(\rho,\sigma) = u(\rho',\sigma) + \chi \]
where $\chi(x) = \# \{1\leq i \leq m \mid x_i=x\}$.  Moreover, the final chip and rotor configurations agree:
	\[ \sigma + \Delta_\rho [u(\rho,\sigma)] = \sigma + \Delta_{\rho'} [u(\rho',\sigma)] \]
	\[ \top_\rho [u(\rho,\sigma)] = \top_{\rho'}[u(\rho',\sigma)]. \]
\end{corollary}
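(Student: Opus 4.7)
The plan is to apply Theorem~\ref{thm:odomcharacterization} (the ``no hills, no holes, no cycles'' characterization) to the candidate function $u^\# := u - \chi$ with rotor stacks $\rho'$ and initial configuration $\sigma$, where $u := u(\rho,\sigma)$.  If the three hypotheses can be verified for $u^\#$, uniqueness gives $u(\rho',\sigma) = u^\# = u - \chi$, and the two stated identities for the final chip and rotor configurations then fall out of the intermediate identities $\sigma + \Delta_{\rho'} u^\# = \sigma + \Delta_\rho u$ and $\top_{\rho'}(u^\#) = \top_\rho(u)$ established along the way.

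First I would check nonnegativity and support.  For each $x$, the distinctness of the pairs $(x_i,k_i)$ together with the bound $k_i \le u(x_i) - 1$ forces $\{k_i : x_i = x\}$ to be $\chi(x)$ distinct elements of $\{0,1,\ldots,u(x)-1\}$, whence $\chi(x) \le u(x)$ and $u^\# \ge 0$; a short check gives $\supp(u^\#) = \supp(u) =: A$, so $A^\# := \supp(u^\#)$ is finite.

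The heart of the proof is the identity $\Delta_{\rho'} u^\# = \Delta_\rho u$.  For each source $y$, removing $\chi(y)$ rotors from the stack and re-indexing produces a bijection between used positions of $\rho'$ and the surviving used positions of $\rho$, which yields
\[
    R_{\rho'}(e,u^\#(y)) \;=\; R_\rho(e,u(y)) - \#\{i : x_i = y,\ \rho_{k_i}(y) = e\}.
\]
Substituting into the definition of the stack Laplacian and using $-u^\#(x) = -u(x)+\chi(x)$, the difference $(\Delta_{\rho'} u^\# - \Delta_\rho u)(x)$ collapses to $\chi(x) - \#\{i : \target(\rho_{k_i}(x_i)) = x\}$, which is the excess of cycle edges leaving $x$ over cycle edges entering $x$.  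Since $\{\rho_{k_i}(x_i)\}_{i=1}^m$ forms a closed directed cycle, these two counts agree at every vertex, so the difference vanishes and $\Delta_{\rho'} u^\# = \Delta_\rho u$.

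Given this identity, the hypotheses $\sigma_* \le 1$ everywhere and $\sigma_*(x) = 1$ on $A^\#$ for Theorem~\ref{thm:odomcharacterization} transfer directly from the odometer $u$, which satisfies them by definition.  The same index bookkeeping shows that the new position $u^\#(x)$ corresponds to the largest retained old position in $\{1,\ldots,u(x)\}\setminus\{k_i : x_i = x\}$, namely $u(x)$ itself, so $\top_{\rho'}(u^\#)(x) = \rho_{u(x)}(x) = \top_\rho(u)(x)$ on $A^\# = A$; acyclicity of $\top_{\rho'}(u^\#)$ on $A^\#$ is then inherited from that of $\top_\rho(u)$ on $A$.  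The main technical obstacle I anticipate is precisely the index-shifting bookkeeping needed to prove the displayed rotor-count identity --- it is only after that cancellation that the cycle hypothesis actually bites and eliminates the mismatch between the two stack Laplacians.
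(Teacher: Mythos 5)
Your proposal is correct and follows essentially the same route as the paper: apply Theorem~\ref{thm:odomcharacterization} to $u-\chi$ with the stacks $\rho'$, reduce everything to the identity $R_{\rho'}(e,u(y)-\chi(y)) = R_\rho(e,u(y)) - c(e)$, and use the fact that the cycle contributes equally to the in- and out-counts at each vertex so that $\sum_{\target(e)=x} c(e) = \chi(x)$. The paper's proof is just a terser version of the same argument (it takes the hypothesis transfer and $\top_\rho u = \top_{\rho'}(u-\chi)$ as immediate), so no further comparison is needed.
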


\begin{proof}
%By decomposing $\mathcal{C}$ as a union of edge-disjoint cycles, we may assume that $x_1, \ldots, x_m$ are distinct.  Next, by Theorem~\ref{thm:exchangeability}, we may permute the stacks so that $k_i=1$ for all $i=1,\ldots,m$.
Let $f=u(\rho,\sigma)$.  The bound on $k_i$ implies that $\top_\rho f = \top_{\rho'}(f-\chi)$.  By Theorem~\ref{thm:odomcharacterization}, to complete the proof it suffices to check that $\Delta_\rho f = \Delta_{\rho'}(f-\chi)$.  For any vertex $x$ and edge $e$ with $\source(e)=x$, we have
	\begin{align*} R_\rho(e,f(x)) &= \# \{ 1 \leq k \leq f(x) \mid \rho_k(x)=e \} \\
					&= R_{\rho'}(e,f(x)-\chi(x)) + c(e)
	\end{align*}
where $c(e) = \# \{1 \leq i \leq m \mid \rho_{k_i}(x_i)=e\}$.  Here we have used the fact that the pairs $(x_i,k_i)$ are distinct.  Hence
	\begin{align} \Delta_\rho f (x) &= -f(x) + \sum_{\target(e)=x} R_\rho(e,f(x)) \nonumber \\
						&= -f(x) + \sum_{\target(e)=x} R_{\rho'}(e, f(x)-\chi(x)) + \sum_{\target(e)=x} c(e). \label{eq:deltarho'off-chi}
	\end{align}
Since the edges $\{\rho_{k_i}(x_i)\}_{i=1}^m$ form a directed cycle, we have $\sum_{\target(e)=x} c(e) =  \sum_{\source(e)=x} c(e) = \chi(x)$.  So \eqref{eq:deltarho'off-chi} simplifies to $\Delta_{\rho'}(f-\chi)(x)$, which shows that $\Delta_\rho f = \Delta_{\rho'}(f-\chi)$.
\end{proof}

%%%%%%%%%%%%%%%%%%%%%%%%%%%%%%%%%%%%%%%%%%%%%%%%%%%%%%%%%%%%%%%%%%%%%%%%%%%%
%%%%%%%%%%%%%%%%%%%%%%%%%%%%%%%%%%%%%%%%%%%%%%%%%%%%%%%%%%%%%%%%%%%%%%%%%%%%

\section{The Algorithm: From Approximation to Exact Calculation}
\label{sec:thealgorithm}

\begin{figure}[ptb]
    \centering
    \subfloat[After odometer approximation ($u_1$)]{
        \drawblob{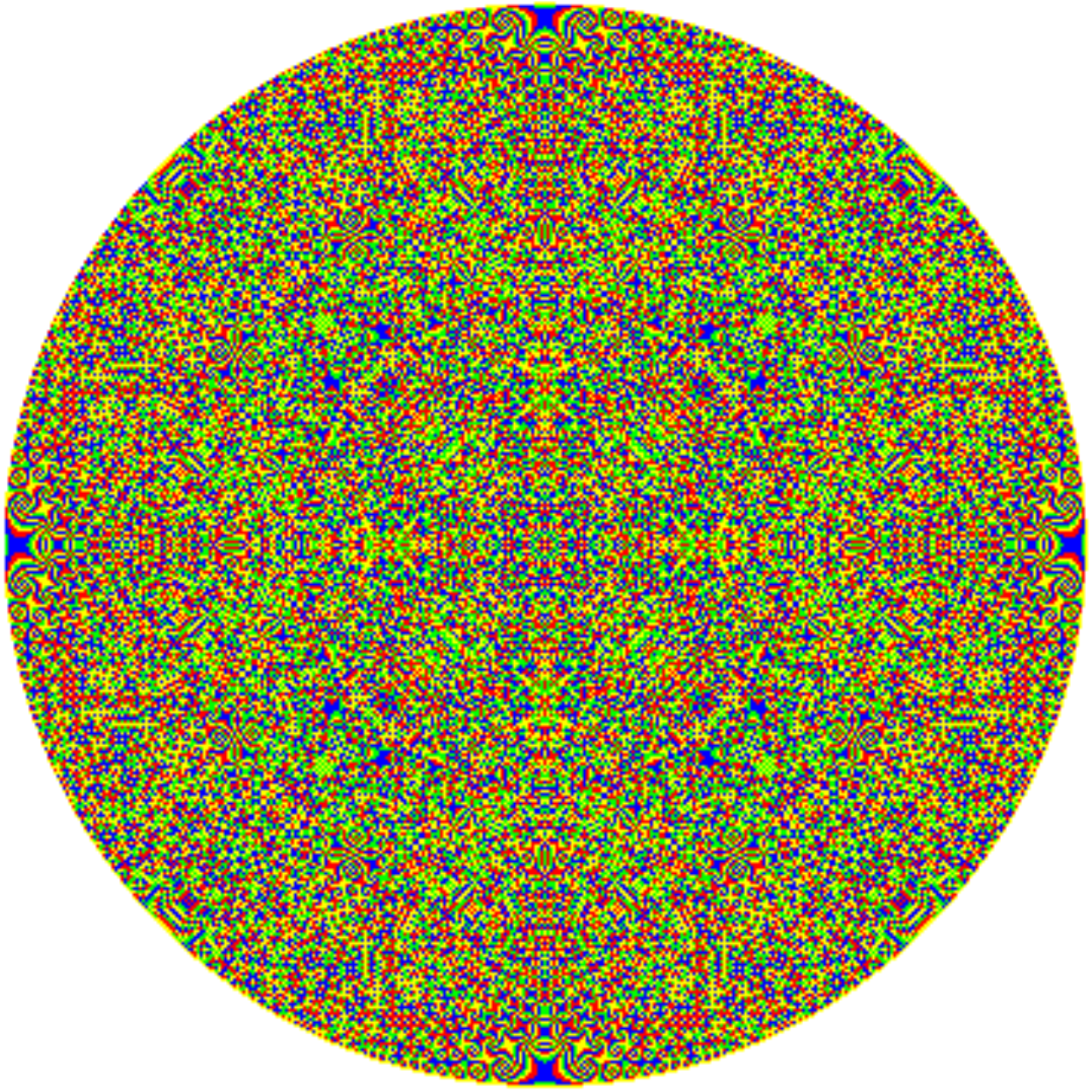}
    }
    \subfloat[After annihilation ($u_2$)]{
        \drawblob{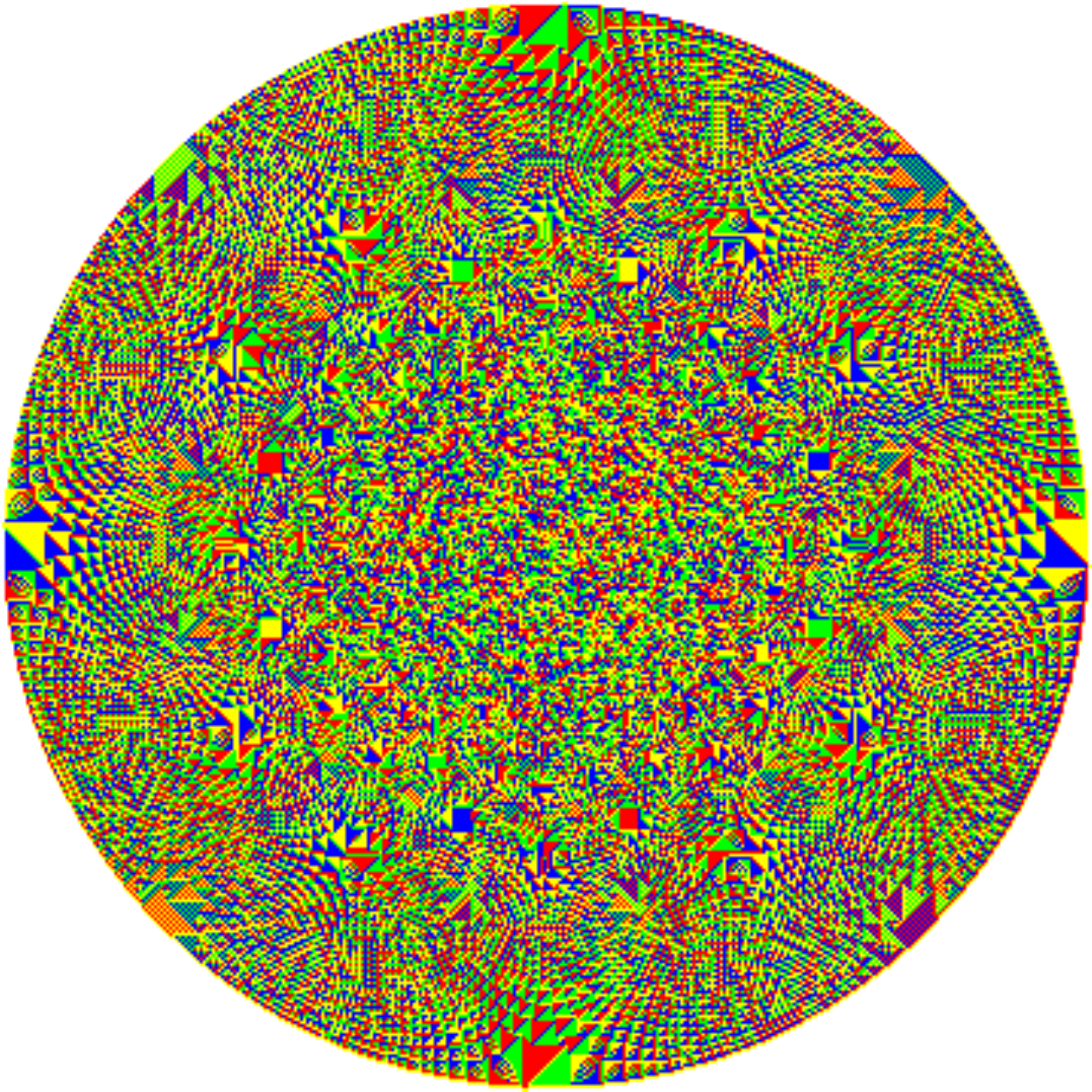}
    }
    \subfloat[After cycle popping ($u_3$)]{
        \drawblob{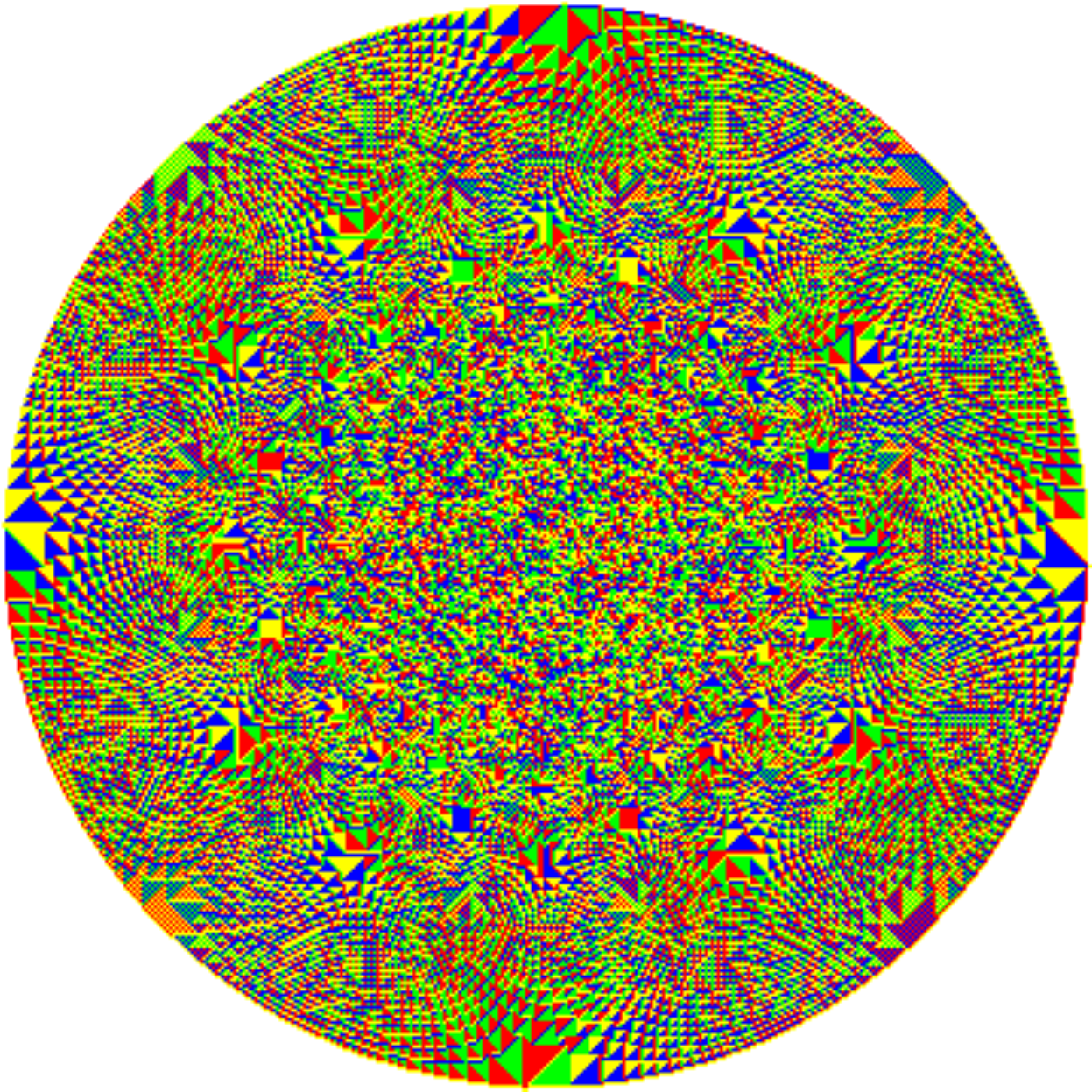}
    }
    \caption{Classic rotor router aggregation of $N=100,000$ chips with counterclockwise rotor sequence.
        The pictures show the direction of the rotors on top of the stacks
        after each step of the computation
            (yellow=\rotseq{W},
             red=\rotseq{S},
             blue=\rotseq{E},
             green=\rotseq{N}).}
%    \vspace*{-\baselineskip}
    \label{fig:rotor}
\end{figure}

In this section we describe how to compute the odometer function~$u$ exactly, given as input an approximation~$u_1$.  The running time depends on the accuracy of the approximation, but the correctness of the output does not.  In the next section we explain how to find a good approximation~$u_1$ for the example of $N$ chips started at the origin in~$\Z^2$.

Recall that $G$ may be finite or infinite, and we assume that $G$ is strongly connected.  We assume that the initial configuration~$\sigma_0$ satisfies $\sigma_0(x) \geq 0$ for all $x$, and $\sum_x \sigma_0(x) < \infty$.  If $G$ is finite, we assume that $\sum_x \sigma_0(x)$ is at most the number of vertices of~$G$ (otherwise, some chips would never get absorbed).  The only assumption on the approximation~$u_1$ is that it is nonnegative with finite support.  Finally, we assume that the rotor stacks are infinitive, which ensures that the growth process terminates after finitely many firings: that is, $\sum_{x \in V} u(x) < \infty$.

For $x \in V$, write
	\[ d_{out}(x) = \# \{e \in E \mid \source(e)=x\} \]
	\[ d_{in}(x) = \# \{e \in E \mid \target(e)=x\} \]
for the out-degree and in-degree of~$x$.

The odometer function~$u$ depends on the initial chip configuration~$\sigma_0$ and on the rotor stacks~$(\rho_k(x))_{k \geq 0}$.  The latter are completely specified by the function~$R(e,n)$ defined in \secref{leastaction}.  Note that for rotor-router aggregation, since the stacks are periodic, $R(e,n)$ has the simple explicit form
\begin{equation}
    \label{eq:RRR}
    R(e,n) = \floor{\frac{n+d_{out}(x)-j}{d_{out}(x)}}
\end{equation}
where $j$ is the least positive integer such that $\rho_j(x)=e$.
For IDLA, $R(e,n)$ is a random variable with the $\Binomial(n,p)$ distribution, where $p$ is the transition probability associated to the edge~$e$.

In this section we take $R(e,n)$ as known.  From a computational standpoint, if the stacks are random, then determining $R(e,n)$ involves calls to a pseudorandom number generator.  We address the issue of minimizing the number of such calls in~\secref{exp:IDLA}.

%The specific properties of the different models are discussed in~\secref{models}.

Our algorithm consists of an approximation step followed by two error-correction steps: an annihilation step that corrects the chip locations, and a reverse cycle-popping step that corrects the rotors.

\begin{figure}[ptb]
    \centering
    \subfloat[After odometer approximation ($\sigma_1$)]{
        \drawblob{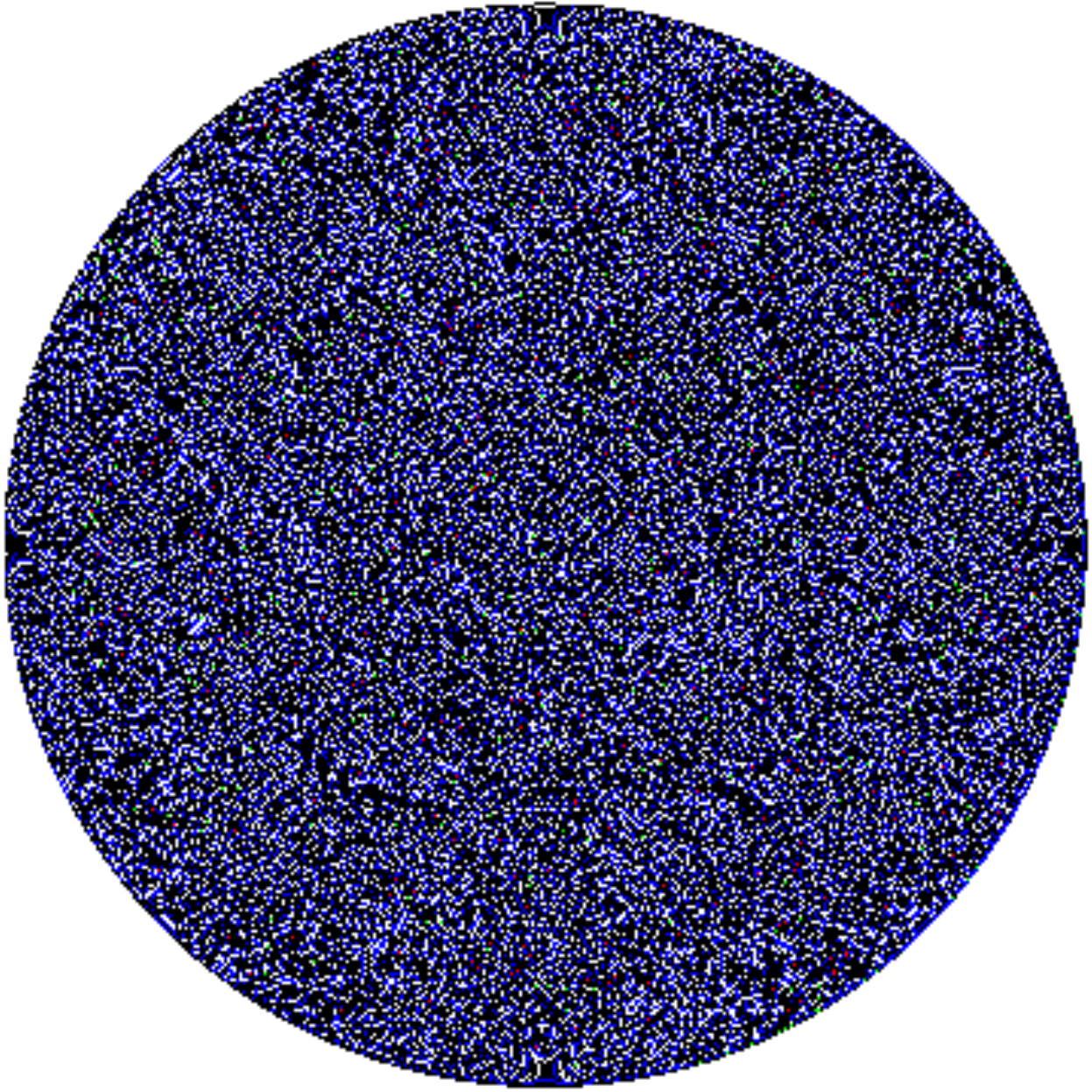}
    }
    \subfloat[After annihilation ($\sigma_2$)]{
        \drawblob{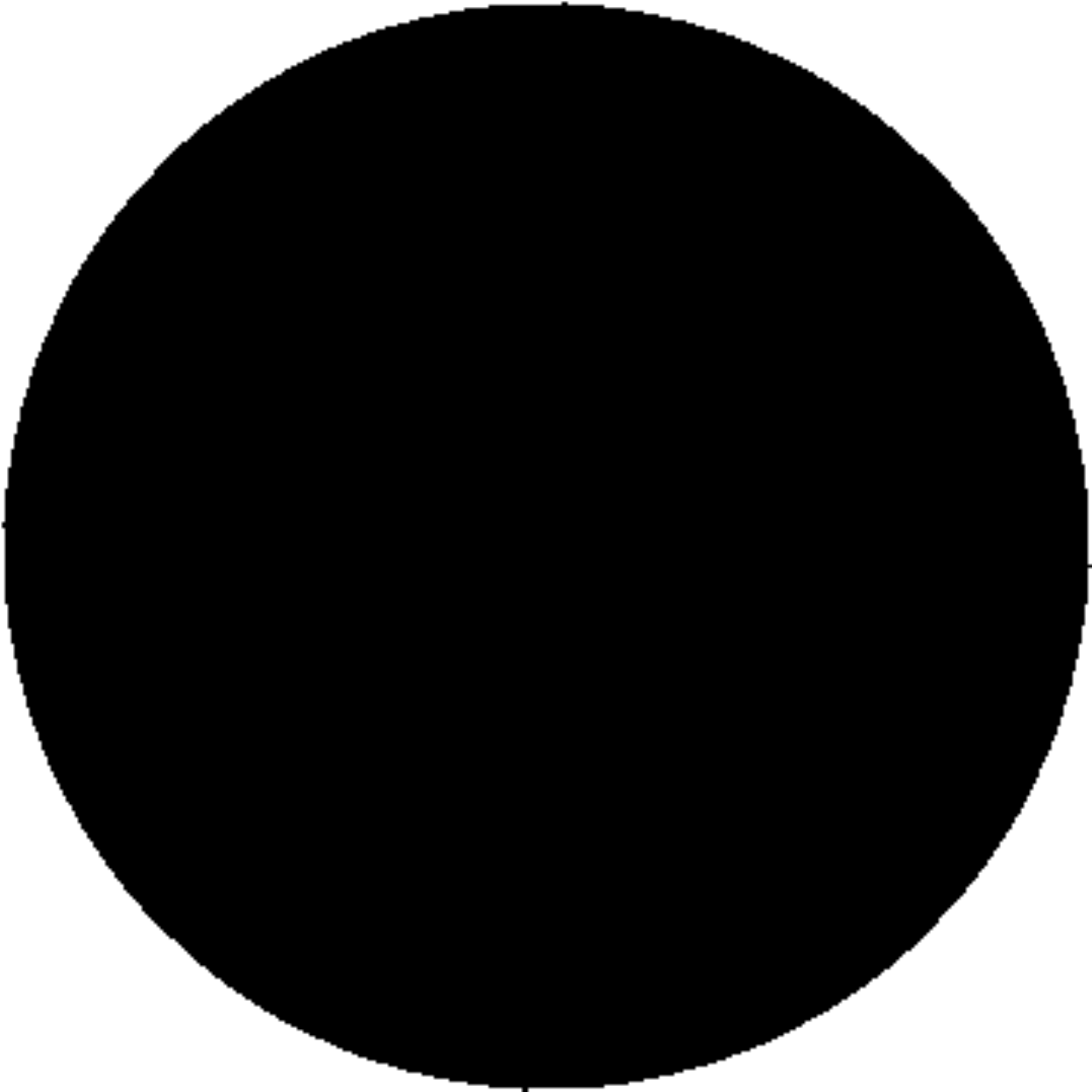}
    }
    \subfloat[After cycle popping ($\sigma_3$)]{
        \drawblob{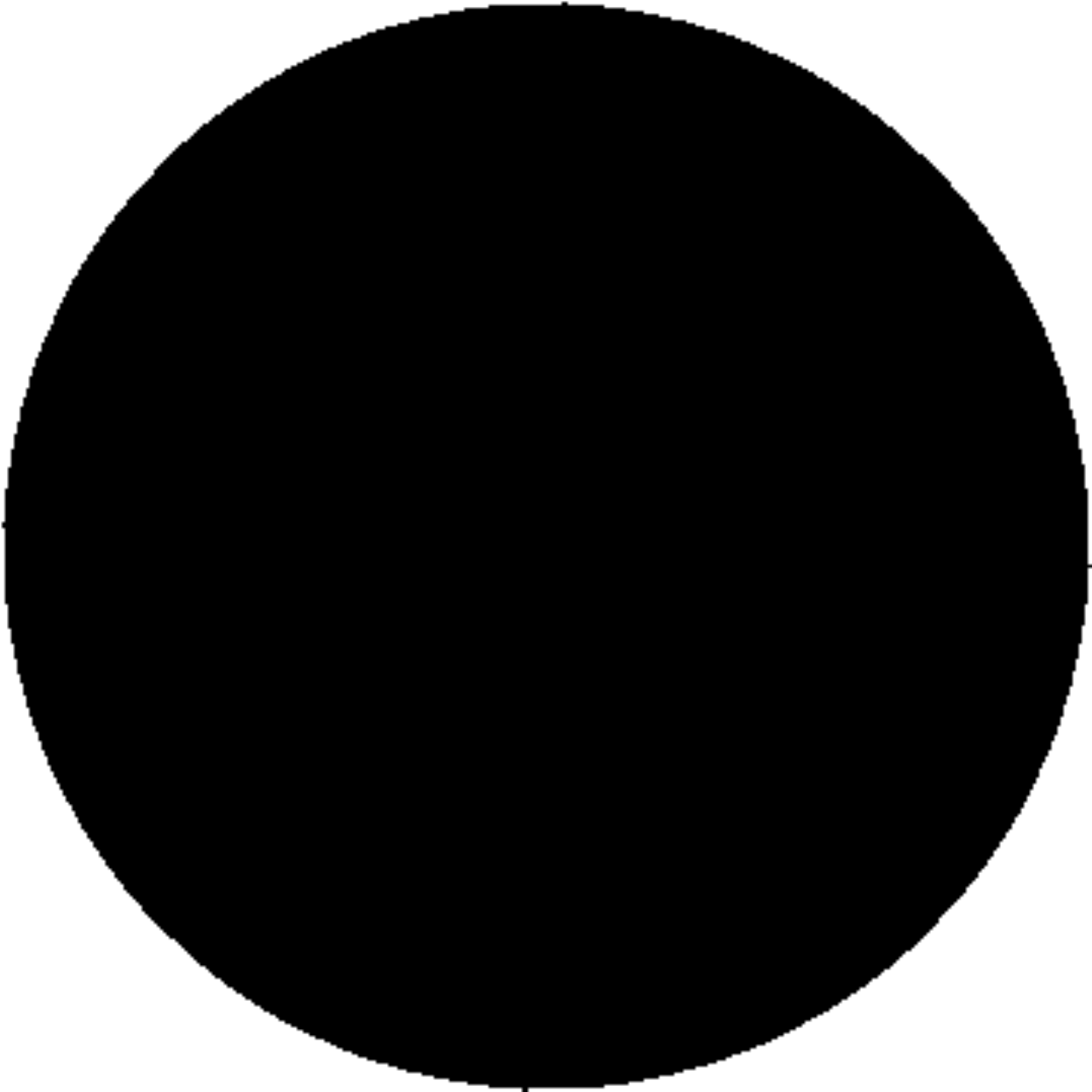}
    }
    \caption{Classic rotor router aggregation of $N=100,000$ chips with counterclockwise rotor sequence.
        The pictures show the number of chips after each step of the algorithm.
            (Location $x$ is colored
            red if $\sigma'(x)=-1$,
            white if $\sigma'(x)=0$,
            black if $\sigma'(x)=1$,
            blue if $\sigma'(x)=2$,
            green if $\sigma'(x)=3$.)
            Note that there are no locations with $\sigma'(x)<-1$ or $\sigma'(x)>3$, and that no chips move during the final cycle-popping phase.}
%    \vspace*{-\baselineskip}
    \label{fig:chip}
\end{figure}

\begin{enumerate}
\item \textbf{Approximation.} Perform firings according to the approximate odometer, by computing the chip configuration $\sigma_1 = \sigma_0 + \Delta_\rho u_1$.
%To do this, we simply reduce the number of chips at $x$ by $u_1(x)$, and add $R(e,u_1(x))$ chips at $\target(e)$, for each edge $e=(x,\target(e))$ emanating from $x$.  This yields the new chip configuration
%	\[ \sigma_1(x) = \sigma_0(x) - u_1(x) + \sum_{\target(e)=x} R(e,u_1(\source(e))). \]
%where the sum is over all incoming edges $e$ to $x$.
Using equation~(\ref{eq:stacklaplacian}), this takes time $O(d_{in}(x)+1)$ for each vertex $x$, for a total time of $O(\#E + \#V)$.  This step is where the speedup occurs, because we are performing many firings at once: $\sum_x u_1(x)$ is typically much larger than $\# E + \#V$.  Return~$\sigma_1$. \\

\item \textbf{Annihilation.} Start with $u_2=u_1$ and $\sigma_2=\sigma_1$.
If $x \in V$ satisfies $\sigma_2(x)>1$, then we call $x$ a \emph{hill}.
If $\sigma_2(x)<0$, or if $\sigma_2(x)=0$ and $u_2(x)>0$, then we call $x$ a \emph{hole}.
For each $x \in \Z^2$,
%\begin{algorithm}[t]
%\small
%\caption{Cancel out hills and holes.}
%\label{alg:hillsholes}
%\begin{algorithmic}
%    \FORALL{$x\in \Z^2$}
%       \IF{$x$ is a hill}{\STATE fire(x)}
%       	 \STATE fire(x)
%       \ELSE \IF{$x$ is a hole}{\STATE unfire(x)}
%     \STATE unfire(x)
%        \ENDIF
%        \ENDIF
%    \ENDFOR
% \end{algorithmic}
%\end{algorithm}
	\begin{enumerate}
	\item If $x$ is a hill, fire it by incrementing $u_2(x)$ by $1$ and then
	      moving one chip from $x$ to $\target(\top(u_2)(x))$.
%            chip[startx][starty]--;
%            odo[startx][starty]++;
%            rotor[startx][starty]=(rotor[startx][starty]+1)%rotseql;
%            char dx=rotseq[rotor[x][y]][0], dy=rotseq[rotor[x][y]][1];
%            x+=dx, y+=dy;
%            chip[x][y]++;
	\item If $x$ is a hole, unfire it by moving one chip from
	      $\target(\top(u_2)(x))$ to $x$ and then decrementing $u_2(x)$ by one.	
%            chip[x][y]++;
%            odo[x][y]--;
%            char dx=rotseq[rotor[x][y]][0], dy=rotseq[rotor[x][y]][1];
%            rotor[x][y]=(rotor[x][y]+rotseql-1)%rotseql;
%            x+=dx, y+=dy;
%            chip[x][y]--;	
	\end{enumerate}
	A hill can disappear in one of two ways: by reaching an unoccupied site on the boundary, or by reaching a hole and canceling it out.
%	Guided by $u_2$, the hills and holes perform a walk until they meet and cancel out.
    When there are no more hills and holes, return $u_2$. \\

\item \textbf{Reverse cycle-popping.}  Start with $u_3 = u_2$ and
	\[ A_3 = \{x \in V \colon u_3(x)>0\}. \]
If $\top(u_3)$ is not acyclic on~$A_3$, then pick a cycle and unfire each of its vertices once.  This may create additional cycles.   Update $A_3$ (it may shrink, since $u_3$ has decreased) and repeat until $\top(u_3)$ is acyclic on~$A_3$.
Output $u_3$.
\end{enumerate}

\noindent
Next we argue that the algorithm terminates, and that its final output~$u_3$ equals the odometer function~$u$.  Step~2 is simplest to analyze if we first fire all hills, and only after there are no more hills begin unfiring holes.  In practice, however, we found that it is much faster to fire hills and unfire holes in tandem; see \textsection\ref{sec:exp:implementation} for the details of our implementation.

At the beginning of step~2, all hills are contained in the set
	\[ S = \{x\in V \colon \sigma_1(x) > 0\}. \]
Since $\sigma_0$ and $u_1$ have finite support, $\sigma_1 = \sigma_0 + \Delta_\rho u_1$ has finite support, so~$S$ is finite.  Since the total number of chips is conserved, we have
	\[ \sum_{x \in V} \sigma_1(x) = \sum_{x \in V} \sigma_0(x). \]
The right side is $\leq \#V$ by assumption.  Therefore if $S=V$, we must have $\sigma_1(x) = 1$ for all $x\in V$; in this case there are no hills or holes, and we move on to step~3.

Suppose now that $S$ is a proper subset of~$V$.  Let
	\[ h = \sum_{x \in S} (\sigma_1(x)-1) \]
be the total height of the hills.  Note that firing a hill cannot increase~$h$.  If a given vertex fires infinitely often, then since the rotor stacks are infinitive, each of its out-neighbors also fires infinitely often; since~$G$ is strongly connected, it would follow that every vertex fires infinitely often.  Thus after firing finitely many hills, a chip must leave~$S$.  When this happens,~$h$ decreases.  Thus after finitely many firings we reach~$h=0$ and there are no more hills.

Next we begin unfiring the holes.  After all hills have been settled, we have $u_2(x) \geq 0$ for all $x\in V$.  The sum $\sum_{x \in V} u_2(x)$ is finite, and each unfiring decreases it by one.  To show that the unfiring step terminates, it suffices to show that for all $x \in V$ the unfiring of holes never causes $u_2(x)$ to become negative.  Indeed, suppose that $u_2(x)=0$ and $u_2(y) \geq 0$ for all neighbors $y$ of~$x$.  Then the number of chips at $x$ is $\sigma_0(x) + \Delta_\rho u_2(x) \geq 0$,
%here we use nonnegativity of the initial configuration.
so~$x$ is not a hole.  Therefore the unfiring step terminates and its output~$u_2$ is nonnegative.

% total number of hills can't increase, total number of holes can't increase, hills can't escape to infinity because they stop at the boundary.  But why can't holes escape to infinity?
%
% Related issue: to invoke Theorem 2.1 we need to prove that u_3 is nonnegative.
After step~2 there are no hills or holes, i.e., $0 \leq \sigma_2(x) \leq 1$ for all $x$, and if $\sigma_2(x)=0$ then $u_2(x)=0$.

During step~3, we unfire sites only within $A_3$.  Since $\sum_{x \in V} u_3(x)$ is finite and decreases with each unfiring, this step terminates and its output $u_3$ is nonnegative.
When a cycle is unfired, each vertex in the cycle sends a chip to the previous vertex, so there is no net movement of chips: $\sigma_3 = \sigma_2$.  In particular, there are no hills at the end of step~3.  If $\sigma_3(x)=0$, then $\sigma_2(x)=0$; since there were no holes at the end of step~2, this means that $u_2(x)=0$, and hence $u_3(x)=0$.  So there are still no holes at the end of step~3.  By construction, $\top(u_3)$ is acyclic on~$A_3$.  Therefore all conditions of \thmref{odomcharacterization} are satisfied, which shows that $u_3=u$ as desired.

%%%%%%%%%%%%%%%%%%%%%%%%%%%%%%%%%%%%%%%%%%%%%%%%%%%%%%%%%%%%%%%%%%%%%%%%%%%%

\section{Approximating the Odometer Function}
\label{sec:approxodo}

Next we describe how to find a good approximation to the odometer to use as input to the algorithm described in \secref{thealgorithm}.  Our main assumption will be that the rotor stacks are \emph{balanced} in the sense that
\[
    R(e,n) \approx R(e',n)
\]
for all $n\in \N$ and all edges $e,e'$ with $\source(e)=\source(e')$.  By definition, rotor-router aggregation obeys the strong balance condition
\[
    |R(e,n) - R(e',n)| \leq 1.
\]
IDLA is somewhat less balanced: $|R(e,n)-R(e',n)|$ is typically on the order of~$\sqrt{n}$.  It turns out that this level of balance is still enough to get a fairly good approximation and hence a significant speedup in our algorithm.
%slightly misleading: there is actually more balance than meets the eye, because stacks at different vertices are independent.  We could define versions of R that sum over sets of edges.  The goodness of fit is a function of these "global balances".  But we don't plan to prove anything, so doesn't seem worth going into here.

If the rotor stacks are balanced, then the stack Laplacian $\Delta_\rho$ is well-approximated by the operator $\Delta$ on functions $u\colon V \to \Z$ defined by
\[
    \Delta u (z) = \sum_{\target(e)=z} \frac{u(\source(e))}{d_{out}(\source(e))} - u(z).
\]
%The sum is over all edges $e$ such that $\target(e)=z$.
%Note that $\Delta$ is the adjoint of the discrete Laplacian $L$ on $G$, defined by 
%	\[ L u (z)  =  \sum_{\source(e)=z} \frac{u(\target(e))}{d_{out}(z)} - u(z). \]

In this setting we can approximate the behavior of our stack-based aggregation with
an idealized model called the \emph{divisible sandpile}~\cite{LP09a}.  Instead of discrete chips, each vertex $z$ has a real-valued ``mass'' $\sigma_0(z)$.  Any site with mass greater than $1$ can fire by keeping mass $1$ for itself, and distributing the excess mass to its out-neighbors by sending an equal amount of mass along each outgoing edge.  The resulting odometer function
	\[ v(z) = \mbox{total mass emitted from $z$} \]
satisfies the discrete variational problem

\begin{align}
\label{variational}
v &\geq 0 \nonumber \\
\Delta v &\leq 1 - \sigma_0 \\
v (\Delta v - 1 + \sigma_0) &= 0. \nonumber
\end{align}

In words, these conditions say that each site emits a nonnegative amount of mass, each site ends with mass at most~$1$, and each site that emits a positive amount of mass ends with mass exactly~$1$.
%at each site, at least one of these two inequalities is an equality.
%This problem has a unique solution by the maximum principle.
The conditions \eqref{variational} can be reformulated as an obstacle problem, that of finding the smallest superharmonic function lying above a given function; see~\cite{LP09b}.  That formulation shows existence and uniqueness of the solution~$v$.

If the rotor stacks are sufficiently balanced, we expect the divisible sandpile odometer function~$v$ to approximate closely our abelian stack odometer~$u$.  The next question is how to compute or approximate~$v$.  The obstacle problem formulation shows that~$v$ can be computed exactly by linear programming.
Such an approach works well for small to moderate system sizes, but for the sizes we are interested in, the number of variables~$v(z)$ is prohibitively large.

Fortunately, for specific examples it is sometimes possible to guess a near solution~$w \approx v$.  We briefly indicate how to do this for the specific example of interest to us, the initial configuration
	\[ \sigma_0 = N\delta_o \]
consisting of~$N$ chips at the origin $o \in \Z^2$.  In that case, the set
%$\{x\in \Z^2 \colon v>0\}$
%of sites that emit a positive amount of mass
of sites that are fully occupied in the final divisible sandpile configuration $\sigma_0+\Delta v$ is very close to the disk
\[
    B_r = \{z \in \Z^2 \colon |z| < r \}
\]
of radius $r=\sqrt{N/\pi}$; see \cite[Theorem~3.3]{LP09a}.
% stated there for fully occupied sites
Here $|z| = (z_1^2+z_2^2)^{1/2}$ is the Euclidean norm.
Thus we are seeking a function $w\colon \Z^2 \to \R$ satisfying
\begin{align*}
    \Delta w &= 1-N\delta_o &&\mbox{ in } B_r \\
	       w &\approx 0 &&\mbox{ on } \partial B_r.
\end{align*}
%and $v \approx 0$ on $\partial B_r$.
An example of such a function is
	\begin{equation}
	\label{quadraticplusgreen}
    	w(z) = |z|^2 - N a(z) - r^2 + N a((r,0))
	\end{equation}
where $a(z)$ is the potential kernel
for simple random walk $(X_n)_{n\geq 0}$ started at the origin in~$\Z^2$, defined as
\[
    a(z) = \sum_{n = 1}^\infty \left( \PP(X_n = o) - \PP(X_n=z) \right).
\]
%	\[ a(x) = \lim_{n \to \infty} \EE_o \left( \# \{j<n \colon X_j = o\} - \# \{j<n \colon X_j=x\} \right) \]
Its discrete Laplacian is $\Delta a = \delta_o$.

As input to our algorithm we will use the function \[ w(z)^+ := \max(0,w(z)) \]
where $w(z)$ is given by~(\ref{quadraticplusgreen}).
  One computational issue remains, which is how to compute the potential kernel $a(z)$.
The potential kernel has the asymptotic expansion~\cite[Remark~2]{FU}
\begin{equation}
\label{eq:expansion}
    a(z) = \frac{2}{\pi} \ln |z| + \kappa +
        \frac{1}{6\pi} \frac{8\omega_1^2 \omega_2^2 - 1}{|z|^2} + O(|z|^{-4})
\end{equation}
where $\omega = z/|z|$ and $\kappa = \frac{\ln 8 + 2\gamma}{\pi}$; here $\gamma \approx 0.577216$ is Euler's constant $\lim (\sum_{k=1}^n \frac{1}{k} - \ln n)$.
Note that if $\theta$ is the argument of~$z$, then
\begin{align*}
    8 \omega_1^2 \omega_2^2 - 1 &=  8 \sin^2 \theta \cos^2 \theta - 1 \\
	&= 2 \sin^2 2\theta - 1 \\
	&= \sin^2 2\theta - \cos^2 2\theta \\
	&= -\cos 4\theta.
\end{align*}
Thus, identifying $\Z^2$ with $\Z + i\Z \subset \C$, we can write
	\[ a(z) = \frac{2}{\pi} \ln |z| + \kappa -
        \frac{1}{6\pi} \frac{\mbox{Re} (z^4) }{|z|^6} + O(|z|^{-4}). \]

For $z$ close to the origin the error term $O(|z|^{-4})$ becomes significant.  Therefore, we use the McCrea-Whipple algorithm~\citep{MW} (see also~\citep{KS}) to determine $a(z)$ exactly for $|z|<100$.  This algorithm uses the exact identity
	\[ a(n+in) = \frac{4}{\pi} \sum_{k=1}^n \frac{1}{2k-1} \]
for $n \geq 0$, together with the relation $\Delta a = \delta_o$ and reflection symmetry across the real and imaginary axes to compute $a(z)$ recursively.  The values of $a(z)$ for $z \in \Z+i\Z$ are rational linear combinations of $1$ and $\frac{1}{\pi}$.

Now we can describe the function~$u_1$ that we used as input to the first step of our algorithm.  Let $r = \sqrt{N/\pi}$.  Approximating the term~$a((r,0))$ in~(\ref{quadraticplusgreen}) by~$\frac{2}{\pi} \log r + \kappa$, we set
 	\[ u_1(z) = \left\lfloor |z|^2 + r^2\,(2 \ln r - 1 + \pi \kappa - \pi a(z)) \right\rceil, \qquad |z|<100. \]
Here $\lfloor t \rceil = \floor{t+\frac12}$ denotes the closest integer to $t\in \R$.
For $|z| \geq 100$ we use the asymptotic expansion for $a(z)$ in (\ref{quadraticplusgreen}), which gives
\begin{equation}
 u_1(z) = \left \lfloor |z|^2 + r^2 \left( 2 \ln \frac{r}{|z|} - 1 + \frac{\mbox{Re}(z^4)} { 6\,|z|^6}\right) \right \rceil^+, \qquad |z| \geq 100,
 \label{eq:odoapprox}
\end{equation}
where $t^+ := \max(t,0)$.
%Experimentally, subtracting~$1$ gives roughly 20\% speedup for large $N$.
Including more terms of the asymptotic expansion of~$a(z)$ from~\cite{KS} improves the approximation very slightly, but increases the overall runtime.

\begin{figure}[tbp]
    \centering
    \subfloat[$N=100000$.]{
        \drawblob{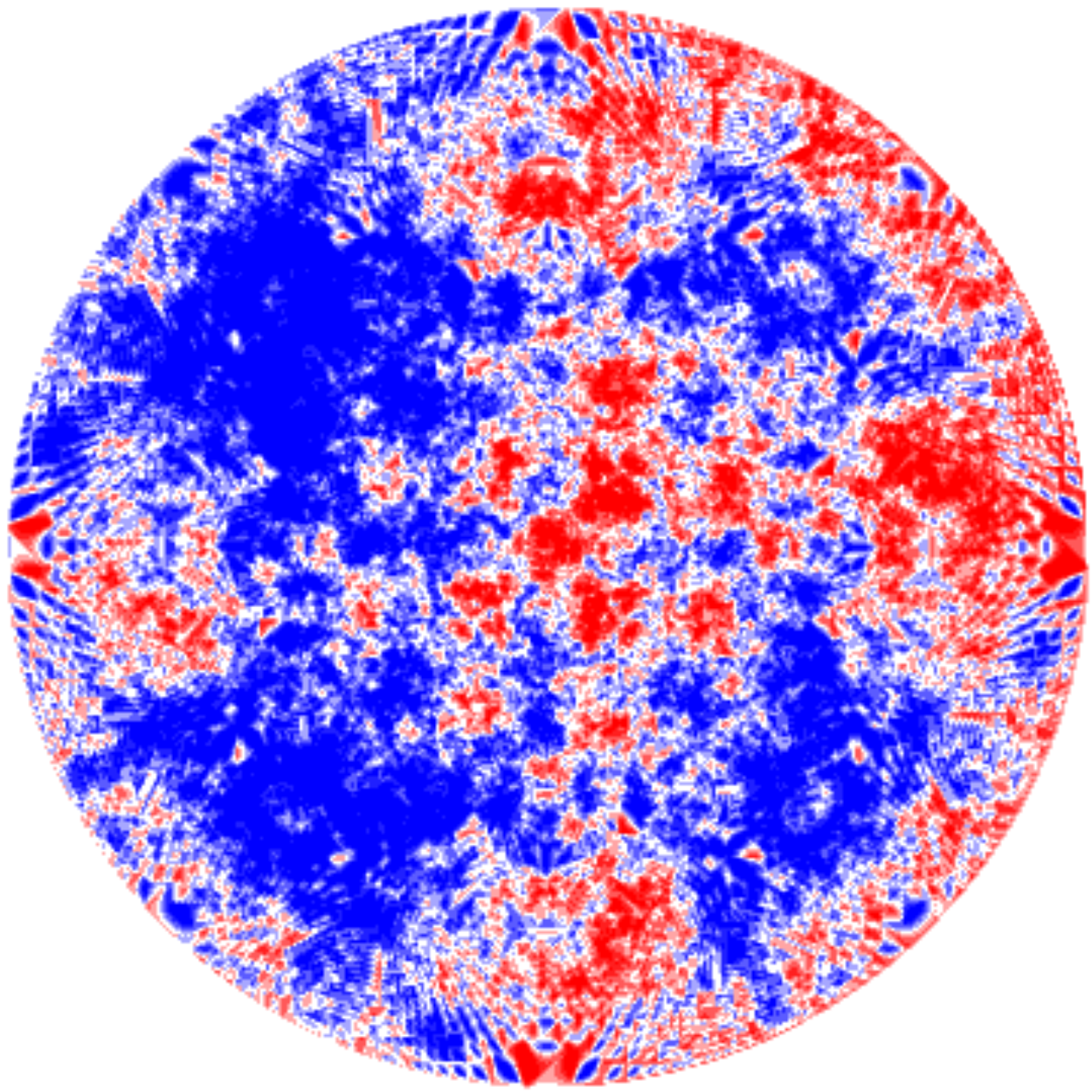}
    }
    \subfloat[$N=100100$.]{
        \drawblob{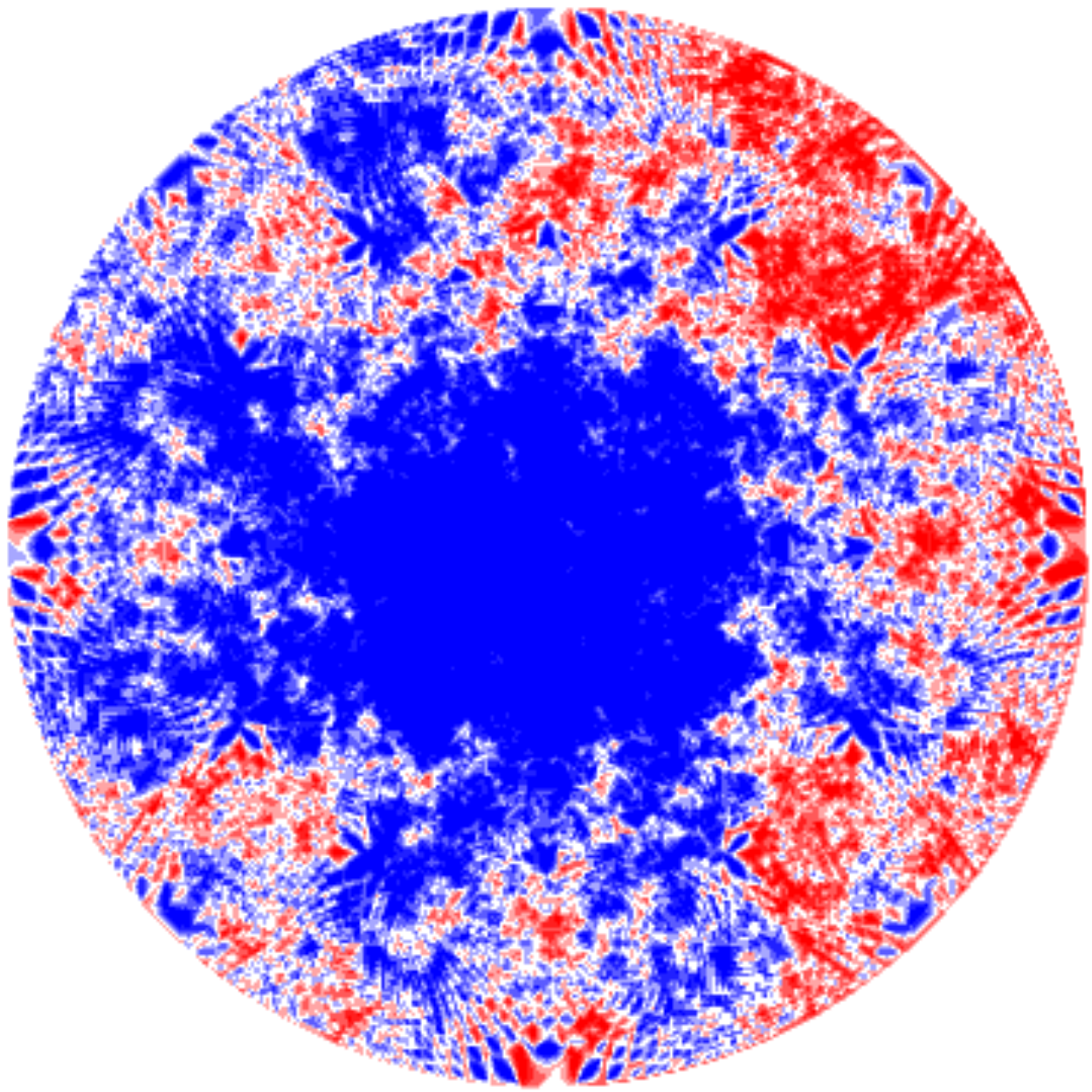}
    }
    \subfloat[$N=100200$.]{
        \drawblob{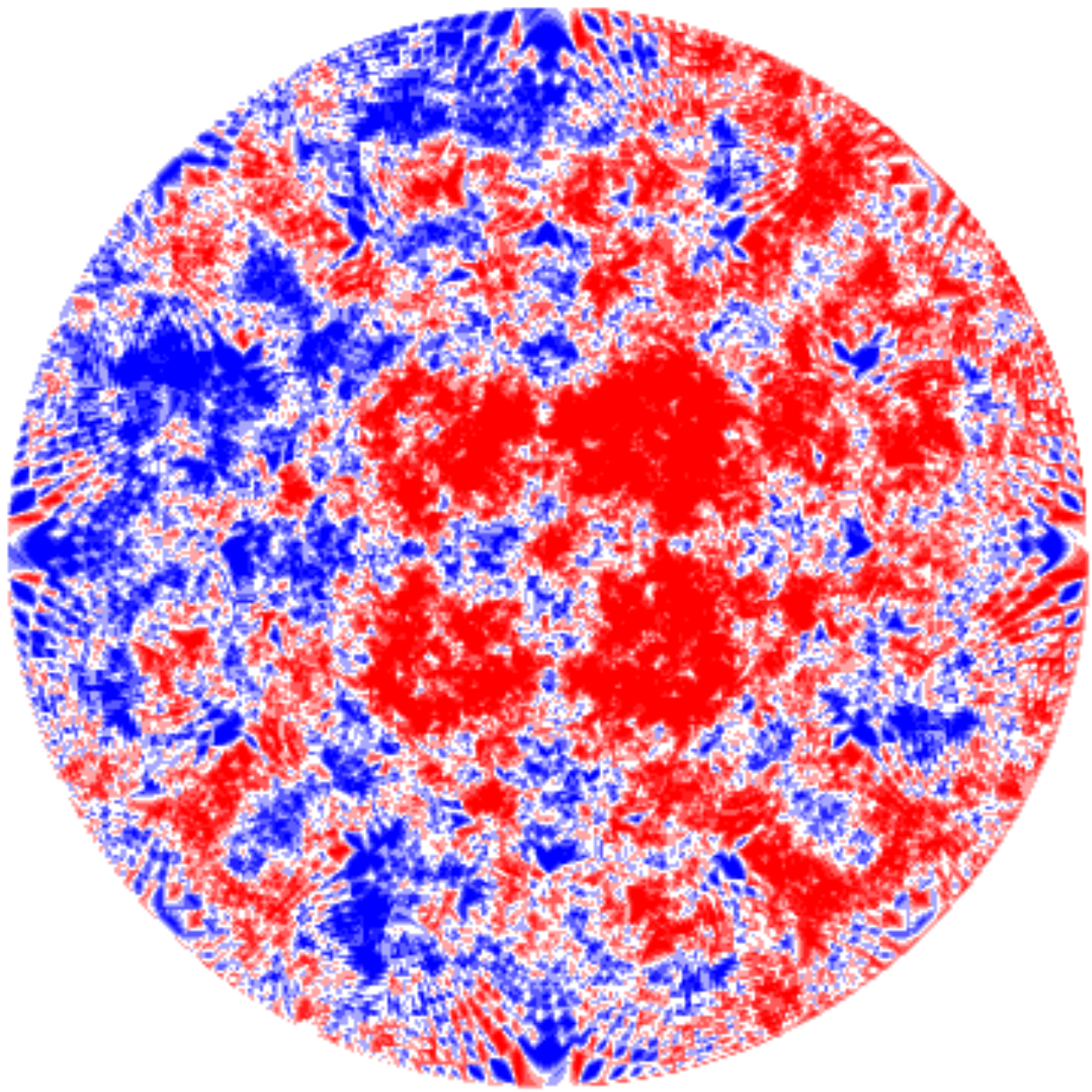}
    }
    \caption{Classic rotor router aggregation with counterclockwise rotor sequence.
        The pictures show the quality of the odometer approximation
        for different values of~$N$,
        as measured by the difference $u_1-u$.
        The site~$x$ is colored blue if $u_1(x)>u(x)$, red if $u_1(x)<u(x)$, and white if $u_1(x)=u(x)$.
        The dramatic dependence on~$N$ suggests that our approximation~$u_1$ captures substantially all of the large-scale regular structure in~$u$.}
%    \vspace*{-\baselineskip}
    \label{fig:ododiff_afterapprox}
\end{figure}

%%%%%%%%%%%%%%%%%%%%%%%%%%%%%%%%%%%%%%%%%%%%%%%%%%%%%%%%%%%%%%%%%%%%%%%%%%%%
%%%%%%%%%%%%%%%%%%%%%%%%%%%%%%%%%%%%%%%%%%%%%%%%%%%%%%%%%%%%%%%%%%%%%%%%%%%%

\section{Experimental Results}
\label{sec:models}

We implemented our algorithm for three different growth models in~$\Z^2$: rotor-router aggregation, IDLA, and a hybrid of the two which we call ``low-discrepancy random stack.''  In this section we discuss some details of the implementation, comment on the observed runtime, and present our findings on the fluctuations of the cluster $A_N$ from circularity for large $N$.

As a basis for comparison to our algorithm, consider the time it takes to compute the occupied cluster $A_N$ for rotor-router aggregation by the traditional method of firing one vertex at a time.  If $z_1,\ldots,z_N \in \Z^2$ are the locations of the $N$ chips, define the \emph{quadratic weight} $Q(\mathbf{z}) = \sum_{i=1}^N |z_i|^2$, where $|(x,y)|=(x^2+y^2)^{1/2}$ is the Euclidean norm.  Firing a given vertex~$z$ four times results in exactly one chip being sent to each of the four neighbors $z\pm e_1$, $z\pm e_2$.  The net effect of these four firings on the quadratic weight is to increase~$Q$ by
	\[ |z+e_1|^2 + |z-e_1|^2 + |z+e_2|^2 + |z-e_2|^2 - 4|z|^2 = 4. \]
Thus, the total number of firings needed to produce the final occupied cluster~$A_N$ is approximately~$\sum_{z \in A_N} |z|^2$.  Since $A_N$ is close to a disk of area~$N$, this sum is about~$N^2/2\pi$.

Traditional step-by-step simulation therefore requires quadratic time to compute the occupied cluster.  Step-by-step simulation of IDLA also requires quadratic time, as observed in \cite{LBG,MM}.  We found experimentally that our algorithm ran in significantly shorter time: about $N \log N$ for the rotor-router model (\tabref{classic}), and about $N^{1.5}$ for IDLA (\tabref{IDLA}).

\subsection{Implementation details}
\label{sec:exp:implementation}

We implemented the described algorithm in C++.  The source code is available from~\cite{hugerotor}.
It is easy to compute the odometer approximation for $z$ with $|z|\geq100$ 
according to \eq{odoapprox}.  However, the odometer approximation for 
$z$ with $|z|<100$ is less straightforward as the McCrea-Whipple algorithm~\citep{MW} 
is numerically very ill-conditioned.  In order to avoid escalating errors with fixed precision
floating point numbers, 
we used the computer algebra system Maple  to precompute $a(z)$ as a rational linear combination of $1$ and $\frac{1}{\pi}$ for $|z|<100$.

For the annihilation step described in \secref{thealgorithm}, we used a multiscale approach to cancel out hills and holes efficiently.  More specifically, let $L_1,L_2,\ldots$ be an exponentially growing sequence of integers. For each $i \geq 1$ do
		\begin{itemize}
		\item Substep $i$: fire each hill / unfire each hole until it either cancels out or reaches a site in $G_i := (L_i \Z \times \Z) \cup (\Z \times L_i \Z)$.
		\end{itemize}
%	For each $L_i$, $i\in\N$, (un)fire each hill or hole until it reaches some
%	vertex $(x_1,x_2)\in\Z^2$ with $x_1\in L\Z$	or $x_2\in L\Z$.
We used $L_1=1$ and $L_{i+1}=\lceil 1.9\,L_i \rceil$ for $i\geq1$.  Experimentally, the choice of $1.9$ resulted in the fastest run time.  During each substep $i$, we scan the grid and for each site $z \notin G_i$, if $z$ is a hill, fire it until it is no longer a hill; if $z$ is a hole, unfire it until it is no longer a hole.  We repeat this scanning procedure until no hills or holes remain outside of $G_i$.  The result is that a large number of hills and holes meet and cancel each other out, while the remainder are swept into the much sparser set $G_i$.  We then proceed to substep $i+1$, stopping when $L_i$ exceeds the diameter of the set of sites that absorb a chip.  At this stage we perform a final substep with $G_i = \emptyset$: in other words, repeatedly scan the grid, firing hills and unfiring holes with no restrictions on their location.  When no more hills or holes remain, we proceed to the reverse cycle-popping phase described in \secref{thealgorithm}. 

\begin{figure}[tbp]
    \centering
    \subfloat[$N=100000$.]{
        \drawblob{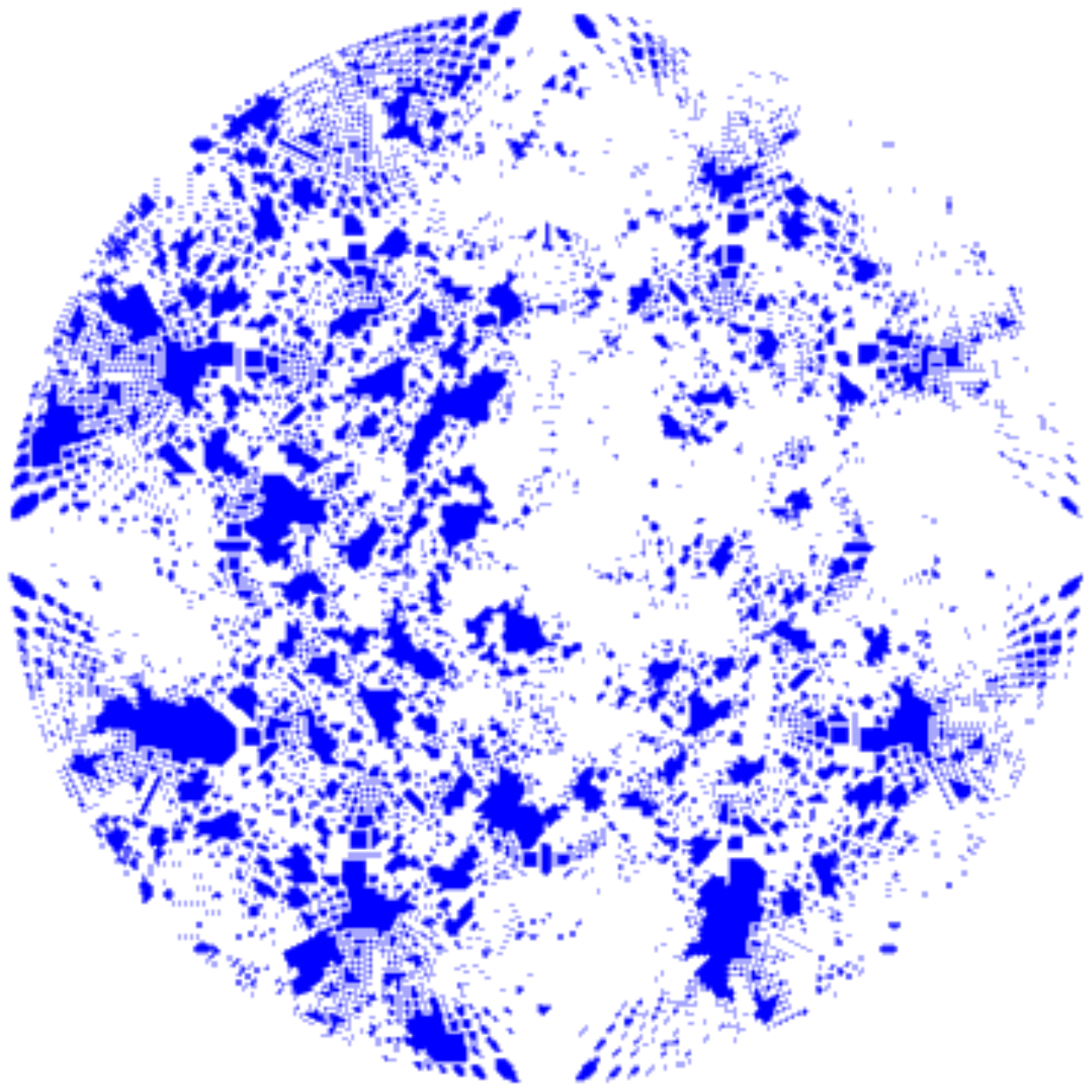}
    }
    \subfloat[$N=100100$.]{
        \drawblob{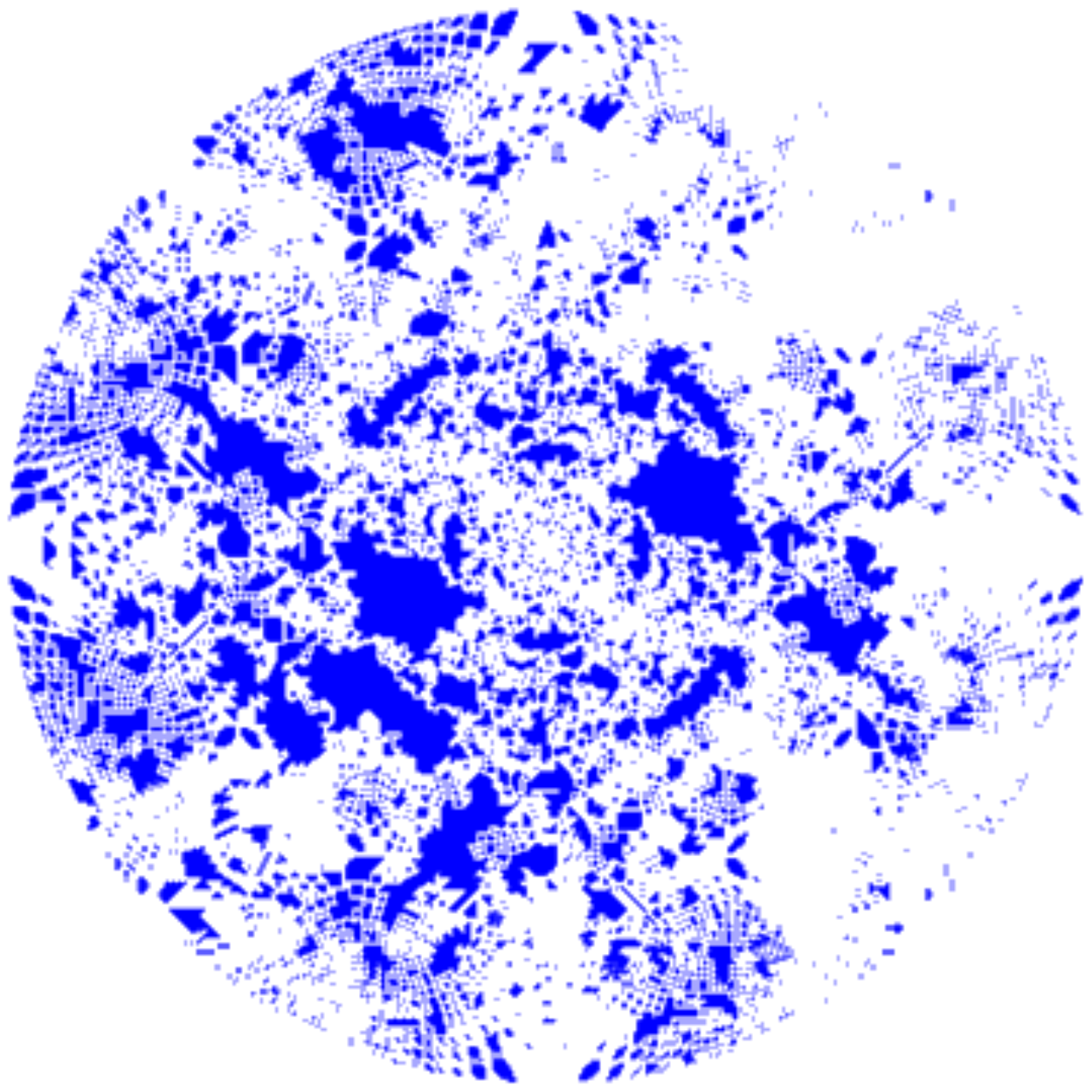}
    }
    \subfloat[$N=100200$.]{
        \drawblob{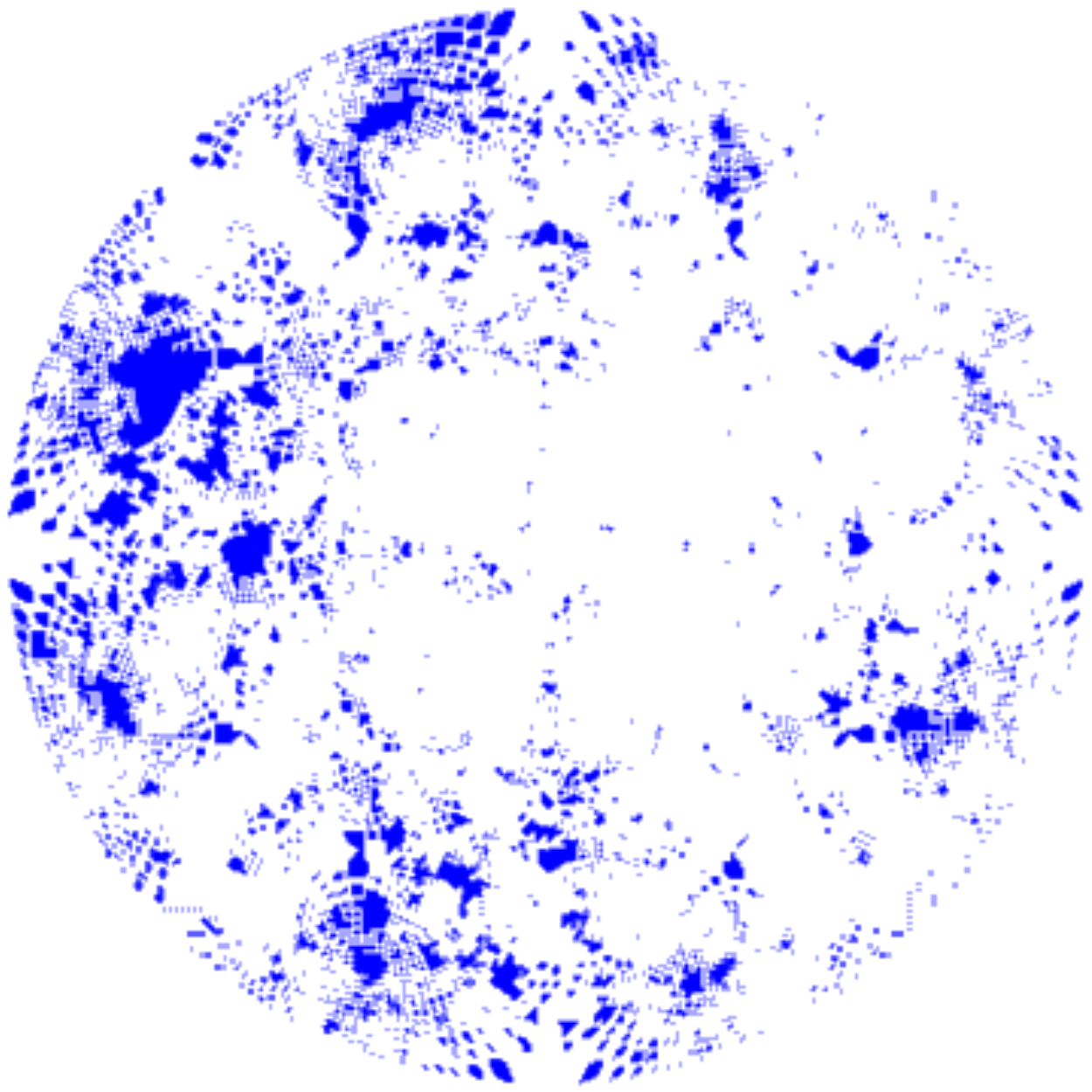}
    }
    \caption{Classic rotor router aggregation with counterclockwise rotor sequence.
        The pictures show the quality of the odometer approximation
        after the annihilation phase, for different values of~$N$, as measured by the difference $u_2-u$.
    The site~$x$ is colored blue if $u_2(x)>u(x)$, white if $u_2(x)=u(x)$.
    Note that after annihilation, there are no longer any sites satisfying $u_2(x)<u(x)$.
    The remaining odometer difference also shows how many cycles are then popped in
    the last phase of our algorithm.
    The darker the color, the more cycles run through this location.
    }
%    \vspace*{-\baselineskip}
    \label{fig:ododiff_afterannihilation}
\end{figure}

Our rotor-router calculation (\textsection\ref{sec:exp:RR}) was performed on
%a Sun Fire X4600 M2 Server with
%256 GB main memory using all
%four AMD Opteron 8222 processors.
a Fujitsu RX600S5 server with
four Xeon X7550 processors
and 2048 GB main memory.   Our IDLA calculations (\textsection\ref{sec:exp:IDLA}) were performed on a cluster of 96 Sun Fire V20z with AMD Opteron 250 processors.  For IDLA, our method depends strongly on the availability of a high-quality pseudorandom number generator.  
We used the cryptographically secure generator \emph{Advanced Encryption Standard} (AES)~\citep{AES}, which is the official successor of the well-known \emph{Data Encryption Standard} (DES).
%With key lengths of 192 or 256 bits AES is approved by the NSA to protect information classified as ``top secret''.
We used a key size of $256$ bits with the
Rijndael cipher implementation by Rijmen, Bosselaers and Barreto,
which is also part of OpenSSH.  

We observed that C's built-in \texttt{rand($\,\cdot\,$)} function, which has a small period, produces a noticeably smaller difference between inradius and outradius (about $13\%$ smaller for $N=2^{10}$).
We did not pursue this further to study whether this difference persists for larger values of $N$.

\subsection{Rotor-router aggregation}
\label{sec:exp:RR}

In the classic rotor-router model, the rotor stack is the cyclic sequence
of the four cardinal directions in counterclockwise order.
\tabref{classic} shows some statistical data of our computation.
%Given an odometer approximation $u_1$, it is easy to calculate the implied chip distribution $\sigma_1$ by \eq{RRR}.
The absolute error in our odometer approximation \[ \|u_1 - u\|_1 = \sum_x |u_1(x)-u(x)| \] appears to scale linearly with $N$.  This quantity is certainly a lower bound for the running time of our algorithm.  The measured runtimes indicate close-to-linear runtime behavior, which suggests that our multiscale approach to canceling out hills and holes is relatively efficient.
%The maximum difference $\max_x |u_1(x)-u(x)|$ appears to grow sublinearly with $N$.
%The last two columns of \tabref{classic} indicate that already the chip configuration $\sigma_1$ resulting directly from the odometer approximation $u_0$ is surprisingly close to the true chip configuration $\sigma_3$: in fact, $\max_x |\sigma_1(x)-\sigma_3(x)|\leq2$ for $N\leq 2^{30}$.

\figref{ododiff_afterapprox} depicts the odometer difference $u_1(x)-u(x)$ for three different values of~$N$.  \figref{ododiff_afterannihilation} depicts the odometer difference $u_2(x)-u(x)$ after the annihilation step of the algorithm.

\begin{table*}[tb]
\begin{center}
\scalebox{0.71}{
\begin{tabular}{r@{=}lr@{ }lccc@{}c@{\!}c@{\!}c}
\hline
\hline
\addlinespace[1mm]
\multicolumn{2}{c}{\bf Number of } &
\multicolumn{2}{c}{\multirow{2}{*}{\bf Runtime}} &
\multicolumn{2}{c}{\bf \underline{Radius Difference}} &
%\multicolumn{1}{c}{\bf cent.\ Rad. } &
\multicolumn{1}{c}{\multirow{2}{*}{$\bm{\|u_1-u\|_1 \big/ N}$}} &
\multicolumn{1}{c}{\multirow{2}{*}{$\bm{\max |u_1-u|}$}} &
\multicolumn{1}{c}{\bf highest } &
\multicolumn{1}{c}{\bf deepest }
\\
\multicolumn{2}{c}{\bf chips $\bm N$} &
&&
 \textbf{absolute} &
 \textbf{recentered} &
%\bf $/N$
&
%\bf $|u_1(x)-u(x)|$
&
\bf hill &
\bf hole \\
\addlinespace[1mm]
\hline
\hline
\addlinespace[1mm]

$2^{10}$ &          1,024 & 1.60 & ms    & 1.324 & 0.278 & 1.800 &   6 & 3 & -1 \\ 
%$2^{11}$ &          2,048 & 1.73 & ms    & 1.490 & 0.273 & 1.142 &   5 & 3 & -1 \\ 
$2^{12}$ &          4,096 & 2.58 & ms    & 1.523 & 0.138 & 3.370 &  10 & 3 & -1 \\ 
%$2^{13}$ &          8,192 & 3.61 & ms    & 1.606 & 0.235 & 3.599 &  11 & 3 & -1 \\ 
$2^{14}$ &         16,384 & 5.71 & ms    & 1.579 & 0.166 & 2.417 &  12 & 3 & -1 \\ 
%$2^{15}$ &         32,768 & 10.4 & ms    & 1.567 & 0.274 & 2.405 &  10 & 3 & -1 \\ 
$2^{16}$ &         65,536 & 21.5 & ms    & 1.611 & 0.429 & 4.461 &  17 & 3 & -1 \\ 
%$2^{17}$ &        131,072 & 35.1 & ms    & 1.652 & 0.279 & 2.486 &  16 & 3 & -1 \\ 
$2^{18}$ &        262,144 & 67.1 & ms    & 1.565 & 0.346 & 2.919 &  16 & 3 & -1 \\ 
%$2^{19}$ &        524,288 & 0.14 & sec   & 1.463 & 0.237 & 2.955 &  16 & 3 & -1 \\ 
$2^{20}$ &      1,048,576 & 0.26 & sec   & 1.642 & 0.362 & 4.323 &  23 & 3 & -1 \\ 
%$2^{21}$ &      2,097,152 & 0.50 & sec   & 1.591 & 0.297 & 3.594 &  26 & 3 & -1 \\ 
$2^{22}$ &      4,194,304 & 1.04 & sec   & 1.596 & 0.316 & 4.220 &  29 & 3 & -1 \\ 
%$2^{23}$ &      8,388,608 & 1.72 & sec   & 1.674 & 0.418 & 3.141 &  34 & 3 & -1 \\ 
$2^{24}$ &     16,777,216 & 3.53 & sec   & 1.614 & 0.396 & 3.974 &  45 & 3 & -1 \\ 
%$2^{25}$ &     33,554,432 & 7.16 & sec   & 1.538 & 0.229 & 3.142 &  43 & 3 & -1 \\ 
$2^{26}$ &     67,108,864 & 0.24 & min   & 1.658 & 0.368 & 4.695 &  62 & 3 & -1 \\ 
%$2^{27}$ &    134,217,728 & 0.46 & min   & 1.563 & 0.184 & 3.344 &  67 & 3 & -1 \\ 
$2^{28}$ &    268,435,456 & 0.98 & min   & 1.639 & 0.340 & 4.463 &  83 & 3 & -1 \\ 
%$2^{29}$ &    536,870,912 & 2.12 & min   & 1.679 & 0.402 & 4.495 &  85 & 3 & -1 \\ 
$2^{30}$ &  1,073,741,824 & 4.04 & min   & 1.635 & 0.414 & 4.309 &  91 & 3 & -1 \\ 
%$2^{31}$ &  2,147,483,648 & 6.86 & min   & 1.521 & 0.304 & 3.932 & 127 & 3 & -1 \\ 
$2^{32}$ &  4,294,967,296 & 0.28 & hours & 1.650 & 0.366 & 4.383 & 172 & 4 & -2 \\ 
%$2^{33}$ &  8,589,934,592 & 0.60 & hours & 1.701 & 0.323 & 7.555 & 174 & 6 & -4 \\ 
$2^{34}$ & 17,179,869,184 & 1.10 & hours & 1.688 & 0.439 & 4.734 & 252 & 11 & -8 \\ 
%$2^{35}$ & 34,359,738,368 & 1.76 & hours & 1.664 & 0.286 & 4.676 & 233 & 20 & -17 \\ 
$2^{36}$ & 68,719,476,736 & 3.80 & hours & 1.587 & 0.385 & 5.408 & 353 & 38 & -35 \\

\hline \hline
\end{tabular}}
\end{center}
\caption{Simulation results for classic rotor-router aggregation with counterclockwise rotor sequence.
    The given runtime is the total runtime of the calculation of one rotor-router
    aggregation of the given size on a Fujitsu RX600S5 server.
    %AMD Opteron processor 8222.
    The next two columns show the difference between the outradius and inradius
    of the occupied cluster $A_N$, measured with respect to the origin
    (``absolute'') and with respect to the putative center of mass
    $\big(\frac12, \frac12\big)$ (``recentered'').  The next two columns give
    two measurements of the error of our odometer approximation~$u_1$, the total
    absolute error and maximum pointwise error. In the last two columns,
    ``highest hill'' and ``deepest hole'' refer respectively to $\max_x
    \sigma_1(x)$ and $\min_x \sigma_1(x)$.}
\label{tab:classic}
\end{table*}

The asymptotic shape of rotor-router aggregation is a disk~\citep{LP08,LP09a}.  To measure how close $A_N$ is to a disk, we define the \emph{inradius} and \emph{outradius} of a set $A \subset \Z^2$ by
\[
    \inrad(A) = \min \{|x|\colon x \notin A \}
\]
and
\[
    \outrad(A) = \max \{|x|\colon x \in A\}.
\]
We then define
\[
    \diff(N) = \outrad(A_N) - \inrad(A_N).
\]
A natural question is whether this difference is bounded independent of $N$.	We certainly expect it to  increase much more slowly than the order $\log N$ observed for IDLA.

\citet{Kleber} calculated that $\diff(3\cdot 10^6)\approx1.6106$.  We can now extend the measurement of $\diff(N)$ up to $N = 2^{36} \approx 6.8 \cdot 10^{10}$ (\tabref{classic}, third column).
Our algorithm runs in less than four hours for this value of~$N$; by comparison, a step-by-step simulation of this size would take about $23000$ years on a computer with one billion operations per second.
In our implementation, the limiting factor is memory rather than time.

Up to dihedral symmetry, there are three different balanced period\nobreakdash-$4$ rotor sequences for~$\Z^2$: \rotseq{WENS}, \rotseq{WNSE}, and \rotseq{WNES}.  The notation $\rotseq{WENS}$ means that the first four rotors in each stack point respectively west, east, north and south.

\figref{radiusdiff:origin} shows the radius difference $\diff(N)$ for various $N$
for the three different rotor sequences.  As these values are rather noisy, we have also calculated and plotted
the averages
\begin{equation}
    \label{eq:avgdiff}
    \avgdiff(N) := \frac{1}{|I(N)|} \sum_{N' \in I(N)} \diff(N')
\end{equation}
with
\[
    I(N) = \begin{cases}
            \Big[\tfrac{N}{2},\tfrac{3N}{2}\Big] & \text{for $N\leq10^6$,}\\
            [N-5\cdot 10^5, N+5\cdot 10^5] & \text{for $N>10^6$.}
        \end{cases}
\]
Note that in \figref{radiusdiff:origin}, the radius difference
$\avgdiff(N)$ grows extremely slowly in~$N$.
In particular, it appears to be sublogarithmic.

\begin{figure}[tbp]
    \centering
    \subfloat[Radius difference around the origin.]{
        \label{fig:radiusdiff:origin}
        \includegraphics[width=.48\textwidth,clip]{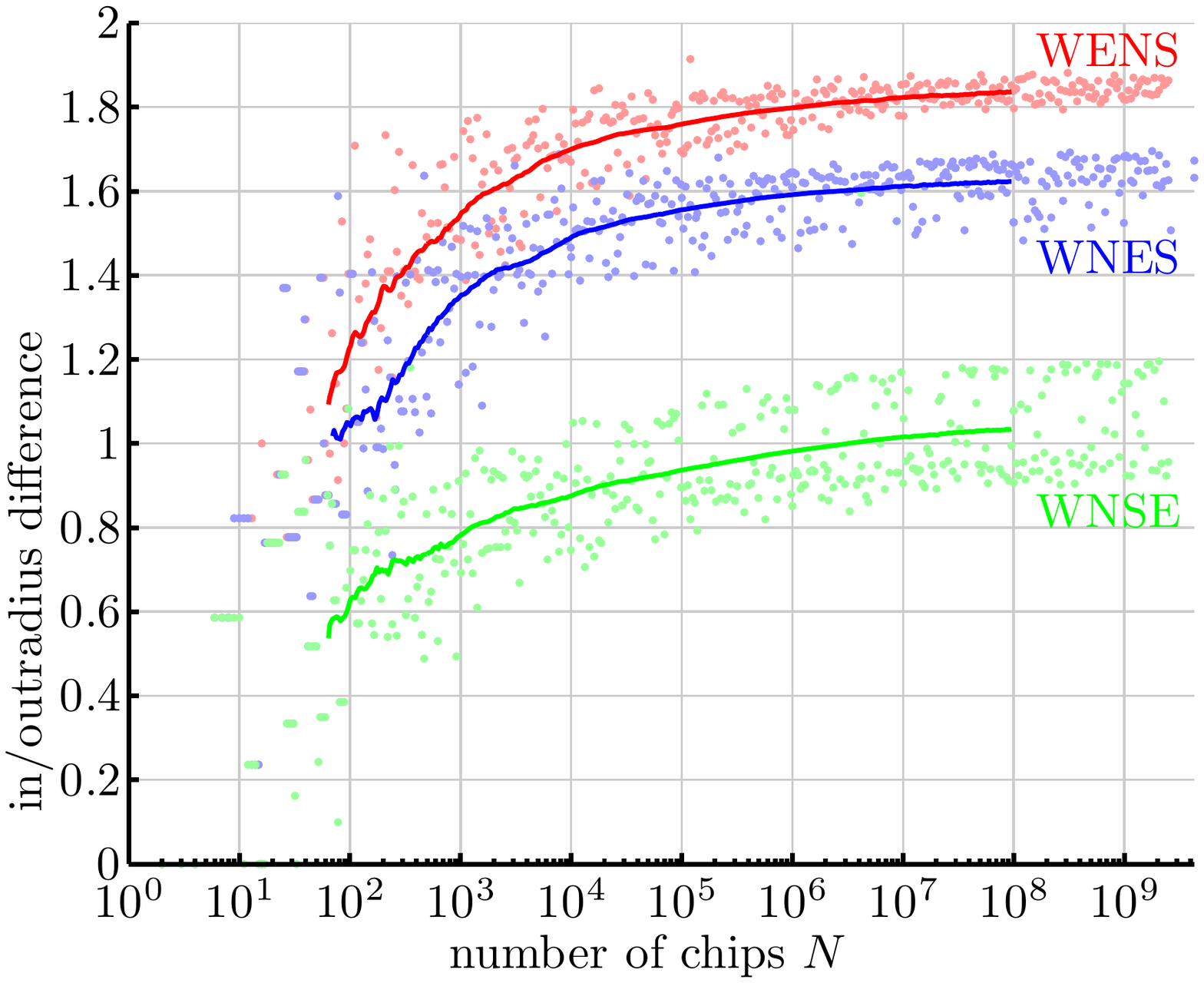}
    }
    \subfloat[Radius difference around putative center.]{
        \label{fig:radiusdiff:centered}
        \includegraphics[width=.48\textwidth,clip]{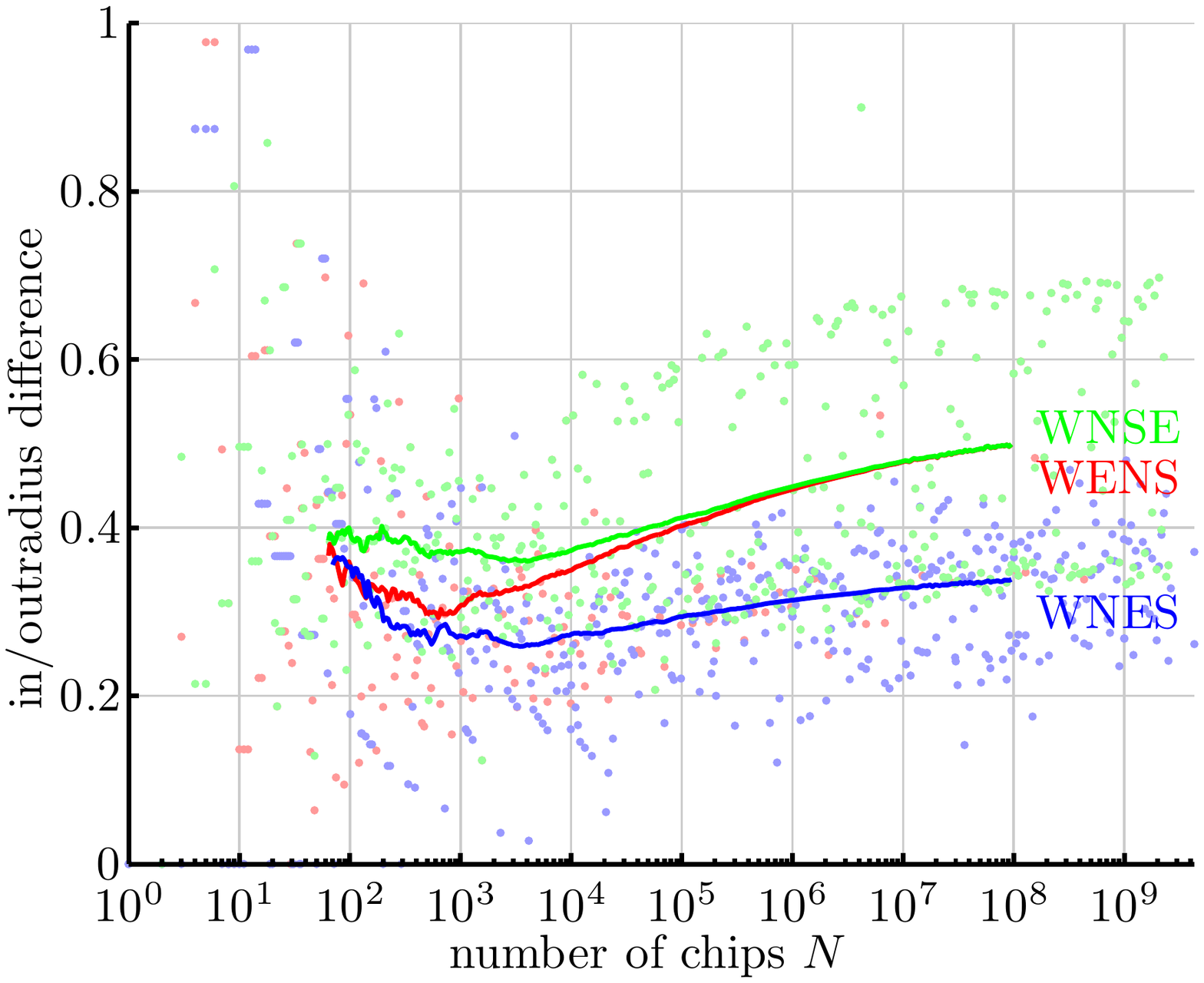}
    }
    \caption{Difference between the inradius and outradius of the rotor-router aggregate,
        for different numbers of chips~$N$.
        The single dots are individual values of $\diff(N)$ (left) and $\diff'(N)$ (right).
        The darker curves show the averages $\avgdiff(N)$ and $\avgdiff'(N)$
        as defined in \eqs{avgdiff}{avgdiffp}.
        (Color scheme: \rotseq{WNES}=blue, \rotseq{WNSE}=green, \rotseq{WENS}=red.)}
%    \vspace*{-\baselineskip}
    \label{fig:radiusdiff}
\end{figure}

%%%%%%%% Note that in the implementation it's down={0,1}, up={0,-1}
%%%%%%%% That's why down becomes north and up becoms south:
%%%%%%%% LDUR=WNSE (was WSNE)
%%%%%%%% LDRU=WNES (was WSEN)
%%%%%%%% LRDU=WENS (was WESN)

%%% now everything retrospective

We observe a systematic difference in behavior for the three different rotor sequences.
The observed radius differences are lowest for \rotseq{WNSE}, intermediate for \rotseq{WNES}, and highest for \rotseq{WENS}.  For example,
\[
    \avgdiff(10^8)\approx
        \begin{cases}
            1.034 & \text{for \rotseq{WNSE},}\\ % = LDUR
            1.623 & \text{for \rotseq{WNES},}\\ % = LDRU
            1.837 & \text{for \rotseq{WENS}.}\\ % = LRDU
        \end{cases}
\]
This difference can be partially explained by considering the center of mass of the aggregate.  Recall that our convention is ``retrospective'' (as opposed to ``prospective'') rotor
notation: that is, the rotor currently on top of the stack indicates where the last chip has
gone rather than where the next chip will go.
Hence for \rotseq{WNES} rotors, the first time each site fires it sends a chip north, the next time east, then south, then west.
As about $1/4$ of the sites end up
in each of the four rotor states, for \rotseq{WNES} rotors about half of the sites send
one more chip N than S, and (a different but overlapping)
half send one more chip E than W.
As a result, the center of mass of the set of occupied sites is close to $(1/2, 1/2)$.
For \rotseq{WENS} the center of mass is close to $(3/4,1/4)$,
and for \rotseq{WNSE} it's close to $(1/4,1/4)$.

In some sense, a better measure of circularity than $\diff(N)$ is the radius difference relative to the center of mass.  Thus we define
	\[ \diff'(N) = r_{out}(A_N-\mathbf{c}) - r_{in}(A_N-\mathbf{c}) \]
where $\mathbf{c}$ is one of $(1/2, 1/2)$, $(3/4,1/4)$, or $(1/4,1/4)$ chosen according to the rotor sequence used.  Let
\begin{equation}
    \label{eq:avgdiffp}
    \avgdiff'(N) := \frac{1}{|I(N)|} \sum_{N' \in I(N)} \diff'(N).
\end{equation}
These values are plotted for various~$N$ in \figref{radiusdiff:centered}.
We find
\[
    \avgdiff\,'(10^8)\approx
        \begin{cases}
            0.499 & \text{for \rotseq{WNSE} and \rotseq{WENS},}\\ % = LDUR/LRDU
            0.338 & \text{for \rotseq{WNES}.}\\ % = LDRU
        \end{cases}
\]
The differences are now significantly smaller, and the two non-cyclic rotor sequences \rotseq{WNSE} and \rotseq{WENS} have nearly the same radius difference for large $N$.   To see why, note that \rotseq{WENS} is obtained from \rotseq{WNSE} by a shift in the stacks (to \rotseq{EWNS}) followed by interchanging the directions east and west.  Thus the observed difference in $\diff(N)$ between these two rotor sequences is entirely due to the effect of the initial condition of rotors primed to send chips west.  By adjusting for the center of mass, we have largely removed this effect in $\diff'(N)$.

%There is currently no good explanation for the specific values found.
%% L: do we expect one?
%% T: well - there could be some neat limit or so for it...

\subsection{Internal Diffusion Limited Aggregation (IDLA)}
\label{sec:exp:IDLA}
In IDLA, the rotor directions~$\rho_k(x)$ for $x\in\Z^2$ and $k\in\Z$ are chosen independently and uniformly at random from among the four cardinal directions.
In the course of firing and unfiring during steps~2 and~3 of our algorithm, the same rotor $\rho_k(x)$ may be requested several times.  Therefore, we need to be able to generate the same pseudorandom value for $\rho_k(x)$ each time it is used.
Generating and storing all rotors $\rho_k(x)$ for all $x$ and all $1 \leq k \leq u_1(x)$ is out of the question, however, since it would cost $\Omega(N^2)$ time and space.

\citet{MM} encountered the same issue in developing a fast parallel algorithm for IDLA.  Rather than store all of the random choices, they chose to store only certain seed values for the random number generator and generate random walk steps online as needed.  Next we describe how to adapt this idea to our setting for fast serial computation of IDLA.

\begin{table*}[tb]
\begin{center}
\scalebox{0.71}{
\begin{tabular}{r@{=}lr@{ }lr@{$\pm$}lr@{$\pm$}lr@{$\pm$}lr}
\hline
\hline
\addlinespace[1mm]
\multicolumn{2}{c}{\bf Number of } &
\multicolumn{2}{c}{\bf Average} &
\multicolumn{2}{c}{\bf Radius} &
\multicolumn{2}{c}{\multirow{2}{*}{\bf $\bm{\|u_1-u\|_1/N^{3/2}}$}} &
\multicolumn{2}{c}{\multirow{2}{*}{\bf $\bm{\max |u_1-u|}$}} &
\multicolumn{1}{c}{\bf Number}
\\
\multicolumn{2}{c}{\bf chips $\bm N$} &
\multicolumn{2}{c}{\bf Runtime} &
\multicolumn{2}{c}{\bf Difference} &
\multicolumn{2}{c}{} &
\multicolumn{2}{c}{} &
\multicolumn{1}{c}{\bf of runs}
\\
\addlinespace[1mm]
\hline
\hline
\addlinespace[1mm]

$2^{10}$ &       1,024 & 9.80 & ms    & 3.198 & 0.569 & 0.490 & 0.057 &     134 &     27 &  $10^{6}$ \\ 
$2^{11}$ &       2,048 & 26.9 & ms    & 3.569 & 0.547 & 0.516 & 0.054 &     220 &     41 &  $10^{6}$ \\ 
$2^{12}$ &       4,096 & 73.9 & ms    & 3.948 & 0.553 & 0.541 & 0.051 &     355 &     62 &  $10^{6}$ \\ 
$2^{13}$ &       8,192 & 0.21 & sec   & 4.307 & 0.556 & 0.565 & 0.049 &     568 &     93 &  $10^{6}$ \\ 
$2^{14}$ &      16,384 & 0.62 & sec   & 4.664 & 0.566 & 0.588 & 0.047 &     901 &    139 &  $10^{6}$ \\ 
$2^{15}$ &      32,768 & 1.81 & sec   & 5.027 & 0.578 & 0.610 & 0.045 &   1,418 &    207 &  $10^{6}$ \\ 
$2^{16}$ &      65,536 & 5.29 & sec   & 5.393 & 0.578 & 0.631 & 0.043 &   2,216 &    307 &  $10^{6}$ \\ 
$2^{17}$ &     131,072 & 0.26 & min   & 5.763 & 0.584 & 0.652 & 0.042 &   3,443 &    456 &  $10^{5}$ \\ 
$2^{18}$ &     262,144 & 0.76 & min   & 6.125 & 0.588 & 0.673 & 0.041 &   5,317 &    672 &  $10^{5}$ \\ 
$2^{19}$ &     524,288 & 2.26 & min   & 6.493 & 0.593 & 0.692 & 0.039 &   8,179 &    985 &  $10^{5}$ \\ 
$2^{20}$ &   1,048,576 & 6.74 & min   & 6.858 & 0.594 & 0.711 & 0.038 &  12,522 &  1,455 &  $10^{5}$ \\ 
$2^{21}$ &   2,097,152 & 0.34 & hours & 7.222 & 0.600 & 0.730 & 0.038 &  19,085 &  2,131 & $6 \cdot 10^{4}$ \\ 
$2^{22}$ &   4,194,304 & 1.01 & hours & 7.596 & 0.600 & 0.748 & 0.036 &  29,007 &  3,109 & $6 \cdot 10^{4}$ \\ 
$2^{23}$ &   8,388,608 & 3.95 & hours & 7.968 & 0.601 & 0.767 & 0.036 &  44,007 &  4,471 & $3 \cdot 10^{3}$ \\ 
$2^{24}$ &  16,777,216 & 14.9 & hours & 8.319 & 0.605 & 0.783 & 0.035 &  66,418 &  6,763 & $3 \cdot 10^{3}$ \\ 
$2^{25}$ &  33,554,432 & 44.3 & hours & 8.699 & 0.575 & 0.801 & 0.033 &  99,667 & 10,192 & $4 \cdot 10^{2}$ \\

\hline \hline
\end{tabular}}
\end{center}
\caption{Simulation results for IDLA.  
    The given runtime is the total time taken for the calculation of one IDLA
    cluster of the given size on a single core.
    To fit within 4 GB (8 GB for $N=2^{25}$) main memory, we used
    $\lambda=0$ for $N\leq2^{22}$,
    $\lambda=2$ for $N=2^{23}$,
    $\lambda=5$ for $N\geq 2^{24}$.
    The next column shows the difference between the outradius and inradius of the occupied cluster $A_N$.
    The fourth and fifth columns give two measurements of the error of our odometer approximation~$u_1$,
    the total absolute error and maximum pointwise error.
     The values shown are averages and standard deviations over many independent trials;  the last column shows the number of trials.
%    The values shown are averages and standard deviations of
%    3.000.000, 300.000, 10.000, 400, or 40 runs for
%    $N\leq2^{14}$,
%    $2^{15}\leq N\leq2^{17}$,
%    $2^{18}\leq N\leq2^{22}$,
%    $2^{23}\leq N\leq2^{25}$,
%    or
%    $N=2^{26}$, respectively.
}
\label{tab:IDLA}
\end{table*}

The AES pseudorandom number generator takes as input a block of 128 bits and ``encrypts'' it, outputting a block of 128 pseudorandom bits.  We interpret the output block as the binary expansion of a number in the interval $[0,1)$.
Let $\rnd(b)$ be the pseudorandom number generated from input block~$b$.
%, normalized so that it lies in the interval $[0,1)$.
Let
	\[ U_k(x) = \rnd(\block(x,k,a)), \]
where $\block(x,k,a)$ is a simple deterministic function that assumes distinct values 
for each triple $(x,k,a)$ of site~$x$, odometer value~$1 \leq k \leq K$, and integer $1 \leq a \leq A$.  The integer~$a$ is fixed for each run of the algorithm, and~$A$ is the total number of runs of the algorithm; this way, each run generates an independent IDLA cluster.  The bound~$K$ is chosen to be safely larger than the maximal odometer value~$u_1(o) \approx 2 r^2 \ln r$.

%In order to avoid the additional memory consumption,
%we do not choose the $\block(x,k,a)$ at random and store it,
%but set
Writing $\uparrow$, $\rightarrow$, $\downarrow$, $\leftarrow$
for the four outgoing edges from site $x$, we set
\begin{equation}
    \label{eq:IDLArho}
    \rho_k(x):=
    \begin{cases}
    \uparrow    & \text{ if }  0  \leq U_k(x) < 1/4, \\
    \rightarrow & \text{ if } 1/4 \leq U_k(x) < 1/2, \\
    \downarrow  & \text{ if } 1/2 \leq U_k(x) < 3/4, \\
    \leftarrow  & \text{ if } 3/4 \leq U_k(x) < 1. \\
    \end{cases}
\end{equation}

The first step of the algorithm described in \secref{thealgorithm} is to calculate~$\sigma_1$ from the odometer approximation~$u_1$.  In this calculation, the definition of $R(e,n)$ given in \eq{rotorfrequencies} involves evaluating $\rho_k(x)$
for all $1 \leq k \leq n$.  As this is much too expensive, we instead use the fact that $R(e,n)$ is a random variable with the $\Binomial(n,1/4)$ distribution.  In steps 2 and 3 of the algorithm, we need to sample some individual rotors $\rho_k(x)$, but typically not too many: on the order of $\sqrt{u_1(x)}$.  The distribution of these rotors depends on the binomials already drawn.  We think of first populating an urn with balls of $4$ colors corresponding to the directions $\uparrow$, $\rightarrow$, $\downarrow$, $\leftarrow$.  When the algorithm asks for an individual rotor, we draw a ball at random from the urn using our knowledge of how many balls of each color remain.

This approach works well for small and moderate system sizes, but for large~$N$ it is too memory-intensive.  The memory usage comes from the need to store the rotors previously drawn in order to keep track of how many balls of each color remain in the urn.  Note that keeping a count does not suffice, because the algorithm may request a single rotor multiple times.

\begin{figure}[tbp]
    \centering
    \subfloat[Sample variance of the first $100$ moments.]{
        \label{fig:moments:1}
        \includegraphics[height=6.6cm,clip]{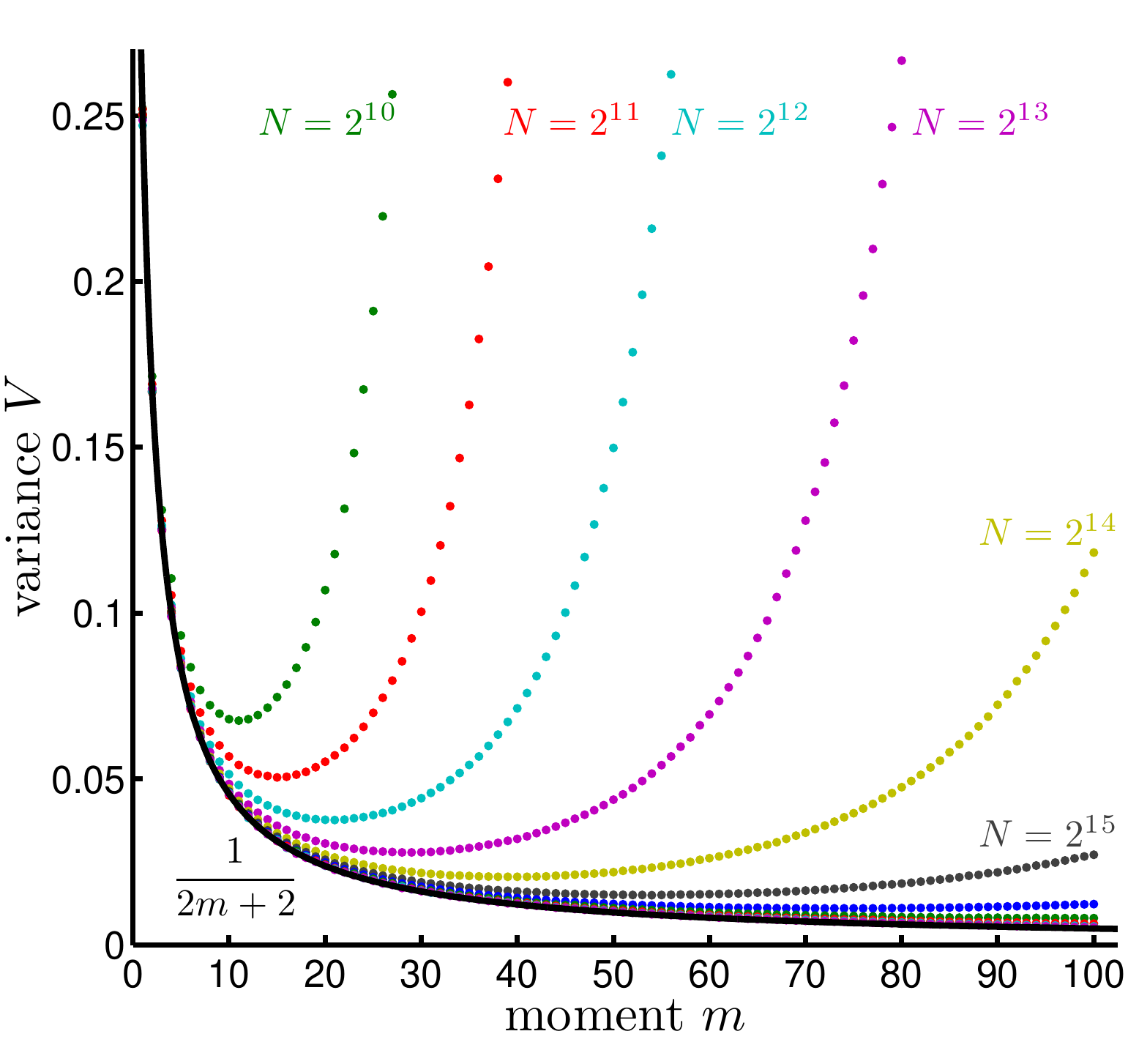}
    }
    \subfloat[Histograms of the first three moments.]{
        \label{fig:moments:hist}
        \includegraphics[height=6.6cm,clip]{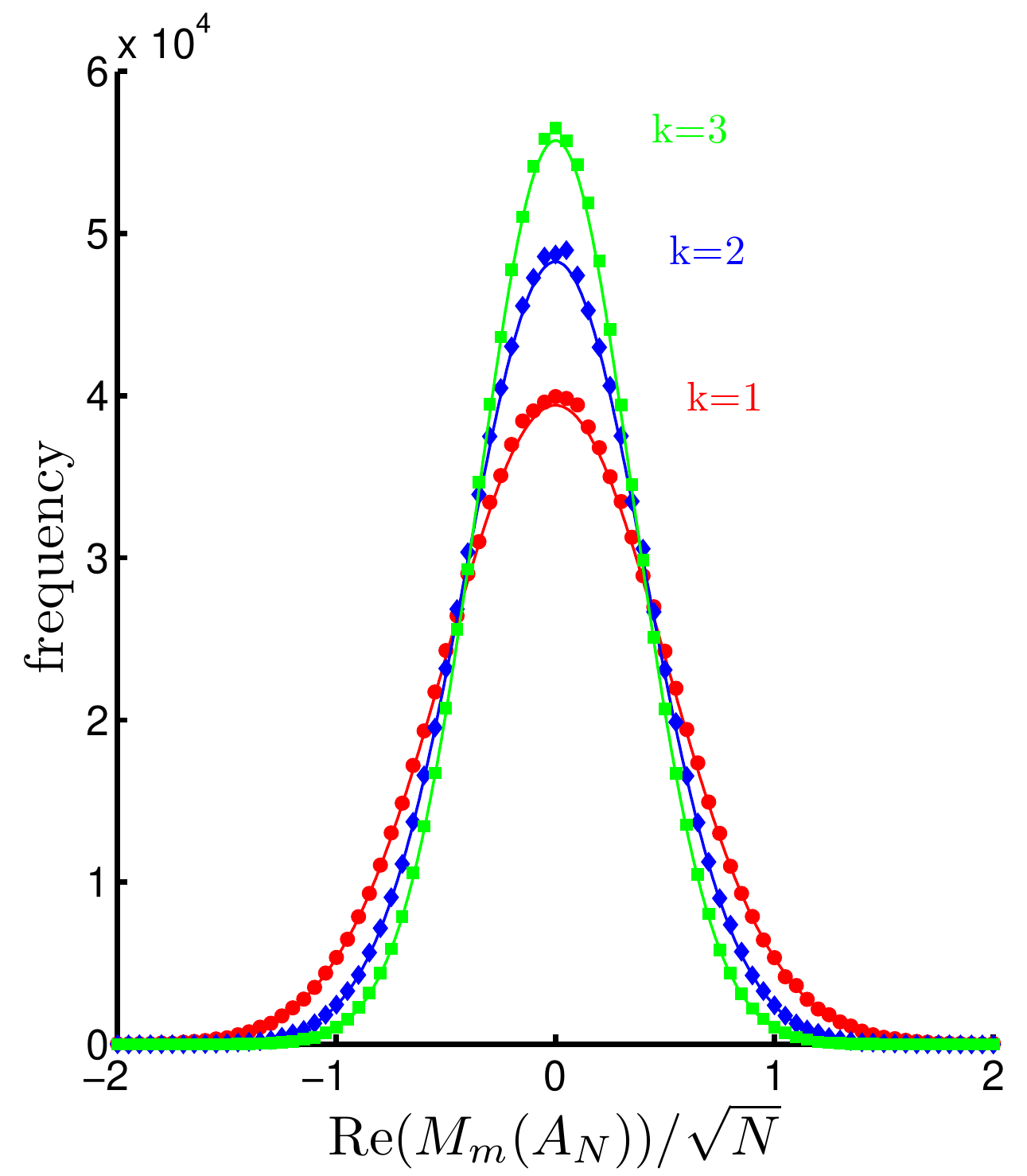}
    }
    \caption{Complex moments of the IDLA cluster.  Left: The sample variance $V(m) = \EE\, \Re (M_m(A_{N})/\sqrt{N})^2$ of the real parts of the first $100$ moments, for $N=2^{10},\ldots,2^{20}$.  As $N$ increases, the variance of the real part of the $m$-th moment approaches $1/(2m+2)$, in agreement with the results of~\cite{JLS2}.  Right: Histogram of the real part of the first three moments for $N=2^{16}$.
    The histogram shows 1,000,000 independent runs in bins of size 0.05.
    Data for the imaginary parts is similar.}
%    \vspace*{-\baselineskip}
    \label{fig:moments}
\end{figure}

Fix a parameter $\lambda \geq 0$ representing the tradeoff between time and memory.  A larger value of $\lambda$ will result in saving memory at the cost of additional time.  Let
	\[ f(x) = \left(u_1(x) - \lambda \sqrt{u_1(x)} \right)^+. \]
For each site $x$ with $f(x)>0$, we sample three binomial random variables
\begin{align*}
    B &\sim \Binomial\big(f(x) ,\,\tfrac14\big), \\
	B' &\sim \Binomial\big(f(x)-B,\,\tfrac13\big), \\
	B'' &\sim \Binomial\big(f(x)-B-B',\,\tfrac12\big).
\end{align*}
%Writing $\uparrow$, $\rightarrow$, $\downarrow$, $\leftarrow$ for the four outgoing edges from $x$,
We then set
\begin{align*}
    R(\uparrow,f(x)) &= B \\
    R(\rightarrow,f(x)) &= B' \\
    R(\downarrow,f(x)) &= B'' \\
    R(\leftarrow,f(x)) &= f(x)-B-B'-B''.
\end{align*}

Next, to implement step~$1$ of the algorithm described in \secref{thealgorithm}, we need to know $R(e,u_1(x))$.  So we compute
	\[ R(e,u_1(x)) = R(e,f(x)) + \# \{f(x) < k \leq u_1(x) \mid \rho_k(x)=e \}. \]
Note that if $\lambda$ is large, then this calculation is expensive in time, since it involves calling the pseudorandom number generator to draw as many as $\lambda \sqrt{u_1(x)}$ rotors
	\[ \rho_k(x),  \qquad f(x) < k \leq u_1(x) \]
using equation (\ref{eq:IDLArho}).  But, crucially, these rotors do not need to be stored.

During steps~2 and~3 of the algorithm, we sample any rotors $\rho_k(x)$ for $k>f(x)$ as needed using (\ref{eq:IDLArho}).
Rotors $\rho_k(x)$ for $k \leq f(x)$ can be sampled online as needed according to the distribution
\begin{equation*}
    \rho_{k}(x):=
    \begin{cases}
    \uparrow    & \text{ if }
    		    U_k(x) \in \Big[0,
                      \tfrac{R(\uparrow,k)}{k} \Big), \\[1.2mm]
    \rightarrow & \text{ if }
                      U_k(x) \in \Big[ \tfrac{R(\uparrow,k)}{k},
                      \tfrac{R(\uparrow,k)+R(\rightarrow,k)}{k} \Big), \\[1.2mm]
    \downarrow  & \text{ if }
    		    U_k(x) \in \Big[
                      \tfrac{R(\uparrow,k)+R(\rightarrow,k)}{k},
                      \tfrac{R(\uparrow,k)+R(\rightarrow,k)+R(\downarrow,k)}{k} \Big), \\[1.2mm]
    \leftarrow  & \text{ if }
    		    U_k(x) \in \Big[
                      \tfrac{R(\uparrow,k)+R(\rightarrow,k)+R(\downarrow,k)}{k},1 \Big).
    \end{cases}
\end{equation*}
Initially, the values $R(e,k)$ are known only for $k=f(x)$.  We generate the rotors $\rho_k(x)$ as needed in order of decreasing index~$k$, starting with $k=f(x)$.  Upon generating a new rotor $\rho_k(x)=e$, we inductively set
\[
    R(e,k-1) = R(e,k) -1
\]
and $R(e',k-1)= R(e',k)$ for
$e' \in \{\uparrow,\rightarrow,\downarrow,\leftarrow\} - \{e\}$.
These values specify the distribution for the next rotor $\rho_{k-1}(x)$
in case it is needed later.

The results of our large-scale simulations of IDLA are summarized in \tabref{IDLA}, extending the experiments of~\citet{MM} ($N \leq 10^{5.25}$ with $100$ trials) to over $10^6$ trials for $N \leq 2^{16}$ and over $300$ trials for $N \leq 2^{25} \approx 10^{7.5}$.
 The observed runtime of our algorithm for IDLA is about $N^{1.5}$;
 in contrast, building an IDLA cluster of size $N$ by serial simulation of $N$ random walks takes expected time order~$N^2$ (cf.~\cite[Fig.~3]{MM}).

\begin{figure}[tbp]
    \centering
    \subfloat[IDLA.]{
        \label{fig:radiusdiff2:IDLA}
        \includegraphics[width=.48\textwidth,clip]{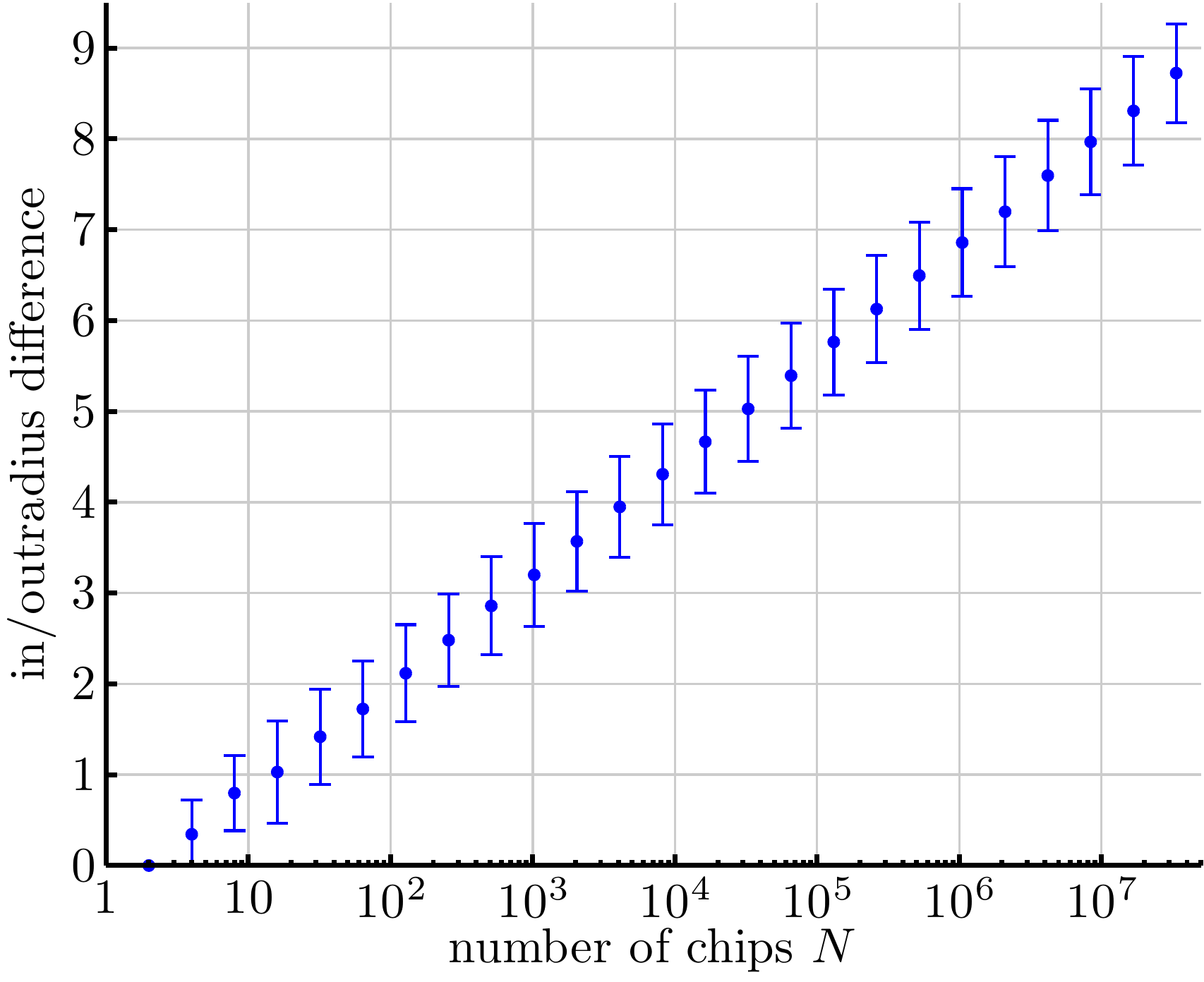}
    }
    \subfloat[Low-discrepancy random stack.]{
        \label{fig:radiusdiff2:rndstack}
        \includegraphics[width=.48\textwidth,clip]{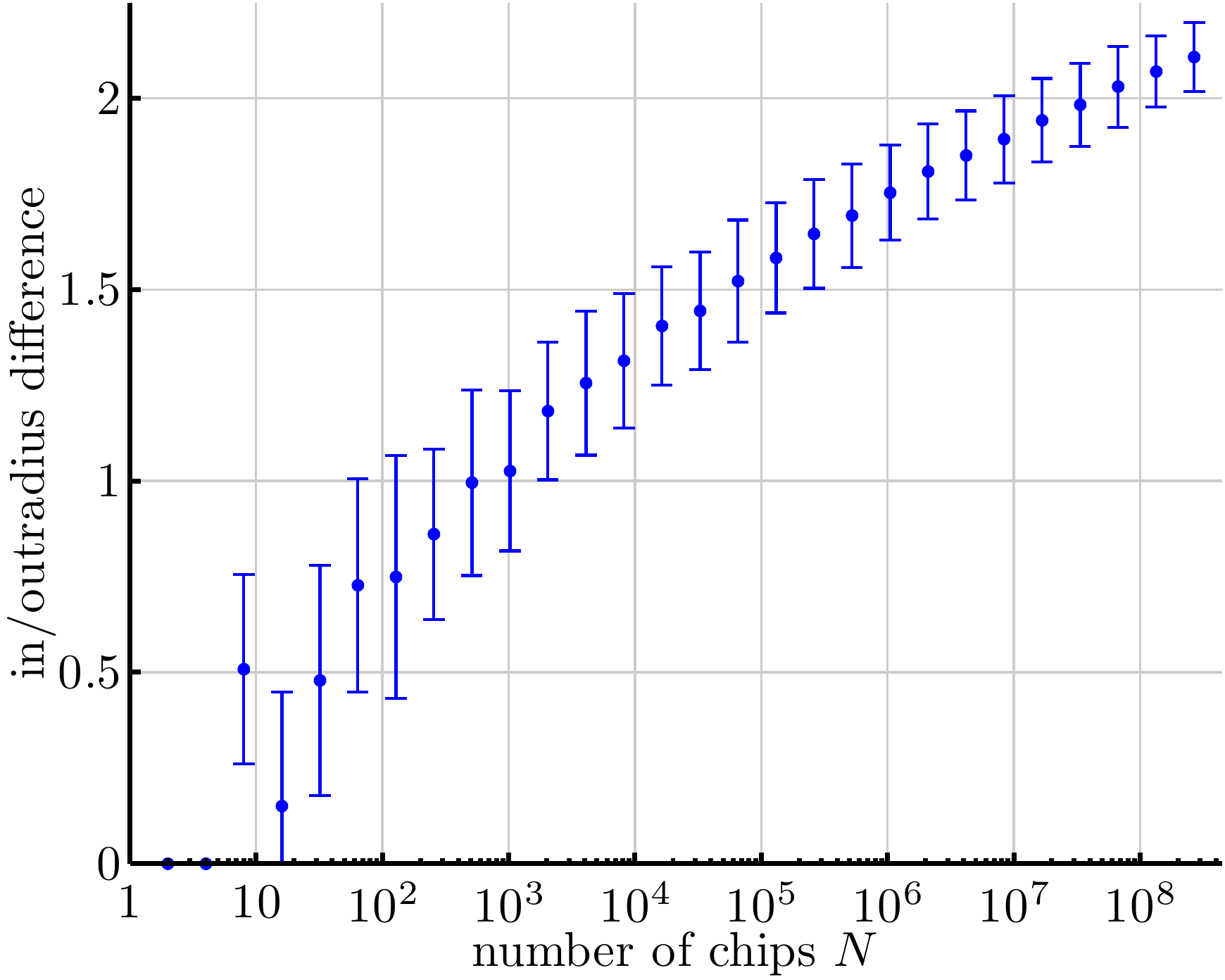}
    }
    \caption{Difference between inradius and outradius for different numbers of chips $N$
        for IDLA (\textsection\ref{sec:exp:IDLA}) and
        the low discrepancy random stack model (\textsection\ref{sec:exp:rndstack}).  Dots indicate means and error bars indicate standard deviations of the random variable $\diff(N)$ over many independent trials.  The respective data can be found in \tabrefs{IDLA}{rndstack}.
        }
%    \vspace*{-\baselineskip}
    \label{fig:radiusdiff2}
\end{figure}

An interesting question is whether the runtime could be reduced further by starting from a random odometer approximation~$\widetilde{u}_1$ instead of the deterministic approximation~$u_1$.  One approach is to draw binomials as above (taking $\lambda=0$), and use them to define a ``warped'' Laplacian operator $\widetilde{\Delta}$, given by
	\[ \widetilde{\Delta} f (x) = \sum_{y \sim x} \frac{B_{yx}}{B_y} f(y) - f(x). \]
Here $B_y = u_1(y)$, and~$B_{yx} = R((y,x),u_1(y))$ is the binomial associated to the directed edge $(y,x)$.  We then take~$\widetilde{u}_1$ to be the solution to the variational problem \eqref{variational}, with~$\Delta$ replaced by~$\widetilde{\Delta}$.  This problem can be formulated as a linear program: minimize $\sum_x \widetilde{u}_1(x)$ subject to the constraints~$\widetilde{u}_1 \geq 0$ and $\widetilde{\Delta} \widetilde{u}_1 \leq 1 - N\delta_o$.  One could even iterate this construction, using~$\widetilde{u}_1$ to draw new binomials and get a new warping~$\widetilde{\widetilde{\Delta}}$ and a new approximation~$\widetilde{\widetilde{u}}_1$.  A small number of iterations should suffice to bring the approximation very close to the true odometer.  The main computational issue is how to quickly solve (or even approximately solve) these linear programs, which are sparse but quite large: the number of variables is about $N$.  We achieved some modest speedup with this kind of approach, but not enough to justify the additional complexity.

To measure the circularity of the IDLA cluster, we computed the complex moments
	\[ M_m(A_N) = \sum_{z \in A_N} \left( \frac{z}{r} \right)^m \]
for $m=1,\ldots,100$.  Here $r=\sqrt{N/\pi}$, and we view $z\in A_N$ as a point in the complex plane by identifying $\Z^2$ with $\Z + i\Z$.  These moments obey a central limit theorem~\cite{JLS2}: $M_m(A_N)/\sqrt{N}$ converges in distribution as $N \to \infty$ to a complex Gaussian with variance $1/(m+1)$.
%	\[ \frac{M_k(A_N)}{N^{1/2}} \convergesindist \C \mathrm{N}\left(0,\frac{1}{k+1}\right). \]
%That is, $M_k(A_N)$ is asymptotically a complex Gaussian with variance $N/(k+1)$.
The distribution of the real part of $M_m(A_N)/\sqrt{N}$ is shown in \figref{moments}.

The expected value of the difference $\diff(N)$ between outradius and inradius grows logarithmically in~$N$:
the data in the third column of \tabref{IDLA}, graphed in \figref{radiusdiff2}(a), fit to
\[
    \EE\, \diff(N) = 0.528 \ln(N) - 0.457
    % cf. exp/cluster/IDLAstat.csv and =LINEST(C1:C16;A1:A16;;TRUE) Ctrl-Shift-Enter
\]
with a coefficient of determination of $R^2 = 0.99994$.  Error bars in figure~\figref{radiusdiff2}(a) show standard deviations of the random variable $\diff(N)$.  

Since more than one reader has remarked to us that the straight line fit in \figref{radiusdiff2}(a) looks ``too good to be true,'' we comment briefly on why we believe it comes out this way.  The random variable $\diff(N)$ measures the \emph{largest} fluctuation of $A_N$ from circularity (over all directions).  Very roughly speaking, since we believe the fluctuations in different directions are close to indpendent, $\diff(N)$ behaves like the maximum of many indpendent random variables, which is highly concentrated.  
Note that the size of the standard deviation, represented by the error bars in \figref{radiusdiff2}(a), is approximately constant: it does not grow with $N$.  This finding is consistent with the connection with Gaussian free field revealed in~\cite{JLS2}.  Indeed, if $M_N$ is the maximum of the discrete two-dimensional Gaussian free field in an $N\times N$ box, then the mean $\EE M_N$ has order $\log N$, and the sequence of random variables $\{M_N - \EE M_N\}_{N \geq 1}$ is tight~\cite{BZ}.  Therefore it is natural to believe (although still unproved) that the variance of $M_N$ has order $1$, and that it remains order $1$ if the maximum is taken over the boundary of a discrete ball instead of a box.

\subsection{Low-Discrepancy Random Stack}
\label{sec:exp:rndstack}

%% suggested by Jim in email on 27.07.2009

\begin{table*}[tb]
\begin{center}
\scalebox{0.71}{
\begin{tabular}{r@{=}lr@{ }lr@{$\pm$}lr@{$\pm$}lr@{$\pm$}lr}
\hline
\hline
\addlinespace[1mm]
\multicolumn{2}{c}{\bf Number of } &
\multicolumn{2}{c}{\bf Average} &
\multicolumn{2}{c}{\bf Radius} &
\multicolumn{2}{c}{\multirow{2}{*}{\bf $\bm{\|u_1-u\|_1/N}$}} &
\multicolumn{2}{c}{\multirow{2}{*}{\bf $\bm{\max |u_1-u|}$}} &
\multicolumn{1}{c}{\bf Number}
\\
\multicolumn{2}{c}{\bf chips $\bm N$} &
\multicolumn{2}{c}{\bf Runtime} &
\multicolumn{2}{c}{\bf Difference} &
\multicolumn{2}{c}{} &
\multicolumn{2}{c}{} &
\multicolumn{1}{c}{\bf of runs}
\\
\addlinespace[1mm]
\hline
\hline
\addlinespace[1mm]

$2^{10}$ &       1,024 & 3.16 & ms    & 1.026 & 0.209 &  1.34 & 0.16 &    6.00 &   0.90 & $5 \cdot 10^{5}$ \\ 
$2^{11}$ &       2,048 & 6.21 & ms    & 1.183 & 0.180 &  1.47 & 0.17 &    6.83 &   0.94 & $5 \cdot 10^{5}$ \\ 
$2^{12}$ &       4,096 & 12.0 & ms    & 1.256 & 0.188 &  1.60 & 0.18 &    7.65 &   1.00 & $5 \cdot 10^{5}$ \\ 
$2^{13}$ &       8,192 & 23.9 & ms    & 1.314 & 0.176 &  1.73 & 0.19 &    8.52 &   1.04 & $5 \cdot 10^{5}$ \\ 
$2^{14}$ &      16,384 & 49.7 & ms    & 1.405 & 0.154 &  1.86 & 0.20 &    9.40 &   1.07 & $5 \cdot 10^{5}$ \\ 
$2^{15}$ &      32,768 & 0.10 & sec   & 1.444 & 0.154 &  1.99 & 0.21 &    10.3 &    1.1 & $5 \cdot 10^{5}$ \\ 
$2^{16}$ &      65,536 & 0.21 & sec   & 1.522 & 0.160 &  2.11 & 0.22 &    11.2 &    1.2 & $5 \cdot 10^{5}$ \\ 
$2^{17}$ &     131,072 & 0.45 & sec   & 1.583 & 0.144 &  2.23 & 0.23 &    12.2 &    1.2 & $5 \cdot 10^{5}$ \\ 
$2^{18}$ &     262,144 & 0.93 & sec   & 1.646 & 0.142 &  2.35 & 0.24 &    13.2 &    1.2 & $5 \cdot 10^{5}$ \\ 
$2^{19}$ &     524,288 & 1.90 & sec   & 1.694 & 0.135 &  2.46 & 0.24 &    14.1 &    1.3 & $5 \cdot 10^{5}$ \\ 
$2^{20}$ &   1,048,576 & 3.88 & sec   & 1.753 & 0.124 &  2.59 & 0.26 &    15.1 &    1.3 & $5 \cdot 10^{5}$ \\ 
$2^{21}$ &   2,097,152 & 7.96 & sec   & 1.808 & 0.124 &  2.73 & 0.28 &    16.2 &    1.4 & $5 \cdot 10^{4}$ \\ 
$2^{22}$ &   4,194,304 & 0.27 & min   & 1.850 & 0.117 &  2.86 & 0.29 &    17.3 &    1.4 & $5 \cdot 10^{4}$ \\ 
$2^{23}$ &   8,388,608 & 0.55 & min   & 1.893 & 0.114 &  2.98 & 0.30 &    18.4 &    1.4 & $5 \cdot 10^{4}$ \\ 
$2^{24}$ &  16,777,216 & 1.13 & min   & 1.942 & 0.109 &  3.11 & 0.31 &    19.4 &    1.5 & $5 \cdot 10^{3}$ \\ 
$2^{25}$ &  33,554,432 & 2.32 & min   & 1.983 & 0.109 &  3.25 & 0.33 &    20.6 &    1.5 & $5 \cdot 10^{3}$ \\ 
$2^{26}$ &  67,108,864 & 4.74 & min   & 2.030 & 0.106 &  3.35 & 0.32 &    21.6 &    1.5 & $5 \cdot 10^{3}$ \\ 
$2^{27}$ & 134,217,728 & 9.72 & min   & 2.070 & 0.093 &  3.51 & 0.35 &    22.9 &    1.5 & $5 \cdot 10^{2}$ \\ 
$2^{28}$ & 268,435,456 & 0.33 & hours & 2.108 & 0.091 &  3.61 & 0.36 &    24.1 &    1.6 & $5 \cdot 10^{2}$ \\

\hline \hline
\end{tabular}}
\end{center}
\caption{Simulation results for low-discrepancy random stack.
    The given runtime is the total runtime of the calculation of one cluster of the given size on one core of an AMD Opteron processor 8222.
    The next column shows the difference between the outradius and inradius of the occupied cluster $A_N$.
    The fourth and fifth columns give two measurements of the error of our odometer approximation~$u_1$,
    the total absolute error and maximum pointwise error.
    The values shown are averages and standard deviations over many independent trials;
    the last column shows the number of trials.
%    500.000, 50.000, 5.000, or 500 runs for
%    $N\leq2^{20}$,
%    $2^{21}\leq N\leq2^{23}$,
%    $2^{24}\leq N\leq2^{26}$, or
%    $2^{27}\leq N\leq2^{28}$, respectively.
}
\label{tab:rndstack}
\end{table*}

In the rotor-router model (\textsection\ref{sec:exp:RR}), the neighbors are served in a maximally balanced manner, while in IDLA (\textsection\ref{sec:exp:IDLA}), the rotor stack is completely random.
Following a suggestion of James Propp, we examine a model which combines both features by using low-discrepancy random stacks.
In this model the neighbors are served in a similarly balanced manner as in the rotor-router model.  The rotor stacks consist of blocks of length~$4$, chosen independently and uniformly at random from among the~$24$ permutations of~\rotseq{NESW}.
\old{
%Earlier version:
The rotor stacks consist of blocks
\rotseq{NSEW},
\rotseq{NSWE},
\rotseq{SNEW},
\rotseq{SNWE},
\rotseq{EWNS},
\rotseq{EWSN},
\rotseq{WENS},
\rotseq{WESN} chosen independently and uniformly at random.
}
Hence the rotor stack is random, but still satisfies
$|R(e,n)-R(e',n)|\leq1$ for all $n$ and all edges $e$ and $e'$ such that $\source(e)=\source(e')$.
%This can be seen as a randomized version of the rotor-router model.

This model can be implemented with our method in the same way as IDLA.
\figref{radiusdiff2:rndstack} gives averages and standard deviations
for the radius difference $\diff(N)$ up to $N=2^{28}=268,435,456$.
In contrast to IDLA, the difference between inradius and outradius now grows slower than logarithmically in~$N$, and is not much larger than the corresponding difference for the rotor-router model.
In fact, the data points $\EE\, \diff(N)$
of \tabref{rndstack} fit to
\[
\EE\, \diff(N) = 1.018\ln\ln(N) - 0.919
\]
with a coefficient of determination of $R^2 = 0.998$.  Of course, it is very hard to distinguish empirically between slowly growing functions such as $\ln \ln (N)$ and $\sqrt{\ln(N)}$, so we cannot be sure of the exact growth rate; among several functions we tried, $\ln \ln (N)$ had the best fit.
The very slow growth of $\diff(N)$ for the low-discrepancy random stack suggests that the extremely good circularity of the rotor-router model is mainly due to its low discrepancy rather than its deterministic nature.

%%%%%%%%%%%%%%%%%%%%%%%%%%%%%%%%%%%%%%%%%%%%%%%%%%%%%%%%%%%%%%%%%%%%%%%%%%%%
%%%%%%%%%%%%%%%%%%%%%%%%%%%%%%%%%%%%%%%%%%%%%%%%%%%%%%%%%%%%%%%%%%%%%%%%%%%%

\section{Further Directions}

We have proved that the abelian stack model can be predicted exactly by correcting an initial approximation.  In experiments, we found that the correction step is quite fast if a good initial approximation is available.  It would be interesting to investigate what other classes of cellular automata can be predicted quickly and exactly by correcting an initial approximation.

The abelian sandpile model in $\Z^2$ produces beautiful examples of pattern formation that remain far from understood \cite{DSC,FLP}.   Using Lemma 2.3 of~\cite{FLP}, it should be possible to characterize the sandpile odometer function in a manner similar to \thmref{odomcharacterization}.  In this characterization, the recurrent sandpile configurations play a role analogous to the acyclic rotor configuration in~\thmref{odomcharacterization}.  The remaining challenge would then be to find a good approximation to the sandpile odometer function.

\section*{Acknowledgments}

We thank James Propp for many enlightening conversations and for comments on several drafts of this article.  We thank David Wilson for suggesting the AES random number generator and for tips about how to use it effectively.

%\section{What we don't do, but maybe later}
%
%\begin{itemize}
%\item Kapri-Dhar (see Lionels mail from Aug 19)
%\item rnd initial
%\item LP solution
%\item Gaussian free field (most likely nothing to see)
%\item sandpile (see Jim's mail from Dec 03)
%\end{itemize}

\bibliographystyle{abbrvnat}
\bibliography{fast-simulation}

\begin{thebibliography}{36}
\providecommand{\natexlab}[1]{#1}
\providecommand{\url}[1]{\texttt{#1}}
\expandafter\ifx\csname urlstyle\endcsname\relax
  \providecommand{\doi}[1]{doi: #1}\else
  \providecommand{\doi}{doi: \begingroup \urlstyle{rm}\Url}\fi

\bibitem[hug()]{hugerotor}
available at \url{http://rotor-router.mpi-inf.mpg.de}.

\bibitem[Asselah and Gaudilli\`{e}re(2010{\natexlab{a}})]{AG1}
A.~Asselah and A.~Gaudilli\`{e}re.
\newblock From logarithmic to subdiffusive polynomial fluctuations for internal
  {DLA} and related growth models, 2010{\natexlab{a}}.
\newblock \arxiv{1009.2838}.

\bibitem[Asselah and Gaudilli\`{e}re(2010{\natexlab{b}})]{AG2}
A.~Asselah and A.~Gaudilli\`{e}re.
\newblock Sub-logarithmic fluctuations for internal {DLA}, 2010{\natexlab{b}}.
\newblock \arxiv{1011.4592}.

\bibitem[Bak et~al.(1987)Bak, Tang, and Wiesenfeld]{BTW}
P.~Bak, C.~Tang, and K.~Wiesenfeld.
\newblock Self-organized criticality: an explanation of the $1/f$ noise.
\newblock \emph{Phys.\ Rev.\ Lett.}, 59\penalty0 (4):\penalty0 381--384, 1987.

\bibitem[Barve et~al.(1997)Barve, Grove, and Vitter]{BarveGV97}
R.~D. Barve, E.~F. Grove, and J.~S. Vitter.
\newblock Simple randomized mergesort on parallel disks.
\newblock \emph{Parallel Computing}, 23\penalty0 (4-5):\penalty0 601--631,
  1997.

\bibitem[Bond and Levine(2011)]{Lev11}
B.~Bond and L.~Levine.
\newblock Abelian networks {I}. {Foundations} and examples, 2011.
\newblock \url{www.math.cornell.edu/~levine/abelian-networks-I.pdf}.

\bibitem[Bramson and Zeitouni(2012)]{BZ}
M.~Bramson and O.~Zeitouni.
\newblock Tightness of the recentered maximum of the two-dimensional discrete
  {G}aussian free field.
\newblock \emph{Comm. Pure Appl. Math.}, 65:\penalty0 1--20, 2012.

\bibitem[Cooper and Spencer(2006)]{CooperSpencer}
J.~Cooper and J.~Spencer.
\newblock Simulating a random walk with constant error.
\newblock \emph{Combinatorics, Probability \& Computing}, 15:\penalty0
  815--822, 2006.

\bibitem[Cooper et~al.(2007)Cooper, Doerr, Spencer, and Tardos]{CooperDST07}
J.~Cooper, B.~Doerr, J.~Spencer, and G.~Tardos.
\newblock Deterministic random walks on the integers.
\newblock \emph{European Journal of Combinatorics}, 28:\penalty0 2072--2090,
  2007.

\bibitem[Cooper et~al.(2010)Cooper, Doerr, Friedrich, and Spencer]{CooperDFS10}
J.~Cooper, B.~Doerr, T.~Friedrich, and J.~Spencer.
\newblock Deterministic random walks on regular trees.
\newblock \emph{Random Structures and Algorithms}, 37\penalty0 (3):\penalty0
  353--366, 2010.

\bibitem[Dhar(2006)]{DharADP}
D.~Dhar.
\newblock Theoretical studies of self-organized criticality.
\newblock \emph{Physica A}, 369:\penalty0 29--70, 2006.

\bibitem[Dhar et~al.(2009)Dhar, Sadhu, and Chandra]{DSC}
D.~Dhar, T.~Sadhu, and S.~Chandra.
\newblock Pattern formation in growing sandpiles.
\newblock \emph{Europhys. Lett.}, 85, 2009.

\bibitem[Diaconis and Fulton(1991)]{DF}
P.~Diaconis and W.~Fulton.
\newblock A growth model, a game, an algebra, {L}agrange inversion, and
  characteristic classes.
\newblock \emph{Rend.\ Sem.\ Mat.\ Univ.\ Politec.\ Torino}, 49\penalty0
  (1):\penalty0 95--119, 1991.

\bibitem[Doerr and Friedrich(2009)]{DoerrF09}
B.~Doerr and T.~Friedrich.
\newblock Deterministic random walks on the two-dimensional grid.
\newblock \emph{Combinatorics, Probability {\&} Computing}, 18\penalty0
  (1-2):\penalty0 123--144, 2009.

\bibitem[Doerr et~al.(2008)Doerr, Friedrich, and Sauerwald]{DFS08}
B.~Doerr, T.~Friedrich, and T.~Sauerwald.
\newblock Quasirandom rumor spreading.
\newblock In \emph{19th Annual ACM-SIAM Symposium on Discrete Algorithms
  (SODA'08)}, pages 773--781, 2008.

\bibitem[Doerr et~al.(2009)Doerr, Friedrich, and Sauerwald]{DFS09}
B.~Doerr, T.~Friedrich, and T.~Sauerwald.
\newblock Quasirandom rumor spreading: Expanders, push vs.\ pull, and
  robustness.
\newblock In \emph{36th Int.\ Colloquium on Automata, Languages and Programming
  (ICALP'09)}, pages 366--377, 2009.

\bibitem[Fey et~al.(2010)Fey, Levine, and Peres]{FLP}
A.~Fey, L.~Levine, and Y.~Peres.
\newblock Growth rates and explosions in sandpiles.
\newblock \emph{J. Stat.\ Phys.}, 138:\penalty0 143--159, 2010.

\bibitem[Friedrich and Sauerwald(2010)]{EJC1}
T.~Friedrich and T.~Sauerwald.
\newblock The cover time of deterministic random walks.
\newblock \emph{Electr. J. Comb.}, 17\penalty0 (1):\penalty0 1--7, 2010.
\newblock R167.

\bibitem[Friedrich et~al.(2010)Friedrich, Gairing, and Sauerwald]{FGS10}
T.~Friedrich, M.~Gairing, and T.~Sauerwald.
\newblock Quasirandom load balancing.
\newblock In \emph{21st Annual ACM-SIAM Symposium on Discrete Algorithms
  (SODA'10)}, pages 1620--1629, 2010.

\bibitem[Fukai and Uchiyama(1996)]{FU}
Y.~Fukai and K.~Uchiyama.
\newblock Potential kernel for two-dimensional random walk.
\newblock \emph{Ann.\ Probab.}, 24\penalty0 (4):\penalty0 1979--1992, 1996.

\bibitem[Hellekalek and Wegenkittl(2003)]{AES}
P.~Hellekalek and S.~Wegenkittl.
\newblock Empirical evidence concerning {AES}.
\newblock \emph{ACM Trans. Model. Comput. Simul.}, 13\penalty0 (4):\penalty0
  322--333, 2003.

\bibitem[Holroyd and Propp(2010)]{HP}
A.~E. Holroyd and J.~G. Propp.
\newblock Rotor walks and {Markov} chains.
\newblock \emph{Algorithmic Probability and Combinatorics}, 520:\penalty0
  105--126, 2010.

\bibitem[Jerison et~al.(2011)Jerison, Levine, and Sheffield]{JLS2}
D.~Jerison, L.~Levine, and S.~Sheffield.
\newblock Internal {DLA} and the {G}aussian free field, 2011.
\newblock \arxiv{1101.0596}.

\bibitem[Jerison et~al.(2012)Jerison, Levine, and Sheffield]{JLS1}
D.~Jerison, L.~Levine, and S.~Sheffield.
\newblock Logarithmic fluctuations for internal {DLA}.
\newblock \emph{J. Amer. Math. Soc.}, 25:\penalty0 271--301, 2012.

\bibitem[Kleber(2005)]{Kleber}
M.~Kleber.
\newblock Goldbug variations.
\newblock \emph{Math. Intelligencer}, 27\penalty0 (1):\penalty0 55--63, 2005.

\bibitem[Kozma and Schreiber(2004)]{KS}
G.~Kozma and E.~Schreiber.
\newblock An asymptotic expansion for the discrete harmonic potential.
\newblock \emph{Electron. J. Probab.}, 9\penalty0 (1):\penalty0 1--17, 2004.

\bibitem[Lawler et~al.(1992)Lawler, Bramson, and Griffeath]{LBG}
G.~F. Lawler, M.~Bramson, and D.~Griffeath.
\newblock Internal diffusion limited aggregation.
\newblock \emph{Ann.\ Probab.}, 20\penalty0 (4):\penalty0 2117--2140, 1992.

\bibitem[Levine and Peres(2008)]{LP08}
L.~Levine and Y.~Peres.
\newblock Spherical asymptotics for the rotor-router model in $\mathbb{Z}^d$.
\newblock \emph{Indiana Univ.\ Math.\ J.}, 57\penalty0 (1):\penalty0 431--450,
  2008.

\bibitem[Levine and Peres(2009)]{LP09a}
L.~Levine and Y.~Peres.
\newblock Strong spherical asymptotics for rotor-router aggregation and the
  divisible sandpile.
\newblock \emph{Potential Anal.}, 30:\penalty0 1--27, 2009.

\bibitem[Levine and Peres(2010)]{LP09b}
L.~Levine and Y.~Peres.
\newblock Scaling limits for internal aggregation models with multiple sources.
\newblock \emph{J.\ d'Analyse Math.}, 111:\penalty0 151--219, 2010.

\bibitem[McCrea and Whipple(1940)]{MW}
W.~H. McCrea and F.~J. Whipple.
\newblock Random paths in two and three dimensions.
\newblock \emph{Proc.\ Royal Soc.}, 60:\penalty0 281--298, 1940.

\bibitem[Meakin and Deutch(1986)]{MD}
P.~Meakin and J.~M. Deutch.
\newblock The formation of surfaces by diffusion-limited annihilation.
\newblock \emph{J. Chem.\ Phys.}, 85, 1986.

\bibitem[Moore and Machta(2000)]{MM}
C.~Moore and J.~Machta.
\newblock Internal diffusion-limited aggregation: parallel algorithms and
  complexity.
\newblock \emph{J. Stat.\ Phys.}, 99\penalty0 (3-4):\penalty0 661--690, 2000.

\bibitem[Propp and Wilson(1998)]{PW}
J.~G. Propp and D.~B. Wilson.
\newblock How to get a perfectly random sample from a generic {Markov} chain
  and generate a random spanning tree of a directed graph.
\newblock \emph{J.\ Algorithms.}, 27:\penalty0 170--217, 1998.

\bibitem[Wagner et~al.(1996)Wagner, Lindenbaum, and Bruckstein]{WLB96}
I.~A. Wagner, M.~Lindenbaum, and A.~M. Bruckstein.
\newblock Smell as a computational resource -- a lesson we can learn from the
  ant.
\newblock In \emph{4th Israel Symposium on Theory of Computing and Systems
  (ISTCS '96)}, pages 219--230, 1996.

\bibitem[Wilson(1996)]{Wilson}
D.~B. Wilson.
\newblock Generating random spanning trees more quickly than the cover time.
\newblock In \emph{28th {A}nnual {ACM} {S}ymposium on the {T}heory of
  {C}omputing (STOC '96)}, pages 296--303, 1996.

\end{thebibliography}

\end{document}